\def\dint{\displaystyle\int}
\def\dsum{\displaystyle\sum}
\def\dprod{\displaystyle\prod}
\def\t{\mathbf{t}}
\def\x{\mathbf{x}}
\def\M{\bar{\mathcal{M}}}
\def\H{\mathcal{H}}
\def\<{\left<}
\def\>{\right>}
\def\S{\mathbf{\mathcal{S}}}
\newtheorem{thm}{Theorem}
\newtheorem{lemma}{Lemma}[section]
\newtheorem{lemma*}{Lemma}
\newtheorem{prop}{Proposition}[section]
\newtheorem{cor}{Corollary}[section]
\newenvironment{customthm}[1]
  {\innercustomthm}
  {\endinnercustomthm}
\begin{document}

\title{Two reconstruction theorems in permutation equivariant quantum K-theory}
\author{Dun Tang}

\begin{abstract}
In this paper, we first generalize the K-theoretic Ancestor-Descendant (AD) correspondence in \cite{perm7} to allow arbitrary permutative inputs.
With this version of AD correspondence, we reconstruct K-theoretical descendant $g=0$ invariants, and $g=1$ invariants with point target space, from $1$-point invariants of the corresponding genus.
In the appendix, we show that the graph of big $\mathcal{J}$ function also forms a Lagrangian cone in the permutation equivariant setting.
\end{abstract}

\maketitle

\tableofcontents

\newpage

\section*{Introduction}
\

The motivation of this paper is to reconstruct higher genus K-theoretical Gromov-Witten invariants.
The Quantum Hirzebruch-Riemann-Roch theorem \cite{perm9} for invariants of $g \geq 2$ involves in $permutation \ equivariant$ invariants rather than non-permutable ones \cite{nonperm}, even for non-permutable invariants of the point target space.
A useful tool for computing Gromov-Witten invariants is the Ancestor-Descendant correspondence.
In this paper, we develop the `completely permutable' Ancestor-Descendant correspondence (as a generalization of that in \cite{perm7}) and exploit some of its consequences in $g=0$ and $g=1$.

\subsection{Definitions of invariants}
\

We first recall the definition of correlators in permutation equivariant quantum K-theory.
Let $X$ be a compact Kahler manifold.
Let $\Lambda$ be a local $\lambda$-algebra (with Adams' operators $\Psi^k$) that contains Novikov's variables.
Let $K=K^0(X) \otimes \Lambda$.
The permutation equivariant K-theoretic Gromov-Witten invariants take in an input $\mathbf{t}_r \in \mathcal{K}_+=K[q^\pm]$ for each $r \in \mathbb{Z}_+$, and take values in $\Lambda$.

We define these invariants as follows \cite{perm9}.
The moduli space $\M_{g,n}(X,d)$ of degree $d$ stable maps from genus $g$ nodal curves with $n$ marked points to $X$ carries a virtual structural sheaf $\mathcal{O}^{vir}_{g,n,d}$ defined by Lee \cite{Lee01}.
The virtual structural sheaf $\mathcal{O}^{vir}_{g,n,d}$ is invariant under the $S_n$ action on $\M_{g,n}(X,d)$ given by renumbering marked points.
At the $i^{th}$ marked point there is the universal cotangent bundle $L_i$, and evaluation map $ev_i: \M_{g,n}(X,d) \to X$.
Given a permutation $h \in S_n$ with $\ell_r$ $r$-cycles, and inputs $\mathbf{t}_{r,k} (q) \in K[q^\pm]$ for each $r$-cycle $k = (k_1,\cdots,k_r)$, we set $T_{r,k} = \otimes_{i=1}^r(ev^\ast_{k_i}(\mathbf{t}_{r,k}))(L_{k_i})$.
Then each $T_{r,k}$ is a virtual $h$-orbi-bundle over $\M_{g,n}(X,d)$, with $h$ acting as $k$ permuting the components in the tensor product.
We define the correlators \cite{perm7}
\[\left<\mathbf{t}_{1,1}, \cdots, \mathbf{t}_{1,\ell_1}; \cdots , \mathbf{t}_{r,k}, \cdots \right>_{g,\ell,d} = (\prod_{r=1}^n r^{-\ell_r}) str_h H^\ast(\M_{g,n}(X,d); \mathcal{O}^{vir}_{g,n,d} \otimes (\otimes_{r,k} T_{r,k})).\]
Note that the right-hand side only depends on the type $\ell=(\ell_1,\cdots,\ell_n)$ of $h$, and thus the notation $\<\cdots\>_{g,\ell,d}$ make sense.

\subsection{Ancestor-Descendant correspondence}
\

The first theorem we prove in this paper is permutation equivariant K-theoretic Ancestor-Descendant correspondence, which generalizes that in \cite{perm7} and is the main tool used in this paper.
The main ingredients in the Ancestor-Descendant correspondence are the descendant potential $\mathcal{D}$ and the ancestor potential $\mathcal{A}$, living respectively in Fock spaces associated to certain symplectic spaces $\mathcal{K}^\infty, \bar{\mathcal{K}}^\infty$; and a symplectic linear map $\S:\mathcal{K}^\infty\to \bar{\mathcal{K}}^\infty$.
The definitions of $\mathcal{K}^\infty, \bar{\mathcal{K}}^\infty, \S$ are in Sections 1.4, 2.1.

We now construct $\mathcal{D, A}$.
Given a function $\mathcal{F}$ with inputs $\tau=(\tau_1,\tau_2,\cdots) \in K^\infty$, define $R_r$ by shifting inputs $\tau$'s (unless otherwise mentioned)
\[(R_r\mathcal{F})(\tau_1, \cdots, \tau_k,\cdots) = \mathcal{F}(\tau_r, \cdots, \tau_{kr},\cdots).\]
Let $v=\bar{v}=(1-q,\cdots,1-q,\cdots)$.

Given $\mathbf{t}=(\mathbf{t}_1, \cdots,\mathbf{t}_k, \cdots)$ with each $\t_k \in K[q^\pm]$, define generating functions 
\[\begin{array}{ll}
\mathcal{F}_g(\mathbf{t}) &= \dsum_{\ell,d}\frac{Q^d}{\ell!}\left<\mathbf{t}_1(L), \cdots, \mathbf{t}_1(L); \cdots , \mathbf{t}_r(L), \cdots \right>_{g,\ell,d},\\
\mathcal{D}(\mathbf{t}+v) &= \exp \dsum_{g \geq 0, k>0} \hbar^{k(g-1)}\frac{\Psi^k}k(R_k\mathcal{F}_g)(\t).
\end{array}\]
In the definition of $\mathcal{D}$, $R_k$'s shift inputs $\t_r$'s.

Given $\bar{\mathbf{t}}=(\bar{\mathbf{t}}_1, \cdots,\bar{\mathbf{t}}_k, \cdots)$ with each $\bar{\t}_k \in K[q^\pm]$, define generating functions 
\[\begin{array}{ll}
\bar{\mathcal{F}}_g(\bar{\mathbf{t}}) &= \dsum_{d,\ell,\ell_\tau} \frac{Q^d}{\ell!\ell_\tau!}\left<\bar{\mathbf{t}}_1(\bar{L}), \cdots, \bar{\mathbf{t}}_1(\bar{L}), \tau_1,\cdots,\tau_1;\cdots, \bar{\mathbf{t}}_r(\bar{L}), \cdots, \tau_r,\cdots \right>_{g,\ell+\bar{\ell},d} \\ 
\mathcal{A}(\bar{\mathbf{t}}+\bar{v}) &= \exp \dsum_{g \geq 0, k>0} \hbar^{k(g-1)}\frac{\Psi^k}k(R_k\bar{\mathcal{F}}_g)(\bar{\t}),
\end{array}\]
where $\bar{L}_i$ is the pull-back of the corresponding universal cotangent bundle along the forgetful map that forgets marked points with input $\tau$'s and the map to $X$.
Here in the definition of $\mathcal{A}$, $R_k$'s shift inputs $\bar{\t}_r$'s and $\tau_r$'s.

Given inputs $A_{1,1},\cdots, A_{1,\ell_1}, \cdots, A_{r,k}, \cdots \in K[L^{\pm}]$, with each $A_{r,k}$ as an input for an $r$-cycles, define
\[\begin{array}{ll}
&\left<\!\left<A_{1,1},\cdots, A_{1,\ell_1}; \cdots, A_{r,k}, \cdots \right>\!\right>_{g,\ell} \\
= &\dsum_{\bar{\ell},d} \frac{Q^d}{\bar{\ell}!} \cdot (\prod_r{r^{\ell_r}}) \left<A_{1,1}, \cdots, \tau_1,\cdots;\cdots, A_{r,k} \cdots, \tau_r, \cdots\right>_{g,\ell+\bar{\ell},d},\\
\end{array}\]
where $\ell,\bar{\ell}$ remembers the number of inputs $A$'s and $\tau$'s for each cycle length.

\begin{thm}
\[\mathcal{D}(\t)=\exp \left[\sum_k\frac{\Psi^k}k R_k (\left<\!\left< \ \right>\!\right>_{1,0} + \left<\!\left<\t_{2k}-\tau_{2k}+q-1\right>\!\right>_{0,1_2}/\hbar)\right] \cdot (\hat{\S}^{-1} \mathcal{A})(\t),\] where $R_k$'s shift $\tau$'s but not $\t_{2k}-\tau_{2k}+q-1$'s.
\end{thm}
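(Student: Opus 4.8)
\medskip
\noindent\textbf{Proof plan.}
The plan is to follow the strategy behind the Ancestor--Descendant correspondence in \cite{perm7}: reduce everything to a geometric comparison, on moduli of stable maps, between descendant and ancestor cotangent lines, together with the compatibility of that comparison with the universal recipe $\exp\sum_{g,k}\hbar^{k(g-1)}(\Psi^k/k)R_k$ that produces $\mathcal{D},\mathcal{A}$ from the potentials $\mathcal{F}_g,\bar{\mathcal{F}}_g$. For the geometric input, fix $g$ and consider the forgetful map $\pi\colon\M_{g,n+m}(X,d)\to\M_{g,m}(X,d)$ that drops the $m$ marked points carrying $\tau$-insertions (stabilizing and forgetting the map to $X$), which exists when $2g-2+m>0$. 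At a retained marked point $x_i$, the descendant line $L_i$ and the ancestor line $\bar L_i$ (the $\pi$-pullback of the corresponding line on $\M_{g,m}$) differ by a twist supported on the boundary divisors along which the rational component carrying $x_i$ is contracted by $\pi$; in K-theory this yields a multiplicative identity writing $(1-qL_i)^{-1}$ as $(1-q\bar L_i)^{-1}$ plus a geometric series of boundary corrections, each a pushforward from a product of smaller moduli spaces. Summation of these corrections over all retained points and strata is, by construction, the content of the symplectic operator $\S$ of Section~2.1, built from the genus-$0$ two-point K-theoretic functions.

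Inserting this identity at every marked point of $\mathcal{F}_g(\t)$ and re-summing, I would recognize the outcome as the composition of a translation of the input by the genus-$0$ one-point ($\mathcal{J}$-type) series --- precisely the $\tau$-dependence already built into $\bar{\mathcal{F}}_g$ --- with the linear symplectic change of coordinates $\S$. Because $\Psi^k$ and $R_k$ act on distinct levels independently, the comparison propagates cycle by cycle: an $r$-cycle contributes its own copy of the identity, Adams-twisted and relabelled by the reparametrizations $R_j$. Applying the recipe $\exp\sum_{g,k}\hbar^{k(g-1)}(\Psi^k/k)R_k$ to both $\mathcal{F}_g$ and $\bar{\mathcal{F}}_g$ then converts the linear $\S$ into its Fock-space quantization $\hat{\S}$ (the quantization of the associated quadratic Hamiltonian via the metaplectic/Weil representation) and the translation into the built-in $\tau$-shift. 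This is also where I would check that nothing obstructs replacing the $\t_r$ by arbitrary elements of $K[q^{\pm}]$: the boundary-correction series, hence the whole argument, is compatible term by term with such substitutions, which is exactly the ``completely permutable'' generality. The genus-$0$ statement --- that $\hat{\S}^{-1}$ intertwines the descendant and ancestor cones up to translation --- is essentially \cite{perm7} transcribed.

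The forgetful map $\pi$ is undefined precisely in the unstable range $(g,m)\in\{(0,0),(0,1),(0,2),(1,0)\}$, and these are the only sources of discrepancy between $\mathcal{D}$ and the naive $\hat{\S}^{-1}\mathcal{A}$; the cases $(0,0),(0,1)$ are absorbed into the definition of $\S$ and the translation. The quantization of $\S$ carries a normalization (cocycle) scalar, computed from the genus-$0$ two-point series; combined with the $(0,2)$ unstable contribution and the dilaton shift $q-1=-v$ it assembles into $\exp\sum_k(\Psi^k/k)R_k\big(\langle\!\langle\,\t_{2k}-\tau_{2k}+q-1\,\rangle\!\rangle_{0,1_2}/\hbar\big)$, the subscript $1_2$ recording that the unstable space $\M_{0,2}$ enters with its two points interchanged by the cycle bookkeeping, and the stated caveat that $R_k$ does not act again on $\t_{2k}-\tau_{2k}+q-1$ reflecting that this insertion already sits at level $2k$. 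The $(1,0)$ unstable case, handled in the spirit of the $\tfrac1{24}$-type genus-$1$ correction of cohomological AD, produces the remaining factor $\exp\sum_k(\Psi^k/k)R_k\langle\!\langle\ \rangle\!\rangle_{1,0}$.

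The main obstacle I anticipate is the combined bookkeeping behind the last two paragraphs: simultaneously controlling how $\Psi^k$, $R_k$, the dilaton shift $v=\bar v=(1-q,\dots)$, the normalization $\prod_r r^{-\ell_r}$ and the metaplectic cocycle interact with the forgetful-map comparison, so that the two unstable contributions land with exactly the stated coefficients \emph{and} level-assignments, with no spurious extra scalar surviving from the quantization. By contrast, the genus-$0$ cone mechanism and the overall architecture are inherited from \cite{perm7}; the genuinely new work is admitting arbitrary permutative inputs and isolating the two explicit correction series in the completely permutable normalization.
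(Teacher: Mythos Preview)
Your overall architecture is the paper's: compare $L_i$ to $\bar L_i$ via boundary strata of the forgetful map to produce $\S$, identify $\bar{\t}=[\S\t]_+$ (equivalently $\bar{\x}=[\S\x]_+$), sum over stable $(g,\ell)$ to get $\bar{\mathcal{F}}_g$, and then account for the unstable leftovers and the quantization phase. Where your plan goes wrong is the bookkeeping of those leftovers, and this is not cosmetic. In the permutation-equivariant setting the unstable $(g,\ell)$'s are \emph{five}, not four: $(1,0)$, $(0,0)$, $(0,1_1)$, $(0,2_1)$, and $(0,1_2)$. The three genus-$0$ terms with $1$-cycle inputs, namely $\langle\!\langle\,\rangle\!\rangle_{0,0}+\langle\!\langle\t_1\rangle\!\rangle_{0,1_1}+\tfrac12\langle\!\langle\t_1,\t_1\rangle\!\rangle_{0,2_1}$, are \emph{not} ``absorbed into $\S$ and the translation''; rather, a nontrivial computation using the string and dilaton equations (the paper's Lemma~2.5, built on Lemma~2.3) repackages them as $\tfrac12\langle\!\langle\x_1,\x_1\rangle\!\rangle_{0,2_1}$, which is \emph{exactly} the cocycle scalar in $\hat\S_1^{-1}\mathcal{A}(\x)=e^{\langle\!\langle\x_1,\x_1\rangle\!\rangle_{0,2_1}/2\hbar}\mathcal{A}([\S_1\x_1]_+,\x_2,\dots)$ (and its $R_k$-shifted analogues for $\hat\S_k^{-1}$). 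After that cancellation the only survivors are $\langle\!\langle\,\rangle\!\rangle_{1,0}$ and the \emph{separate} term $\langle\!\langle\t_2\rangle\!\rangle_{0,1_2}$ coming from the unstable case of one $2$-cycle; the latter is what becomes $\langle\!\langle\t_{2k}-\tau_{2k}+q-1\rangle\!\rangle_{0,1_2}$ after the $R_k$ shift. Your claim that the $1_2$ correction arises by combining the metaplectic cocycle with ``the $(0,2)$ unstable contribution'' conflates $(0,2_1)$ with $(0,1_2)$ and gets the mechanism backwards: the cocycle matches the $2_1$ data, not the $1_2$ data.

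Two smaller points. First, the ``translation by a $\mathcal{J}$-type one-point series'' is not how the paper (or the argument) runs: the shift is simply $\t\mapsto\t+\tau$ on the descendant side, and the compatibility with the dilaton vector is the separate identity $[\S_1(1-q+\tau_1)]_+=1-q$ (Lemma~2.4), which you will need to verify explicitly. Second, the step you flag as the main obstacle --- matching the unstable terms to the cocycle with the correct coefficients --- is indeed where the work lies, but it is resolved by the string/dilaton identity above, not by a direct cocycle computation; without that lemma your plan has no way to see why the three $1$-cycle unstable terms collapse to the single quadratic form that $\hat\S^{-1}$ supplies.
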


Here $\hat{\S}^{-1}$ is the quantization of the Symplectic map $\S^{-1}$, as defined in \cite{Givental01}.

\subsection{Reconstruction theorem in $g=0$}
\

Let $\phi_\alpha$ be a basis of $K^0(X)$ and $\phi^\alpha$ its dual basis under the pairing $(\phi,\psi) = \chi(X, \phi \otimes \psi)$.
Recall that the Novikov's ring $\Lambda$ is a local $\lambda$-algebra with maximal ideal $\Lambda_+$.

\begin{thm}
The genus $0$ descendant potential $\mathcal{F}_0(\t)$ is recovered from $1$-point correlators 
\[\sum_\alpha \phi^\alpha \<\!\<\dfrac{\phi_\alpha}{1-qL}\>\!\>_{0,1_1}, \sum_\alpha \phi^\alpha \<\!\<\dfrac{\phi_\alpha}{1-qL}\>\!\>_{0,1_2}.\]
Moreover, there is a finite algorithm that computes $\mathcal{F}_0$ modulo $\Lambda_+^n$ for each $n$.
\end{thm}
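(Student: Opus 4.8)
The plan is to leverage the Ancestor-Descendant correspondence (Theorem 1) to transfer the reconstruction question from the descendant potential $\mathcal{F}_0$ to the ancestor potential, where the dilaton-shifted inputs live in a fixed (finite-rank over $\Lambda$) symplectic space and the string/dilaton-type equations are available. In genus $0$, extracting the $\hbar^{-1}$-coefficient from both sides of Theorem 1 gives an identity of the form $\mathcal{F}_0(\t) = \mathcal{G}_0\bigl(\t; \text{(quadratic correction terms)}\bigr) \circ \S^{-1}$, i.e. the genus-$0$ descendant potential is obtained from the genus-$0$ ancestor potential by the \emph{classical} symplectic transformation $\S^{-1}$ together with the explicit quadratic terms involving $\<\!\<\ \>\!\>_{0,1_2}$ that appear in the exponential prefactor. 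So the first step is to make this genus-$0$ specialization precise and observe that $\S$ itself is built out of the generating series $\sum_\alpha\phi^\alpha\<\!\<\phi_\alpha/(1-qL)\>\!\>_{0,1_1}$ and $\sum_\alpha\phi^\alpha\<\!\<\phi_\alpha/(1-qL)\>\!\>_{0,1_2}$ (this is essentially the content of Sections 1.4 and 2.1, where $\S$ is defined); hence reconstructing $\mathcal{F}_0$ reduces to reconstructing the genus-$0$ ancestor potential $\bar{\mathcal{F}}_0$.

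The second step is the reconstruction of $\bar{\mathcal{F}}_0$ from the same $1$-point data. Here I would use the geometry of the ancestor cotangent lines $\bar L_i$ together with the permutation-equivariant genus-$0$ topological recursion relations: on $\bar{\mathcal{M}}_{0,n}$ the class $1 - \bar L_i$ for $n \geq 3$ can be expressed via boundary divisors, which translates (after applying $str_h$ against the virtual sheaf) into relations expressing a multi-point ancestor correlator with a nontrivial $\bar L$-dependence in terms of products of lower-point correlators. Iterating these relations reduces any ancestor correlator to correlators with all ancestor inputs being $L$-independent constants in $K^0(X)\otimes\Lambda$ — i.e. to the ancestor potential restricted to constant inputs — which by the (permutation-equivariant) string and dilaton equations and the fundamental class / divisor axioms is in turn controlled by $1$-point invariants of the form $\<\!\<\phi_\alpha/(1-qL)\>\!\>_{0,1_r}$. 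The role of the two cycle-types $1_1$ and $1_2$ is exactly that the recursion, run in the permutation-equivariant category, mixes untwisted ($r=1$) and the simplest twisted ($r=2$) sectors through the Adams operations $\Psi^k$ and the operators $R_k$, so both $1$-point series are genuinely needed as seeds.

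The third step is to organize the above into a \emph{finite} algorithm modulo $\Lambda_+^n$. The key finiteness input is the Novikov/$\lambda$-filtration: Novikov's variables $Q$ and, more generally, the inputs $\t$ lie in $\Lambda_+$, so any fixed correlator $\<\!\<\cdots\>\!\>_{0,\ell}$ is a formal series whose terms of total $\Lambda_+$-degree $< n$ involve only: (i) bounded degree $d$ (since $Q^d \in \Lambda_+^{d}$-ish, or at least only finitely many $d$ contribute after using effectivity), (ii) boundedly many insertions of $\tau$'s and of the variables $\t_r$, and (iii) boundedly many iterations of the topological recursion (each iteration strictly drops the number of marked points or raises the $\Lambda_+$-order). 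Thus modulo $\Lambda_+^n$ the whole reconstruction is a finite sequence of substitutions: apply TRR finitely often to reduce to constant-input ancestor correlators, apply string/dilaton finitely often to reduce those to the $1$-point seeds, then apply $\hat\S^{-1}$ and the explicit quadratic prefactor from Theorem 1. I would package these bookkeeping facts as a short lemma on the $\Lambda_+$-adic convergence and then state the algorithm.

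The main obstacle I anticipate is the careful handling of the permutation-equivariant combinatorics in the recursion: the operators $R_k$ and the Adams operations $\Psi^k$ make the ``string'' and ``topological recursion'' relations into relations among \emph{whole hierarchies} of correlators indexed by cycle-type, so one must check that the recursion is genuinely triangular — that each step expresses a correlator in terms of strictly simpler ones (fewer marked points, lower $\Lambda_+$-order, or smaller cycle lengths feeding the seeds) with no circularity. A secondary technical point is verifying that $\S^{-1}$, although it involves $1/(1-qL)$-type denominators, acts on the relevant completed space in a way compatible with the $\Lambda_+$-adic truncation, so that ``$\hat\S^{-1}$ modulo $\Lambda_+^n$'' is well-defined and computable; this should follow from the structure of $\S$ recalled in Section 2.1, but it needs to be stated explicitly for the finiteness claim to be rigorous.
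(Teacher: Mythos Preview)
Your first step---extracting the genus-$0$ identity from Theorem~1 and noting that the quadratic prefactor and the operator $\S$ are built from the given $1$-point series---matches the paper. But your second step misses the paper's central idea and, as written, does not lead to the stated result.

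The paper does \emph{not} reconstruct $\bar{\mathcal{F}}_0$. Instead it exploits the freedom in the parameter $\tau$ of the AD correspondence: one constructs (by a contraction-mapping iteration $T$, built from the $2$-point correlators and hence from $\partial_{\tau_1}$ of the $1_1$-series) a specific $\tau=\tau(\t)$ such that $\bar{\t}(1)=0$; then a dimension count on $\bar{\mathcal{M}}_{0,n}$ (Lemma~3.1/Proposition~3.2) forces $\bar{\mathcal{F}}_0(\bar{\t})=0$ outright. After this vanishing, only the two explicit unstable terms survive,
\[
\mathcal{F}_0(\t)=\langle\!\langle \t_2\rangle\!\rangle_{0,1_2}+\tfrac12\langle\!\langle \mathbf{x}_1,\mathbf{x}_1\rangle\!\rangle_{0,2_1},
\]
and these are recovered from the $1_2$-series directly and from the $1_1$-series via WDVV (Proposition~1.4). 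That is precisely why only the seeds $1_1$ and $1_2$ appear. The finiteness claim then follows because $T^n(0)$ already computes $\tau$ modulo $\Lambda_+^{n}$.

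Your TRR plan, by contrast, never fixes $\tau$, and your reduction ``to constant ancestor inputs, then by string/dilaton to $1$-point'' does not go through in the permutation-equivariant setting: string and dilaton only trade $1$-cycle insertions, so correlators with $r$-cycles for $r\geq 3$ do not reduce to $\langle\!\langle\cdot\rangle\!\rangle_{0,1_r}$ with $r\in\{1,2\}$ by any mechanism you name. Your own ``main obstacle'' (triangularity of the recursion across cycle types) is exactly where the argument breaks; the paper sidesteps it entirely by making $\bar{\mathcal{F}}_0$ vanish rather than computing it.
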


\subsection{Reconstruction theorem of point target space in $g=1$}
\

Let $\bar{\x}_r=1-q+\bar{\t}_r$, $t_{r,0} = \t_r(1)$ and $\bar{t}_{2,2} = \frac12 \bar{\t}''_2(1)$.
Define
\[\begin{array}{ll}
\mathcal{F}_{1,2}^{perm} (x) = & \dfrac1{24} \<\!\<\dfrac1{1-x\bar{L}},1,1,1\>\!\>_{1,4_1} (\bar{\x}_1(x^{-1}))^4 \left(\dfrac{\partial \tau_2}{\partial t_{2,0}}\right)^2(1+\left(\dfrac{\partial \tau_2}{\partial t_{2,0}}\right)\bar{t}_{2,2}(-x^{-1}-1)); \\
\mathcal{F}_{1,3}^{perm} (x)  = & \dfrac16 \<\!\<\dfrac1{1-x\bar{L}}, 1,1\>\!\>_{1,3_1} \bar{\x}_1^3(x^{-1})\dfrac{\partial \tau_3}{\partial t_{3,0}}; \\
\mathcal{F}_{1,4}^{perm} (x) = & \dfrac14 \<\!\<\dfrac1{1-x\bar{L}}, 1, 1\>\!\>_{1,2_1+1_2} \bar{\x}^2_1(x^{-1}) \bar{\x}_2(x^{-2}) \dfrac{\partial \tau_4}{\partial t_{4,0}}; \\
\mathcal{F}_{1,6}^{perm} (x) = &\dfrac16 \<\!\<\dfrac1{1-x\bar{L}}, 1,1\>\!\>_{1,1_1+1_2+1_3} \bar{\x}_1 (x^{-1}) \bar{\x}_2(x^{-2}) \bar{\x}_3(x^{-3})\dfrac{\partial \tau_6}{\partial t_{6,0}}.
\end{array}\]
We remark that the $\ell$'s of the double-bracket correlators in $\mathcal{F}_{1, M}^{perm}$ remembers the number of ramification points (carrying $\x_r(x^{-1})$'s) of elliptic curves under the order $M$ permutation of marked points.

We pick $\tau$ so that $[\S(\t+1-q)]_+(1)=0$.
This $\tau$ is computed recursively in Section 3.3.

\begin{thm}
The genus $1$ generating function 
\[\mathcal{F}_1(\t) = F_1(\tau) + \frac1{24} \log\left(\frac{\partial \tau_1}{\partial t_{1,0}}\right) + \sum_{M=2,3,4,6} \sum_{a = 0,\infty}Res_a \mathcal{F}_{1,M}^{perm}(x) \frac{dx}x.\]
\end{thm}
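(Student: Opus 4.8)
The plan is to derive this formula from the Ancestor--Descendant correspondence (Theorem~1) combined with a Kawasaki--Riemann--Roch analysis of the genus-$1$ moduli spaces $\bar{\mathcal{M}}_{1,n}$ and their $S_n$-quotients. I would begin by specializing Theorem~1 to $X=pt$ and extracting the genus-$1$ part, i.e. the $\hbar^0$-coefficient of $\log\mathcal{D}$ reduced to its Adams-primitive component (via the standard inversion of the $\sum_k\tfrac{\Psi^k}{k}R_k$ operation, which on the prefactor side is immediate and on the $\hat{\S}^{-1}\mathcal{A}$ side uses that $\hat{\S}^{-1}$ is compatible with the $\Psi^k/R_k$ structure). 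On the left this is exactly $\mathcal{F}_1(\t)$; on the right the $\<\!\<\t_{2k}-\tau_{2k}+q-1\>\!\>_{0,1_2}/\hbar$ part of the prefactor, being of order $\hbar^{-1}$, drops out, leaving the genus-$1$ piece of $\log(\hat{\S}^{-1}\mathcal{A})$ together with $\<\!\<\ \>\!\>_{1,0}$. So the problem reduces to (a) unravelling the quantization $\hat{\S}^{-1}$ in genus $1$ and (b) computing the genus-$1$ ancestor potential $\bar{\mathcal{F}}_1$ of a point by orbifold Riemann--Roch.

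For (a) I would use Givental's quantization formula: written in block form with respect to the polarization of $\mathcal{K}^\infty$, $\hat{\S}^{-1}$ acts on $\mathcal{A}$ by substituting the $\S^{-1}$-transformed argument and by inserting the genus-$1$ anomaly, a $\tfrac12\log\det$-type term. The substitution is where the dressed inputs appear: the portion of the transformed argument obtained by running $\tau$'s through genus-$0$ vertices is $\bar{\t}_1^{fake}$, and adding the genus-$0$ two-point correction $\<\!\<\tfrac1{1-qL_1},\bar{\t}_1\>\!\>_{0,2_1}$ produces $\bar{\t}_1^{new}$. Evaluating the determinant for the point target --- it is the Jacobian of the dilaton-shifted change of variables from descendant to ancestor coordinates, $\t_1\mapsto\tau_1$ --- gives precisely $\tfrac1{24}\log(\partial\tau_1/\partial t_{1,0})$. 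The $\<\!\<\ \>\!\>_{1,0}$ term together with the remaining substitution then identify the surviving ancestor content with $F^{tw}_{1,1}(\bar{\t}_1^{new})-F^{tw}_{1,1}(\bar{\t}_1^{fake})$ plus the main-stratum and permutation contributions computed in (b).

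For (b), the technical heart, I would apply Kawasaki--Riemann--Roch to $\bar{\mathcal{M}}_{1,n}/S_n$: the holomorphic Euler characteristic is a sum over conjugacy classes $[h]\subset S_n$ and connected components of the $h$-fixed locus, each contributing a fake (Hirzebruch--Riemann--Roch) integral over that stratum, twisted by the $\hat A$-class of and the $h$-action on the virtual normal bundle. I would organize the strata by the automorphism $h$ induces on the stabilized genus-$1$ curve. The main stratum, where $h$ acts trivially on the coarse curve, contributes the fake (cohomological) genus-$1$ potential $F_1(\tau)$. The strata over which the generic elliptic involution acts --- a $\mathbb{Z}/2$-family fibered over $\bar{\mathcal{M}}_{1,1}$, together with rational tails at its two-torsion points and their degenerations --- produce, after expanding the normal-bundle weight as a residue in the cotangent-line parameter at the (up to four) ramification points, the building block $F^{tw}_{1,1}$ evaluated at a dressed input; comparing the true and fake dressings of those rational tails yields the two copies, at $\bar{\t}_1^{new}$ and $\bar{\t}_1^{fake}$, hence the difference $F^{tw}_{1,1}(\bar{\t}_1^{new})-F^{tw}_{1,1}(\bar{\t}_1^{fake})$. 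Finally, the strata on which $h$ combines a cyclic rotation of a cycle of rational components with a genuine order-$M$ automorphism $\sigma$ of the elliptic component exist exactly for $M\in\{2,3,4,6\}$ (the cyclic orders realized by $\mathrm{Aut}(E,O)$); the $h$-orbit structure on the ramification points of $E\to E/\langle\sigma\rangle$ is $4_1$, $3_1$, $2_1+1_2$, $1_1+1_2+1_3$ respectively, matching the $\ell$'s of the double-bracket correlators, and summing the Kawasaki weights --- geometric series in the cotangent lines at the ramification points, hence the $\tfrac1{1-x\bar L}$ factors and the residues at $x=0,\infty$ --- with the dilaton-shifted inputs $\bar{\x}_r(x^{-r})$ at an $r$-orbit of ramification points and the Jacobian factors $\partial\tau_M/\partial t_{M,0}$ from the $\mathbb{Z}/M$-normalization of the elliptic component, reproduces $\sum_{M}\sum_{a=0,\infty}Res_a\,\mathcal{F}_{1,M}^{perm}(x)\,\tfrac{dx}x$.

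Assembling the four contributions --- $F_1(\tau)$ from the main stratum, $\tfrac1{24}\log(\partial\tau_1/\partial t_{1,0})$ from the quantization anomaly, the $F^{tw}_{1,1}$ difference from the involution strata, and the residue sum from the cyclic-automorphism strata --- gives the claimed formula. The main obstacle is step (b): enumerating \emph{all} genus-$1$ Kawasaki strata, including the nodal configurations with cyclically permuted rational components and the interplay between marked points and ramification points, computing their normal-bundle weights and the ensuing residues, and checking that the permutation-equivariant bookkeeping --- the $r^{-\ell_r}$ normalizations, the $\Psi^k/R_k$ structure, and the exact counting of ramification points recorded in the $\ell$'s of the double brackets --- matches the definitions of $F^{tw}_{1,1}$, $\bar{\t}_1^{new}$, $\bar{\t}_1^{fake}$ and the $\mathcal{F}_{1,M}^{perm}$. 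A secondary subtlety is disentangling how the $\hat{\S}^{-1}$-conjugation interacts with the twisted sectors, in particular the cancellation that leaves exactly the difference $F^{tw}_{1,1}(\bar{\t}_1^{new})-F^{tw}_{1,1}(\bar{\t}_1^{fake})$ rather than each term separately.
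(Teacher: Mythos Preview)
Your high-level plan---reduce to $\mathcal{F}_1(\t)=F_1(\tau)+\bar{\mathcal{F}}_1(\bar{\t})$ via the Ancestor--Descendant correspondence, then evaluate $\bar{\mathcal{F}}_1$ by a Kawasaki analysis organized by the order of $h$ on the stabilized genus-$1$ curve---matches the paper's. But two of your attributions are wrong and one key lemma is missing.

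First, there is no genus-$1$ quantization anomaly for $\hat{\S}^{-1}$. The operator $\S$ is of $S$-matrix (lower-triangular) type, and as the paper records in Section~2.3 (from \cite{perm7}, Proposition~5), $\hat{\S}^{-1}\mathcal{A}(\x)=e^{\sum_k\frac{\Psi^k}{k}R_k(\<\!\<\x_k,\x_k\>\!\>_{0,2_1}/2\hbar)}\mathcal{A}([\S\x]_+)$: substitution plus a purely $\hbar^{-1}$ phase, no $\tfrac12\log\det$ cocycle. The $\tfrac1{24}\log(\partial\tau_1/\partial t_{1,0})$ does not come from quantization; it, together with the entire difference $F^{tw}_{1,1}(\bar{\t}_1^{new})-F^{tw}_{1,1}(\bar{\t}_1^{fake})$, is the $ord(h)=1$ contribution to $\bar{\mathcal{F}}_1(\bar{\t})$, i.e.\ the ordinary non-permutative ancestor potential, computed in \cite{nonperm}. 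Your assignment of the $F^{tw}$ difference to ``involution strata'' and of $F_1(\tau)$ to the ``main stratum'' of the Kawasaki decomposition is therefore misplaced: $F_1(\tau)=\<\!\<\ \>\!\>_{1,0}$ is the K-theoretic prefactor from Theorem~1, not a fake integral.

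Second, for the $ord(h)=M\in\{2,3,4,6\}$ strata you appeal directly to Kawasaki weights as geometric series in the cotangent lines, but this does not by itself reduce the infinite sum over $\ell_M$ (the number of $M$-cycles of marked points) to the closed forms $\mathcal{F}_{1,M}^{perm}$. The paper's engine here is a \emph{permutation-equivariant Dilaton equation} (Proposition~4.1): on the balanced locus $\M^M_{\ell_M}$, inserting $\bar{L}-1$ at an extra $M$-cycle multiplies the correlator by $M+M\ell_M$ (after checking that the Hodge-bundle terms $(\H^\ast+\H-2)^M$ vanish for dimension reasons). This recursion, together with Lemma~4.5 identifying $(1-\bar{t}_{r,1})^{-1}=\partial\tau_r/\partial t_{r,0}$, is what collapses the $\ell_M$-sum into the Jacobian factors $\partial\tau_M/\partial t_{M,0}$ and produces the residue expressions. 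Without this Dilaton equation your step (b) is a program, not a computation; in particular the extra factor $(1+(\partial\tau_2/\partial t_{2,0})\bar{t}_{2,2}(-x^{-1}-1))$ in $\mathcal{F}_{1,2}^{perm}$, which comes from a careful case analysis (cases $a,b,c$ in Section~4.3) using the Dilaton equation twice, has no visible source in your outline.
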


\subsection{Lagragian cone formalism}
\

We also include an appendix on Givental's Lagrangian cone formalism in general permutation equivariant quantum K-theory as a standard consequence of (various structural equations and) the Ancestor-Descendant correspondence.
Let $\mathcal{L}$ be the range of the big $\mathcal{J}$-function 
\[\mathcal{J} (\t_1) = 1-q+\t_1 + \sum_{\ell, d,\alpha} \frac{Q^d}{\ell !} \phi^\alpha \<\frac{\phi_\alpha}{1-qL}, \t_1,\cdots,\t_1;\cdots,\t_r\cdots\>_{0,1_1+\ell,d},\]
viewed as a function in $\t_1$, with parameters $\t_2,\cdots,\t_r \cdots$.
Our main theorem in the appendix is

\begin{thm}
$\mathcal{L} = \cup_\tau (1-q) \S_{1,\tau}^{-1} \mathcal{K}_+$ is an overruled Lagrangian cone, i.e. its tangent spaces $T\subset {\mathcal K}$ are invariant under multiplication by $1-q$, and ${\mathcal L}$ is ruled by $(1-q)T$ at all points of which the tangent space is the same equal to $T$.
\end{thm}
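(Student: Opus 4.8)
The plan is to deduce the statement from the genus-$0$ structural equations --- string, dilaton, and $\overline{\mathrm{WDVV}}$/topological recursion --- just as in Givental's cohomological and non-permutable K-theoretic treatments, using the genus-$0$ specialization of the Ancestor--Descendant correspondence (the first Theorem) only to recognize the ruling spaces as the $(1-q)\S_{1,\tau}^{-1}\mathcal K_+$. Once these three structural equations are recorded for the correlators $\<\!\<\cdots\>\!\>$ with arbitrary permutative inputs, the remaining manipulations are routine.

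\emph{Cone structure and tangent spaces.} The permutation-equivariant string equation yields an Euler-type identity for $\mathcal F_0$; this puts $0\in\mathcal L$ and makes $\mathcal L$ invariant under the associated rescaling, i.e.\ a cone in the sense of the statement. The dilaton equation --- inserting an extra ``$1$'' into the non-permutable slot $\t_1$ --- then shows that the tangent space $T$ to $\mathcal L$ at any point is invariant under multiplication by $1-q$. Finally the genus-$0$ $\overline{\mathrm{WDVV}}$/topological recursion relations show that $\mathcal L$ is isotropic --- hence, by the usual dimension count, a Lagrangian submanifold --- and that it is overruled: through each point the ruling space $(1-q)T$ lies in $\mathcal L$, and the tangent space stays equal to $T$ along it.

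\emph{Identification of the ruling via the AD correspondence.} It remains to recognize the ruling space through $\mathcal J(\tau)$ as $(1-q)\S_{1,\tau}^{-1}\mathcal K_+$ and to see that these exhaust $\mathcal L$. I would extract the $\hbar^{-1}$-homogeneous part of the logarithm of the AD identity: since $\hbar^{k(g-1)}\Psi^k/k$ reaches order $\hbar^{-1}$ only through $(k,g)=(1,0)$, this produces a genus-$0$ identity realizing $\mathcal L$ as the image under the symplectomorphism $\S_{1,\tau}^{-1}$ --- the $k=1$ block of the adelic map $\S$, with $\tau=\tau(\t)$ the background built into $\mathcal A$ --- of the genus-$0$ ancestor cone $\bar{\mathcal L}_\tau$, up to the genus-$0$ contribution of the prefactor. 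The ancestor string and dilaton equations identify the tangent space to $\bar{\mathcal L}_\tau$ at its dilaton-shift point $1-q$ with the standard Lagrangian $\mathcal K_+$; hence $T_{\mathcal J(\tau)}\mathcal L=\S_{1,\tau}^{-1}\mathcal K_+$, so the ruling space through $\mathcal J(\tau)$ is $(1-q)\S_{1,\tau}^{-1}\mathcal K_+$. Since each point $\mathcal J(\t)$ of $\mathcal L$ lies on the ruling labelled by its associated background $\tau(\t)$, and every background occurs, $\mathcal L$ is the union of these rulings, i.e.\ $\mathcal L=\cup_\tau(1-q)\S_{1,\tau}^{-1}\mathcal K_+$.

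\emph{Where the real work is.} Two points carry the content. First, one must establish the string, dilaton, and $\overline{\mathrm{WDVV}}$ equations for $\<\!\<\cdots\>\!\>$ with arbitrary permutative inputs: the renumbering of marked points and the pulled-back cotangent classes $\bar L_i$ obstruct the naive forgetful-map arguments, so each identity has to be set up cycle-length by cycle-length and reconciled with the $R_k$ and $\Psi^k$ twists in the definitions of $\mathcal D$ and $\mathcal A$. Second --- and this I expect to be the main obstacle --- one must control the genus-$0$ reduction of the \emph{quantized} AD identity in the adelic framework: verify that the $R_k$ rescalings and Adams twists interact with $\hat{\S}^{-1}$ so that only the single symplectomorphism $\S_{1,\tau}$, not the whole adelic $\S$, enters at order $\hbar^{-1}$, and that the prefactor (the genus-$1$, hence $\hbar^0$, term $\<\!\<\ \>\!\>_{1,0}$ together with the genus-$0$ term $\<\!\<\t_{2k}-\tau_{2k}+q-1\>\!\>_{0,1_2}/\hbar$) changes $\mathcal F_0$ only by a function whose $\t_1$-differential is already absorbed into the transported ancestor gradient, so that the graph --- and hence $\mathcal L$ --- is not moved. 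With these in hand, the cone and overruled conclusions follow formally from the steps above.
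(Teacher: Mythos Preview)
Your overall strategy is sound and would work, but the paper takes a different and somewhat more economical route on two of the three steps.

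For the overruled property, the paper does \emph{not} argue from string/dilaton/WDVV as you propose. Instead, its Lemma~1 invokes the Adelic characterization of $\mathcal{J}$ (from the quantum HRR papers): for each root of unity $\zeta$, the localization $\mathcal{J}^{(\zeta)}$ lies either on the known overruled cone $\mathcal{L}^{fake}$ (at $\zeta=1$) or in a fixed Lagrangian subspace $T_\zeta$; since $(1-q)T_1\subseteq\mathcal{L}^{fake}$ and $(1-q)T_\zeta\subseteq T_\zeta$, the overruled property of $\mathcal{L}$ is inherited at once. This imports the result from $\mathcal{L}^{fake}$ rather than rederiving it, and sidesteps the cycle-length bookkeeping you flag as ``real work.''

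For the identification $T_{\mathcal{J}(\tau)}\mathcal{L}=\S_{1,\tau}^{-1}\mathcal{K}_+$, the paper also avoids extracting the $\hbar^{-1}$ part of the quantized AD identity. It works directly at genus~$0$: Lemma~2 proves $\mathcal{L}=\S_1^{-1}\bar{\mathcal{L}}$ by splitting $\mathcal{F}_0$ into its unstable part $\tfrac12\<\!\<\mathbf{x}_1,\mathbf{x}_1\>\!\>_{0,2_1}$ and stable part, then matching three pieces of $1-q+\t_1+\tau_1+d_{\t_1}\tilde{\mathcal{F}}_0$ against $\S_1^{-1}(1-q+\bar\t_1+d_{\bar\t_1}\bar{\mathcal{F}}_0)$ using WDVV and Proposition~2.1. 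Your worry about the prefactor dissolves trivially here: $\<\!\<\t_2\>\!\>_{0,1_2}$ is independent of $\t_1$, so subtracting it leaves $\mathcal{L}$ unchanged. Lemma~3 then reads off the tangent space at $\mathcal{J}(\tau_1)$ by noting $\S_{1,\tau}\mathcal{J}(\tau_1)=1-q$ and $T_{1-q}\bar{\mathcal{L}}_\tau=\mathcal{K}_+$. Finally, the cone property is obtained not from the string equation but by observing that the rulings $(1-q)\S_{1,\tau}^{-1}\mathcal{K}_+$ project to a partition of $\mathcal{K}_+$ and each contains the origin.

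In short: your route via string/dilaton/WDVV is the classical one and is viable (the needed equations are in Section~1), but the paper leans on the Adelic machinery and a direct genus-$0$ computation, which makes both of your anticipated obstacles --- the permutative structural equations and the $\hbar^{-1}$ extraction --- largely evaporate.
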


\section*{Acknowledgments}
\

The author thanks Professor Alexander Givental for drawing his attention to $g=1$ quantum K-theory, Ancestor-Descendant correspondence, and various applications; for suggesting related problems, and for his patience and guidance throughout the process.

\section{Basic equations}
\

We generalize the following equations from \cite{perm7} and \cite{WDVV} to the `completely permutable' case.
Here $\M_{g,\ell}(X,d) = \M_{g,\sum_r r\ell_r}(X,d)$, and the subscript $\ell$ is for emphasizing the type of permutation on the marked points.

\subsection{String equation}
\

Set 
\[D\t(q)=\frac{\t(q)-\t(1)}{q-1}.\]

\begin{prop}
\[\begin{array}{ll}
&\left<1, \t_{1,1}, \cdots, \t_{1,\ell_1}; \cdots, \t_{r,k}, \cdots \right>_{0,1_1+\ell,d} \\
=& \left<\t_{1,1}, \cdots, \t_{1,\ell_1}; \cdots, \t_{r,k}, \cdots \right>_{0,\ell,d} + \dsum_{i=1}^{\ell_1} \left<\t_{1,1}, \cdots, D\t_{1,i} \cdots, \t_{1,\ell_1}; \cdots, \t_{r,k}, \cdots \right>_{0,\ell,d},
\end{array}\]
where $1_1$ stands for one $1$-cycle (and use $k_r$ for $k$ $r$-cycles).
\end{prop}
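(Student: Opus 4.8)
The plan is to prove this via the standard geometric mechanism underlying all string equations: compare the moduli space $\bar{\mathcal{M}}_{0,n+1}(X,d)$ (with the extra marked point carrying the input $1$) to $\bar{\mathcal{M}}_{0,n}(X,d)$ via the forgetful map $\pi$ that drops the first marked point, and analyze how the universal cotangent line bundles $L_i$ pull back. Concretely, $\pi^\ast L_i$ differs from $L_i$ (on the upstairs space) by a divisor $D_i$ — the locus where the $i$-th point collides with the forgotten point — and on that divisor $L_i$ is trivial. In K-theory this yields the relation $L_i = \pi^\ast L_i \cdot (1 - \mathcal{O}(-D_i))^{-1}$ type identities; equivalently, for a Laurent polynomial input one gets $\mathbf{t}_{1,i}(L_i) = \pi^\ast(\mathbf{t}_{1,i}(L_i)) + (\text{correction supported on } D_i)$, and the correction is exactly $\pi^\ast(D\mathbf{t}_{1,i}(L_i))$ pushed onto $D_i$, using $D\mathbf{t}(q) = (\mathbf{t}(q)-\mathbf{t}(1))/(q-1)$ so that $\mathbf{t}(q) = \mathbf{t}(1) + (q-1)D\mathbf{t}(q)$ and the factor $(q-1)$ matches the class of $D_i$. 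The input $1$ at the forgotten point means $\mathbf{t}_{0} = 1$, so there is no cotangent-line dependence there and the pushforward $\pi_\ast$ of the virtual structure sheaf is just $\mathcal{O}^{vir}$ downstairs (the relevant fact that $R\pi_\ast \mathcal{O}^{vir}_{0,n+1,d} = \mathcal{O}^{vir}_{0,n,d}$ from \cite{Lee01}).

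The steps, in order: (1) set up $\pi:\bar{\mathcal{M}}_{0,n+1}(X,d)\to\bar{\mathcal{M}}_{0,n}(X,d)$ forgetting the point carrying $1$, and recall $R\pi_\ast\mathcal{O}^{vir}=\mathcal{O}^{vir}$ and $\pi^\ast ev_j = ev_j$ for the remaining points; (2) for each marked point $k_i$ in a $1$-cycle, write the K-theoretic comparison $L_{k_i} = \pi^\ast L_{k_i} + (1-L_{k_i})\cdot[\mathcal{O}_{D_{k_i}}]$-style identity (the sectioning divisor $D_{k_i}$ being the image of the gluing section), hence $\mathbf{t}_{1,i}(L_{k_i}) = \pi^\ast(\mathbf{t}_{1,i}(L_{k_i})) - [\mathcal{O}_{D_{k_i}}]\cdot\pi^\ast(D\mathbf{t}_{1,i}(L_{k_i}))$ after absorbing the $(q-1)$; (3) for marked points in $r$-cycles with $r\geq 2$, check that the same divisor corrections appear but cancel or vanish — here one uses that the $S_n$-action/orbibundle structure $T_{r,k}$ is a tensor product over the cycle and the forgotten point sits in its own $1$-cycle, so the super-trace $str_h$ only picks up the $h$-invariant part and the correction divisors for an $r$-cycle point are permuted nontrivially by $h$ unless... — this is the delicate bookkeeping point; (4) substitute into the definition of the correlator $\langle\cdots\rangle_{0,1_1+\ell,d} = (\prod r^{-\ell_r})\, str_h\, H^\ast(\cdots)$, expand the product $\otimes_i \mathbf{t}_{1,i}(L_{k_i})$ using step (2), push forward by $\pi$, and restrict each correction term to its divisor $D_{k_i}\cong\bar{\mathcal{M}}_{0,n}(X,d)$ to recover the correlators on the right-hand side.

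I expect the main obstacle to be step (3) — the combinatorial/representation-theoretic check that in the "completely permutable" setting the only surviving divisor contributions come from the $1$-cycles, so that the sum $\sum_{i=1}^{\ell_1}$ runs exactly over the inputs $\mathbf{t}_{1,i}$ in $1$-cycles and no $D\mathbf{t}_{r,k}$ terms for $r\geq 2$ appear. The point is that the forgotten marked point carries input $1$ and forms its own $1$-cycle under any $h$ in the relevant conjugacy class, so it is fixed by $h$; the collision divisor $D_{k_i}$ between the forgotten point and an $r$-cycle point is then moved around by the $r$-cycle, and only for $r=1$ is it $h$-fixed, so only then does it contribute to $str_h$. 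Making this precise requires carefully tracking the $h$-equivariant structure on $\bar{\mathcal{M}}_{0,n+1}(X,d)$ versus $\bar{\mathcal{M}}_{0,n}(X,d)$ and the compatibility of $\pi$ with the group actions; everything else is a routine K-theoretic pushforward computation parallel to the classical WDVV/string-equation arguments in \cite{perm7} and \cite{WDVV}. A secondary check is that the normalization factors $\prod r^{-\ell_r}$ match up between the two sides, which they do because forgetting a $1$-cycle point changes $\ell_1$ by one and contributes a compensating factor.
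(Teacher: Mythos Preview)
Your proposal is correct and follows essentially the same route as the paper: forgetful map, comparison $L_i$ versus $ft^\ast L_i$ via the section divisors $\sigma_i$, expansion of each input, and the observation that corrections on points in $r>1$ cycles die under $str_h$. Two small points worth tightening: (i) when you expand the product you should say explicitly that terms with two or more correction factors vanish because the divisors $\sigma_i$ are pairwise disjoint --- this is what reduces you to at most one $D\t$ insertion; (ii) your phrasing ``the divisor is not $h$-fixed so it does not contribute to $str_h$'' is a little loose --- the precise statement is that the $r$ single-correction terms coming from one $r$-cycle are cyclically permuted by $h$ as summands, and the trace of a block cyclic permutation with $r>1$ blocks is zero. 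With those two remarks made explicit, your argument is the paper's proof.
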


\begin{proof}
Let $ft: \M_{0,1_1+\ell}(X,d) \to \M_{0,\ell}(X,d)$ be the forgetful map that forgets the added marked point.
At the $i$ (out of $\ell$)-th marked point, define $\tilde{L}_i = ft^\ast (L_i)$.
Then we have
\[\left<1, \t_{1,1}(\tilde{L}), \cdots, \t_{1,\ell_1}(\tilde{L}); \cdots, \t_{r,k}(\tilde{L}), \cdots \right>_{0,1_1+\ell,d}=\left<\t_{1,1}(L), \cdots, \t_{1,\ell_1}(L); \cdots, \t_{r,k}(L), \cdots \right>_{0,\ell,d},\]
since $ft_\ast 1 = 1$ as fibers of $ft$ are of $g=0$.

Let $\sigma_i: \M_{0,\ell}(X,d) \to \M_{0,1_1+\ell}(X,d)$ be the section defined by the $i^{th}$ marked point.
\begin{itemize}
\item
The bundle $L_i$ is the co-normal bundle of the image of $\sigma_i$, so the push-forward $(\sigma_i)_\ast 1$ is $1-L_i$ on the image of $\sigma_i$, and thus $((\sigma_i)_\ast1)^r = (\sigma_i)_\ast (1-L_i)^{r-1}$.
\item
We have $(\sigma_i)_\ast 1=L_i - \tilde{L}_i$.
Indeed, this follows from the fact that $L_i$ and $\tilde{L}_i$ coincide everywhere outside $\sigma_i$, $(\sigma_i)_\ast 1 = 1-L_i$ on the image of $\sigma_i$, and $\sigma_i^\ast (\tilde{L}_i) = L_i, \sigma_i^\ast (L_i)=1$.
\item
With the fact that $L_i=1$ on the image of $\sigma_i$, we know that $(L_i-1)(\sigma_i)_\ast 1=0$.
\end{itemize}

Now, by Taylor's formula, we have 
\[\begin{array}{ll}
&\t(\tilde{L}_i)-\t(L_i) \\
=& \dsum_{k\geq0}\dfrac{\t^{(k)}(L_i)}{k!} ((-\sigma_i)_\ast 1)^k \\
=&(-\sigma_i)_\ast \left[\dsum_{k>0}\dfrac{\t^{(k)}(1)}{k!} (L_i-1)^{k-1} \right]\\
=&(-\sigma_i)_\ast \dfrac{\t(L)-\t(1)}{L-1}
\end{array}\]

Inserting $\t(\tilde{L}_i) = \t(L_i)-(\sigma_i)_\ast D\t(L_i)$ into the expression
\[\left<1, \t_{1,1}(\tilde{L}), \cdots, \t_{1,\ell_1}(\tilde{L}); \cdots, \t_{r,k}(\tilde{L}), \cdots \right>_{0,1_1+\ell,d},\]
we get a sum of correlators, with inputs $\t_{r,k}(L_i)$ or $-(\sigma_i)_\ast D\t_{r,k}(L_i)$ for the $i^{th}$ marked point that is in the $k^{th}$ $r$-cycle.
Note that the divisors $\sigma_i$ are disjoint, so the correlators vanish if two or more terms carry inputs $-(\sigma_i)_\ast D\t_{r,k}(L_i)$.
As a corollary, correlators also vanish if a point in an $r>1$ cycle carries $-(\sigma_i)_\ast D\t_{r,k}(L_i)$, as these points are permuted by the symmetry $h$ and do not contribute to the super-trace.
Thus, all but the following correlators vanish.
\begin{itemize}
\item
$\left<1,\t_{1,1}(L), \cdots, \t_{1,\ell_1}(L); \cdots, \t_{r,k}(L), \cdots \right>_{0,1_1+\ell,d}$;
\item
$\left<1,\t_{1,1}(L), \cdots, (\sigma_i)_\ast D\t_{1,i}(L), \cdots, \t_{1,\ell_1}(L); \cdots, \t_{r,k}(L), \cdots \right>_{0,1_1+\ell,d}$.
\end{itemize}
By the fact that $(\sigma_i)_\ast (L_j) = L_j$ if $j \neq i$, the second type of correlators simplify as
\[\left<\t_{1,1}, \cdots, D\t_{1,i} \cdots, \t_{1,\ell_1}; \cdots, \t_{r,k}, \cdots \right>_{0,\ell,d},\]
and string equation follows.
\end{proof}

\subsection{Dilaton equation}
\

\begin{prop}
\[\left<L-1, \t_{1,1}, \cdots, \t_{1,\ell_1}; \cdots, \t_{k,i}, \cdots \right>_{0,1_1+\ell,d} = (\ell_1-2)\left<\t_{1,1}, \cdots, \t_{1,\ell_1}; \cdots, \t_{k,i}, \cdots \right>_{0,\ell,d}.\]
\end{prop}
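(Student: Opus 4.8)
The plan is to run the same forgetful‑map argument as for the string equation, now exploiting that the distinguished input $L-1$ annihilates every term produced by the sections. Let $ft\colon\M_{0,1_1+\ell}(X,d)\to\M_{0,\ell}(X,d)$ be the map forgetting the first marked point (the one carrying $L-1$); since that point forms its own $1$-cycle it is fixed by the symmetry $h$, so $ft$ is $h$-equivariant. Write $\ell'=\sum_r r\ell_r$ for the number of remembered points, $\sigma_i$ ($1\le i\le\ell'$) for the associated sections, $D_i=\mathrm{im}\,\sigma_i$, and $\tilde L_i=ft^\ast L_i$; keep $L$ for the cotangent line at the forgotten point. Exactly as in the string equation, $L_i-\tilde L_i=(\sigma_i)_\ast 1$; and on $D_i$ the forgotten point sits on a three‑pointed bubble, so $\sigma_i^\ast L=1$ and hence $(L-1)\cdot(\sigma_i)_\ast(a)=(\sigma_i)_\ast\big((\sigma_i^\ast L-1)a\big)=0$ for every $a$.

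First I would replace each $L_i$ by $\tilde L_i$ in the remembered inputs. Telescoping the difference $\big(\bigotimes_{r,k}T_{r,k}(L)\big)-\big(\bigotimes_{r,k}T_{r,k}(\tilde L)\big)$ exhibits it as a sum of terms each divisible by some $L_i-\tilde L_i=(\sigma_i)_\ast 1$, hence each of the form $(\sigma_i)_\ast(\,\cdot\,)$, so multiplying by $L-1$ kills all of them. Therefore
\[\left<L-1,\t_{1,1}(L),\cdots,\t_{r,k}(L),\cdots\right>_{0,1_1+\ell,d}=\left<L-1,\t_{1,1}(\tilde L),\cdots,\t_{r,k}(\tilde L),\cdots\right>_{0,1_1+\ell,d}.\]
Now every remembered input is pulled back along $ft$, and (as in the string equation, $ft$ being a genus‑$0$ universal curve) $\mathcal{O}^{vir}$ pulls back along $ft$; the $h$-equivariant projection formula rewrites the right‑hand side as
\[\big(\textstyle\prod_r r^{-\ell_r}\big)\;str_h\,H^\ast\!\Big(\M_{0,\ell}(X,d);\;\mathcal{O}^{vir}\otimes\big(\textstyle\bigotimes_{r,k}T_{r,k}\big)\otimes ft_\ast(L-1)\Big),\]
so everything reduces to the $h$-equivariant $K$-class $ft_\ast(L-1)$.

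To compute it I would invoke the classical identity $L=\omega_{ft}(D_1+\cdots+D_{\ell'})$ together with the $h$-equivariant exact sequence
\[0\to\omega_{ft}\to L\to\bigoplus_{i=1}^{\ell'}\mathcal{O}_{D_i}\to 0,\]
which follows from $\omega_{ft}|_{D_i}\cong L_i$, $\mathcal{O}(D_i)|_{D_i}\cong L_i^{-1}$ and the disjointness of the $D_i$. As the fibers of $ft$ are connected of genus $0$, $ft_\ast\mathcal{O}=[\mathcal{O}]$ and relative Serre duality gives $ft_\ast\omega_{ft}=-[\mathcal{O}]$, both with trivial $h$-action; and $ft_\ast\mathcal{O}_{D_i}=[\mathcal{O}]$ because $\sigma_i$ is a section, with $h$ permuting these $\ell'$ copies of $[\mathcal{O}]$ exactly as it permutes the remembered points. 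Thus, $h$-equivariantly,
\[ft_\ast(L-1)\;=\;-2\,[\mathcal{O}]\;+\;\sum_{i=1}^{\ell'}[\mathcal{O}_{D_i}],\]
where the label $D_i$ records the permutation action.

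Finally I would take the super‑trace. The term $-2[\mathcal{O}]$ together with the $\ell_1$ terms indexed by the $1$-cycles of $h$ are $h$-fixed, so they contribute $(\ell_1-2)\left<\t_{1,1},\cdots,\t_{r,k},\cdots\right>_{0,\ell,d}$, and the normalization factors $\prod_r r^{-\ell_r}$ on the two sides agree because the added cycle has length $1$. For each $r$-cycle of $h$ with $r\ge2$, the corresponding block $\bigoplus_{i}\mathcal{O}_{D_i}\cong\mathcal{O}^{\oplus r}$ turns the relevant cohomology into $W^{\oplus r}$ with $h$ acting as (its action on $W$) tensored with the length‑$r$ cyclic shift of the summands, whose super‑trace vanishes since a cyclic permutation of $r\ge2$ objects is traceless. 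Summing over the cycles of $h$ gives the identity. I expect the main obstacle to be exactly this last bookkeeping: one must carry the $S_{1+\ell'}$-action carefully enough through the computation of $ft_\ast L$ to recognise that, although $ft_\ast L$ is the trivial bundle of rank $\ell'-1$, its $\ell'-\ell_1=\sum_{r\ge2}r\ell_r$ ``extra'' summands fall into cyclically permuted blocks of vanishing super‑trace --- this is the reason the dilaton coefficient is $\ell_1-2$ rather than the naive $n-2$.
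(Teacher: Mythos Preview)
Your argument is correct and follows essentially the same route as the paper's: use the forgetful map, note that $L-1$ kills all the correction terms coming from the sections so that $L_i$ may be replaced by $\tilde L_i$, and then reduce to computing the $h$-equivariant pushforward $ft_\ast(L-1)$, which is the permutation representation on $\ell'$ letters minus two trivial copies and hence has super-trace $\ell_1-2$.

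The only difference is cosmetic: you compute $ft_\ast(L-1)$ via the short exact sequence $0\to\omega_{ft}\to L\to\bigoplus_i\mathcal{O}_{D_i}\to 0$ together with relative Serre duality, whereas the paper computes the fiberwise cohomology directly by identifying $H^0(C,L_1)$ with the space of meromorphic differentials on the genus-$0$ curve $C$ having at most simple poles at the marked points (and matching residues at nodes), which is visibly the permutation representation on residues modulo the single linear relation that they sum to zero. These are two packagings of the same calculation.
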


\begin{proof}
Identical to that of the string equation, we let $ft: \M_{0,1_1+\ell}(X,d) \to \M_{0,\ell}(X,d)$ be the forgetful map that forgets the added marked point, and $\tilde{L}_i = ft^\ast (L_i)$.
Then we have 
\[\begin{array}{ll}
&\left<L_1-1, \t_{1,1}(L), \cdots, \t_{1,\ell_1}(L); \cdots, \t_{k,i}(L), \cdots \right>_{0,1_1+\ell,d} \\
=& \left<L_1-1, \t_{1,1}(\tilde{L}), \cdots, \t_{1,\ell_1}(\tilde{L}); \cdots, \t_{k,i}(\tilde{L}), \cdots \right>_{0,1_1+\ell,d},
\end{array}\]
since $L_1-1$ vanish on the image of $\sigma_i$ for all $i\neq 1$.

To compute the push-forward of the right-hand side along $ft$, we need to compute $ft_\ast (L_1-1)$.
Over a point $(C,u) \in \M_{0,\ell}(X,d)$, the fiber of $ft$ is $C$, and thus the fiber of $ft_\ast (L_1-1)$ is $H^0(C, L_1-1) - H^1(C,L_1-1)$.
Given that the genus of $C$ is $0$ and that $\ell \neq 0$, we have $H^1(C,L_1-1)=0$.
On the other hand, $L_1$ differs from the dualizing sheaf of $C$ by the divisor of marked points, so $H^0(C, L_1)$ is the space of holomorphic differentials on $C$, with at most first order poles at marked points or nodes, such that the sum of residues at each node is $0$.
Indeed, when the genus of $C$ is $0$, these holomorphic differentials are uniquely determined by residues at marked points, while the only restriction of these residues is that they sum up to $0$.
The symmetry $h$ acts on these residues by permuting them.
Thus $H^0(C, L_1-1) - H^1(C,L_1-1)$ is the trivial bundle of dimension $n-2$, with $h$ acting on the $n$ dimensions as permuting the components, and on the $-2$ dimensions trivially.
So the super-trace of the $h$ action is $\ell_1-2$, and we have the dilaton equation.
\end{proof}

\subsection{WDVV equation}
\

Given inputs $A_1,\cdots,A_n \in K[q^{\pm}]$ as inputs for $1$-cycles, and $\tau=(\tau_1,\cdots) \in K^\infty$, recall that we defined
\[\left<\!\left<A_1,\cdots,A_n\right>\!\right>_{0,n} = \sum_{\ell,d} \frac{Q^d}{\ell!} \left<A_1,\cdots,A_n,\tau_1,\cdots;\cdots,\tau_r, \cdots\right>_{0,n_1+\ell,d}.\]

Pick a basis $\phi_\alpha$ of $K^0(X)$, and set
\[\begin{array}{ll}
(\varphi, \psi) &= \chi(X,\varphi \otimes \psi),\\
G_{\alpha\beta} &= (\phi_\alpha,\phi_\beta)+\left<\!\left< \phi_\alpha, \phi_\beta\right>\!\right>_{0,2}.
\end{array}\]
Let $[G^{\alpha\beta}]$ be the inverse of the matrix $[G_{\alpha\beta}]$.

\begin{lemma} \cite{WDVV}
The following expression is totally symmetric in $A_1, A_2, A_3, A_4$. 
\[\sum_{\mu,\nu} \left<\!\left<A_1,A_2,\phi_\mu\right>\!\right>_{0,3}G^{\mu\nu}\left<\!\left<\phi_\nu,A_3,A_4\right>\!\right>_{0,3}.\]
\end{lemma}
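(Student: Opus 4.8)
The plan is to deduce this WDVV-type symmetry from the splitting axiom for the boundary divisors of $\bar{\mathcal{M}}_{0,4}$ together with the K-theoretic degeneration/gluing formula, carried out in the permutation-equivariant setting. The key point is that the quantity $\sum_{\mu,\nu}\langle\!\langle A_1,A_2,\phi_\mu\rangle\!\rangle_{0,3}\,G^{\mu\nu}\,\langle\!\langle\phi_\nu,A_3,A_4\rangle\!\rangle_{0,3}$ should be recognized as the contribution of a two-node degenerate stratum, i.e. the pushforward to a point of $\mathcal{O}^{vir}$ against the inputs $A_i$ on a nodal curve whose two components each carry two of the $A_i$'s (plus arbitrarily many $\tau$-marked points distributed between the components, summed via the $\bar\ell!$ and $Q^d$ bookkeeping) with the node contributing the ``diagonal'' class inverted by $G$. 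Because the stratum in $\bar{\mathcal{M}}_{0,4}$ with a fixed distribution $\{1,2\}\,|\,\{3,4\}$ is rationally equivalent (in fact numerically equivalent on the level of structure sheaves, using that $\bar{\mathcal{M}}_{0,4}\cong\mathbb{P}^1$ and all three boundary points are linearly equivalent) to the strata $\{1,3\}\,|\,\{2,4\}$ and $\{1,4\}\,|\,\{2,3\}$, the three a priori different-looking expressions coincide, giving full $S_4$-symmetry.

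First I would set up the gluing formula in the form needed here: for the node in $\bar{\mathcal{M}}_{0,n}(X,d)$, restriction to the node splits the moduli space as a fiber product over $X$, and the K-theoretic pushforward introduces the structure sheaf of the diagonal $\mathcal{O}_\Delta\in K^0(X\times X)$, which in the chosen basis reads $\sum_{\alpha}\phi_\alpha\boxtimes\phi^\alpha$ where $\phi^\alpha$ is dual under $\chi(X,-\otimes-)$. The correction from $(\phi_\alpha,\phi_\beta)$ to $G_{\alpha\beta}=(\phi_\alpha,\phi_\beta)+\langle\!\langle\phi_\alpha,\phi_\beta\rangle\!\rangle_{0,2}$ is exactly the standard effect of reabsorbing, via the metric, the genus-zero two-point descendant-free correlators coming from the $\tau$-marked components that can bubble off adjacent to the node; this is where I would invoke the $\langle\!\langle\ \rangle\!\rangle_{0,n}$ notation and its recursive structure as already defined in the excerpt. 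Second, I would verify that the permutation symmetry $h$ acts compatibly: all of $A_1,\dots,A_4$ sit in $1$-cycles, so $h$ only permutes the $\tau$-marked points, which are symmetrized already by the $1/\bar\ell!$; hence the super-trace splits as a product over the two components exactly as in the non-equivariant case, and no new permutation-equivariant subtlety enters beyond what is bundled into the correlators $\langle\!\langle\cdots\rangle\!\rangle_{0,3}$ themselves.

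The third and decisive step is the geometric identity on $\bar{\mathcal{M}}_{0,4}$: I would pull back the three boundary divisors along the stabilization map $\bar{\mathcal{M}}_{0,n}(X,d)\to\bar{\mathcal{M}}_{0,4}$ (forgetting all but the $A$-marked points and the map) and use that their structure sheaves push forward to the same class on the base — equivalently, that $[\mathcal{O}_{pt}]$ is independent of the point chosen on $\mathbb{P}^1$ — to conclude that the three distributions of $\{A_1,A_2,A_3,A_4\}$ into pairs give equal correlators. Combined with the obvious symmetry of each $\langle\!\langle\cdot,\cdot,\phi_\mu\rangle\!\rangle_{0,3}$ in its first two arguments and the symmetry of $G^{\mu\nu}$, this yields invariance under the full symmetric group $S_4$.

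The main obstacle I anticipate is not the combinatorics of $\bar{\mathcal{M}}_{0,4}$ but making the gluing/splitting formula precise in permutation-equivariant K-theory when the bubbled-off $\tau$-components are themselves carrying permuted marked points: one must check that summing over all ways of distributing $\tau$-inputs (with the $1/\bar\ell!$ weights) across the two sides of the node, and over the $S_{\bar\ell}$-orbits, reproduces exactly the product $\langle\!\langle A_1,A_2,\phi_\mu\rangle\!\rangle_{0,3}\langle\!\langle\phi_\nu,A_3,A_4\rangle\!\rangle_{0,3}$ with the metric correction $G$ rather than the naive pairing. I expect this to follow by the same bookkeeping used in \cite{perm7} for the non-completely-permutable case, since the $A_i$'s are confined to $1$-cycles and the $h$-action is transparent on them; the only care needed is to track that the virtual structure sheaf's behavior under K-theoretic gluing (the appearance of $\mathcal{O}_\Delta$, not a shifted version) is as in Lee's \cite{Lee01} framework, which the excerpt already takes as given.
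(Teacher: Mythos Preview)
Your approach is essentially the same as the paper's: pull back the three linearly equivalent boundary points of $\bar{\mathcal{M}}_{0,4}\cong\mathbb{P}^1$ along the forgetful map, compute the contribution of each fiber, and conclude the three pairings of $\{A_1,\dots,A_4\}$ agree. The permutation-equivariant bookkeeping is indeed transparent since the $A_i$ sit in $1$-cycles.

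The one place where your sketch is vaguer than it should be is exactly the obstacle you flag: how $G^{\mu\nu}$ replaces $g^{\mu\nu}$. The paper (following \cite{WDVV}) makes this precise by identifying the virtual structure sheaf of the fiber $\lambda^{-1}(1)$ not with a single nodal stratum but with an \emph{alternating sum} of chain strata
\[
\sum_{k\ge 1}(-1)^{k-1}\,\M_{0,n_0+3}(X,d_0)\times_\Delta \M_{0,n_1+2}(X,d_1)\times_\Delta\cdots\times_\Delta \M_{0,n_k+3}(X,d_k),
\]
where the intermediate two-pointed components contribute exactly the factors $\langle\!\langle\phi_\beta,\phi_\gamma\rangle\!\rangle_{0,2}$. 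Summing the resulting geometric series via $[I+M]^{-1}=I-M+M^2-\cdots$ with $M=[g^{\alpha\gamma}\langle\!\langle\phi_\gamma,\phi_\beta\rangle\!\rangle_{0,2}]$ converts $[g^{\alpha\beta}]$ into $[G^{\alpha\beta}]$. Your phrase ``reabsorbing the $\tau$-marked components that can bubble off adjacent to the node'' is the right intuition, but without the alternating signs and the explicit chain picture the argument is incomplete; this K-theoretic correction to the splitting axiom is the whole content of the cited result in \cite{WDVV}, and you should state it rather than gesture at it.
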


\begin{proof}
Consider the forgetful map $\lambda: \M_{0,4_1+\ell}(X,d) \to \bar{\mathcal{M}}_{0,4}=\mathbb{CP}^1$, where the identification $\bar{\mathcal{M}}_{0,4}=\mathbb{CP}^1$ is given by taking cross-ratio.
Note that $1 \in \mathbb{CP}^1$ corresponds to a degenerated curve in $\bar{\mathcal{M}}_{0,4}$, with the first two marked points on a nodal component, and the last two on another.
Similarly $0,\infty \in \mathbb{CP}^1$ also correspond to the other two nodal curves in $\bar{\mathcal{M}}_{0,4}$.
Let $A_1,\cdots, A_4$ be inputs possibly containing universal cotangent bundle $L_i$'s.
Use $\<\!\<A_1,A_2,A_3,A_4\>\!\>^z_{0,4}$ for the contribution from maps $\lambda^{-1}(z)$ to $\<\!\<A_1,A_2,A_3,A_4\>\!\>_{0,4}$.
Note that $\<\!\<A_1,A_2,A_3,A_4\>\!\>^z_{0,4}$ continuously depend on $z$, but takes value in the completely disconnected set $\Lambda$, so its value must be independent of $z$.

We first compute $\<\!\<A_1,A_2,A_3,A_4\>\!\>^1_{0,4}$.
By \cite{WDVV}, the virtual structural sheaf of $\lambda^{-1}(1)$ is identified with the structural sheaf of the following alternating sum of strata:

\tikzset{every picture/.style={line width=0.75pt}}
\begin{tikzpicture}[x=0.75pt,y=0.75pt,yscale=-1,xscale=1]
\draw    (70,19) -- (69.33,121) ;
\draw    (205,21) -- (205.33,84) ;
\draw    (357.33,16) -- (357.33,78) ;
\draw    (47.33,99) -- (151.33,100) ;
\draw    (228.33,104) -- (300.33,104) ;
\draw    (394.33,106) -- (453.33,105) ;
\draw    (197,60) -- (246.33,113) ;
\draw    (351.17,53.5) -- (374.33,96) ;
\draw    (362.17,84.5) -- (411.33,111) ;
\draw  [line width=3]  (68,36.17) .. controls (68,34.97) and (68.97,34) .. (70.17,34) .. controls (71.36,34) and (72.33,34.97) .. (72.33,36.17) .. controls (72.33,37.36) and (71.36,38.33) .. (70.17,38.33) .. controls (68.97,38.33) and (68,37.36) .. (68,36.17) -- cycle ;
\draw  [line width=3]  (68,56.17) .. controls (68,54.97) and (68.97,54) .. (70.17,54) .. controls (71.36,54) and (72.33,54.97) .. (72.33,56.17) .. controls (72.33,57.36) and (71.36,58.33) .. (70.17,58.33) .. controls (68.97,58.33) and (68,57.36) .. (68,56.17) -- cycle ;
\draw  [line width=3]  (118,100.17) .. controls (118,98.97) and (118.97,98) .. (120.17,98) .. controls (121.36,98) and (122.33,98.97) .. (122.33,100.17) .. controls (122.33,101.36) and (121.36,102.33) .. (120.17,102.33) .. controls (118.97,102.33) and (118,101.36) .. (118,100.17) -- cycle ;
\draw  [line width=3]  (138,100.17) .. controls (138,98.97) and (138.97,98) .. (140.17,98) .. controls (141.36,98) and (142.33,98.97) .. (142.33,100.17) .. controls (142.33,101.36) and (141.36,102.33) .. (140.17,102.33) .. controls (138.97,102.33) and (138,101.36) .. (138,100.17) -- cycle ;
\draw  [line width=3]  (203,31.17) .. controls (203,29.97) and (203.97,29) .. (205.17,29) .. controls (206.36,29) and (207.33,29.97) .. (207.33,31.17) .. controls (207.33,32.36) and (206.36,33.33) .. (205.17,33.33) .. controls (203.97,33.33) and (203,32.36) .. (203,31.17) -- cycle ;
\draw  [line width=3]  (203,51.17) .. controls (203,49.97) and (203.97,49) .. (205.17,49) .. controls (206.36,49) and (207.33,49.97) .. (207.33,51.17) .. controls (207.33,52.36) and (206.36,53.33) .. (205.17,53.33) .. controls (203.97,53.33) and (203,52.36) .. (203,51.17) -- cycle ;
\draw  [line width=3]  (270,104.17) .. controls (270,102.97) and (270.97,102) .. (272.17,102) .. controls (273.36,102) and (274.33,102.97) .. (274.33,104.17) .. controls (274.33,105.36) and (273.36,106.33) .. (272.17,106.33) .. controls (270.97,106.33) and (270,105.36) .. (270,104.17) -- cycle ;
\draw  [line width=3]  (290,104.17) .. controls (290,102.97) and (290.97,102) .. (292.17,102) .. controls (293.36,102) and (294.33,102.97) .. (294.33,104.17) .. controls (294.33,105.36) and (293.36,106.33) .. (292.17,106.33) .. controls (290.97,106.33) and (290,105.36) .. (290,104.17) -- cycle ;
\draw  [line width=3]  (355,27.17) .. controls (355,25.97) and (355.97,25) .. (357.17,25) .. controls (358.36,25) and (359.33,25.97) .. (359.33,27.17) .. controls (359.33,28.36) and (358.36,29.33) .. (357.17,29.33) .. controls (355.97,29.33) and (355,28.36) .. (355,27.17) -- cycle ;
\draw  [line width=3]  (355,44.17) .. controls (355,42.97) and (355.97,42) .. (357.17,42) .. controls (358.36,42) and (359.33,42.97) .. (359.33,44.17) .. controls (359.33,45.36) and (358.36,46.33) .. (357.17,46.33) .. controls (355.97,46.33) and (355,45.36) .. (355,44.17) -- cycle ;
\draw  [line width=3]  (419,105.17) .. controls (419,103.97) and (419.97,103) .. (421.17,103) .. controls (422.36,103) and (423.33,103.97) .. (423.33,105.17) .. controls (423.33,106.36) and (422.36,107.33) .. (421.17,107.33) .. controls (419.97,107.33) and (419,106.36) .. (419,105.17) -- cycle ;
\draw  [line width=3]  (440,105.17) .. controls (440,103.97) and (440.97,103) .. (442.17,103) .. controls (443.36,103) and (444.33,103.97) .. (444.33,105.17) .. controls (444.33,106.36) and (443.36,107.33) .. (442.17,107.33) .. controls (440.97,107.33) and (440,106.36) .. (440,105.17) -- cycle ;

\draw (42,25.4) node [anchor=north west][inner sep=0.75pt]    {$A_{1}$};
\draw (41,49.4) node [anchor=north west][inner sep=0.75pt]    {$A_{2}$};
\draw (107,111.4) node [anchor=north west][inner sep=0.75pt]    {$A_{3}$};
\draw (130,111.4) node [anchor=north west][inner sep=0.75pt]    {$A_{4}$};
\draw (178,17.4) node [anchor=north west][inner sep=0.75pt]    {$A_{1}$};
\draw (177,41.4) node [anchor=north west][inner sep=0.75pt]    {$A_{2}$};
\draw (330,11.4) node [anchor=north west][inner sep=0.75pt]    {$A_{1}$};
\draw (329,35.4) node [anchor=north west][inner sep=0.75pt]    {$A_{2}$};
\draw (262,111.4) node [anchor=north west][inner sep=0.75pt]    {$A_{3}$};
\draw (285,111.4) node [anchor=north west][inner sep=0.75pt]    {$A_{4}$};
\draw (412,110.4) node [anchor=north west][inner sep=0.75pt]    {$A_{3}$};
\draw (435,110.4) node [anchor=north west][inner sep=0.75pt]    {$A_{4}$};
\draw (153,55.4) node [anchor=north west][inner sep=0.75pt]    {$-$};
\draw (473,57.4) node [anchor=north west][inner sep=0.75pt]    {$-$};
\draw (311,53.4) node [anchor=north west][inner sep=0.75pt]    {$+$};
\draw (510,59.4) node [anchor=north west][inner sep=0.75pt]    {$\cdots $};
\end{tikzpicture}

\[\sum \M_{0,n_0+3}(X,d_0) \times_\Delta \M_{0,n_1+3}(X,d_1) - \sum \M_{0,n_0+3}(X,d_0) \times_\Delta \M_{0,n_1+2}(X,d_1) \times_\Delta \M_{0,n_2+3}(X,d_2) + \cdots ,\]
where $\Delta$ is the diagonal of $X \times X$ and the product is the fibered product over $\Delta$, given by evaluation maps at the added marked points.

As a result, the total contribution from these strata is 
\[\begin{array}{ll}&\dsum_{\alpha,\beta}\<\!\<A_1,A_2,\phi_\alpha\>\!\>_{0,3} g^{\alpha\beta} \<\!\<\phi_\beta,A_3,A_4\>\!\>_{0,3}\\
 - &\dsum_{\alpha,\beta,\gamma,\delta}\<\!\<A_1,A_2,\phi_\alpha\>\!\>_{0,3} g^{\alpha\beta} \<\!\<\phi_\beta,\phi_\gamma\>\!\>_{0,2}g^{\gamma\delta} \<\!\<\phi_\delta,A_3,A_4\>\!\>_{0,3} \\
+& \cdots,\end{array}\]
where $g^{\alpha\beta}$ are entries in the inverse of the matrix $[g_{\alpha\beta} = (\phi_\alpha,\phi_\beta)]$.
By using the matrix identity $[I+G]^{-1}=I-G+G^2- \cdots$, the contribution above is simplified as
\[\begin{array}{ll}&\left[\<\!\<A_1,A_2,\phi_\alpha\>\!\>_{0,3}\right] \left[g^{\alpha\beta}\right]\left[\delta_{\alpha\beta} + \dsum_\gamma g^{\alpha\gamma}\<\!\<\phi_\gamma,\phi_\beta\>\!\>_{0,2}\right]^{-1} \left[\<\!\<\phi_\beta,A_3,A_4\>\!\>_{0,3}\right]\\
=& \left[\<\!\<A_1,A_2,\phi_\alpha\>\!\>_{0,3}\right] \left[G^{\alpha\beta}\right] \left[\<\!\<\phi_\beta,A_3,A_4\>\!\>_{0,3}\right], \end{array}\]
where $\left[\cdots\right]$ stands for the matrix (row or column vectors) formed by $\cdots$, with rows and columns labeled by $\alpha,\beta$.
So 
\[\<\!\<A_1,A_2,A_3,A_4\>\!\>^1_{0,4} = \sum_{\mu,\nu} \left<\!\left<A_1,A_2,\phi_\mu\right>\!\right>_{0,3}G^{\mu\nu}\left<\!\left<\phi_\nu,A_3,A_4\right>\!\right>_{0,3}.\]

Next, $\<\!\<A_1,A_2,A_3,A_4\>\!\>^1_{0,4}$ is symmetric in $A_1,A_2$ and also in $A_3,A_4$, and that $\<\!\<A_1,A_2,A_3,A_4\>\!\>^1_{0,4} = \<\!\<A_1,A_2,A_3,A_4\>\!\>^0_{0,4}$, so $\<\!\<A_1,A_2,A_3,A_4\>\!\>^1_{0,4}$ is totally symmetric in $A_1,A_2,A_3,A_4$.
\end{proof}

We have the following direct corollary.
\begin{prop} 
The following expression is totally symmetric in $\alpha,\beta,\gamma,\delta$.
\[\sum_{\mu,\nu} \left<\!\left<\phi_\alpha,\phi_\beta,\phi_\mu\right>\!\right>_{0,3}G^{\mu\nu}\left<\!\left<\phi_\nu,\phi_\gamma,\phi_\delta\right>\!\right>_{0,3}.\]
\end{prop}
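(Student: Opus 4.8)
The plan is to observe that this proposition is the specialization of the preceding Lemma in which all four inputs are taken to be basis vectors of $K^0(X)$. The only thing to check is admissibility of the substitution. Each $\phi_\alpha \in K^0(X)$ is in particular an element of $\mathcal{K} = K^0(X)\otimes\Lambda$ which does not involve the variable $q$; hence it is an input of the form allowed in the Lemma, which permits arbitrary elements of $K[q^\pm]$ (possibly containing $L_i$'s), a fortiori $q$-independent ones. So I may legitimately set $A_1 = \phi_\alpha$, $A_2 = \phi_\beta$, $A_3 = \phi_\gamma$, $A_4 = \phi_\delta$ in the bilinear expression $\sum_{\mu,\nu}\<\!\<A_1,A_2,\phi_\mu\>\!\>_{0,3}G^{\mu\nu}\<\!\<\phi_\nu,A_3,A_4\>\!\>_{0,3}$.

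By the Lemma, this expression is totally symmetric under permutations of $(A_1,A_2,A_3,A_4)$. After the substitution the four inputs are exactly $\phi_\alpha,\phi_\beta,\phi_\gamma,\phi_\delta$, and permuting these four inputs corresponds precisely to permuting the labels $\alpha,\beta,\gamma,\delta$. Therefore the quantity $\sum_{\mu,\nu}\<\!\<\phi_\alpha,\phi_\beta,\phi_\mu\>\!\>_{0,3}G^{\mu\nu}\<\!\<\phi_\nu,\phi_\gamma,\phi_\delta\>\!\>_{0,3}$ is invariant under the full symmetric group $S_4$ acting on $\{\alpha,\beta,\gamma,\delta\}$, which is the assertion.

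There is essentially no obstacle: the proposition is a pure specialization of the Lemma, and the one point worth recording explicitly is that basis vectors of $K^0(X)$ are valid inputs in the sense of the Lemma, which is immediate since they are constant in $q$. If one wishes, one may additionally remark that the two-point correlators $\<\!\<\phi_\mu,\phi_\nu\>\!\>_{0,2}$ entering $G_{\mu\nu}$ are themselves symmetric by the $S_n$-invariance of $\mathcal{O}^{vir}$, so that $[G^{\mu\nu}]$ is a well-defined symmetric matrix; but this is already used inside the proof of the Lemma and need not be reproved here.
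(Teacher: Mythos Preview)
Your proof is correct and is exactly the paper's approach: the paper states this proposition as a ``direct corollary'' of the preceding Lemma, obtained by specializing $A_1=\phi_\alpha$, $A_2=\phi_\beta$, $A_3=\phi_\gamma$, $A_4=\phi_\delta$.
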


An alternative version of the WDVV equation also holds \cite{perm7}.
\begin{prop} 
For all $\varphi,\psi \in K$,
\[\begin{array}{ll}
&(\varphi,\psi)+(1-xy)\left<\!\left< \dfrac{\varphi}{1-xL}, \dfrac{\psi}{1-yL}\right>\!\right>_{0,2} \\
=& \dsum_{\alpha,\beta} \left((\varphi,\phi_\alpha)+\left<\!\left< \dfrac{\varphi}{1-xL}, \phi_\alpha\right>\!\right>_{0,2}\right)G^{\alpha\beta}\left((\phi_\beta,\psi)+\left<\!\left< \phi_\beta, \dfrac{\psi}{1-yL}\right>\!\right>_{0,2}\right).
\end{array}\]
\end{prop}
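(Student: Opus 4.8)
The plan is to derive this alternative WDVV relation from the WDVV Lemma — the fact that $\sum_{\mu,\nu}\<\!\<A_1,A_2,\phi_\mu\>\!\>_{0,3}G^{\mu\nu}\<\!\<\phi_\nu,A_3,A_4\>\!\>_{0,3}$ is totally symmetric in $A_1,A_2,A_3,A_4$ — applied with $A_1=\frac{\varphi}{1-xL}$, $A_2=\frac{\psi}{1-yL}$, $A_3=A_4=1$, and then to rewrite every three-point bracket in terms of the two-point brackets appearing in the statement by means of the string equation of Section 1.1. The one algebraic fact driving the computation is $D\!\left(\frac{\varphi}{1-xL}\right)=\frac{x}{1-x}\cdot\frac{\varphi}{1-xL}$, which I would check first directly from the definition of $D$.

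First I would record the effect of the string equation on the three-point brackets that occur. Because the classes $\tau_r$ lie in $K$ and carry no $q$-dependence, their $D$-corrections vanish, so for any inputs $P,Q$ (possibly containing $L$), writing $P|_{L=1}$ for the substitution $L=1$, one gets
\[\<\!\<1,P,Q\>\!\>_{0,3}=\chi\!\left(X,\,P|_{L=1}\otimes Q|_{L=1}\right)+\<\!\<P,Q\>\!\>_{0,2}+\<\!\<DP,Q\>\!\>_{0,2}+\<\!\<P,DQ\>\!\>_{0,2},\]
where the first term accounts for the unstable $\M_{0,3}(X,0)=X$ contribution to the left side (on which $L=1$ and $ev_i=\mathrm{id}$), which the forgetful-map argument misses because it requires the target moduli space $\M_{0,2}(X,d)$ to be nonempty. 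Taking $(P,Q)=\left(\frac{\varphi}{1-xL},\phi_\mu\right)$, where $D\phi_\mu=0$, gives $\<\!\<\frac{\varphi}{1-xL},1,\phi_\mu\>\!\>_{0,3}=\frac{1}{1-x}\left((\varphi,\phi_\mu)+\<\!\<\frac{\varphi}{1-xL},\phi_\mu\>\!\>_{0,2}\right)$; taking $(P,Q)=\left(\frac{\varphi}{1-xL},\frac{\psi}{1-yL}\right)$ and using $1+\frac{x}{1-x}+\frac{y}{1-y}=\frac{1-xy}{(1-x)(1-y)}$ gives $\<\!\<\frac{\varphi}{1-xL},\frac{\psi}{1-yL},1\>\!\>_{0,3}=\frac{1}{(1-x)(1-y)}\left((\varphi,\psi)+(1-xy)\<\!\<\frac{\varphi}{1-xL},\frac{\psi}{1-yL}\>\!\>_{0,2}\right)$. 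Specializing the first formula to $\varphi=1$, $x=0$ and expanding $1=\sum_\beta c^\beta\phi_\beta$ yields $\<\!\<\phi_\nu,1,1\>\!\>_{0,3}=\sum_\beta c^\beta G_{\nu\beta}$, hence $\sum_\nu G^{\mu\nu}\<\!\<\phi_\nu,1,1\>\!\>_{0,3}=c^\mu$ and therefore $\sum_{\mu,\nu}\<\!\<A,B,\phi_\mu\>\!\>_{0,3}G^{\mu\nu}\<\!\<\phi_\nu,1,1\>\!\>_{0,3}=\<\!\<A,B,1\>\!\>_{0,3}$ for all $A,B$.

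Now I would apply the WDVV Lemma. The contractions for the input tuples $\left(\frac{\varphi}{1-xL},1,\frac{\psi}{1-yL},1\right)$ and $\left(\frac{\varphi}{1-xL},\frac{\psi}{1-yL},1,1\right)$ differ only by a transposition of two of the four slots, so the Lemma equates them. By the computations above, the first contraction equals $\frac{1}{(1-x)(1-y)}$ times the right-hand side of the proposition (using the symmetry of the pairing and of the two-point bracket to line up the two factors), while the second, by the collapse identity, equals $\<\!\<\frac{\varphi}{1-xL},\frac{\psi}{1-yL},1\>\!\>_{0,3}$. Substituting the two-point formula for this last bracket derived in the previous paragraph and cancelling the common factor $\frac{1}{(1-x)(1-y)}$ yields exactly the stated identity.

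The step I expect to be the main obstacle is the careful bookkeeping of the unstable $d=0$ term $\chi\!\left(X,P|_{L=1}\otimes Q|_{L=1}\right)$ in the string equation: this is precisely where the classical pairings $(\varphi,\psi)$ and $(\varphi,\phi_\alpha)$ enter, and it must be supplied by hand since the forgetful-map proof only applies once the image moduli space is nonempty. A secondary point is to verify that all the manipulations — the two uses of the string equation, the resummation of the geometric series in $x$ and $y$, and the appeal to the WDVV Lemma for the $L$-dependent inputs $\frac{\varphi}{1-xL}$ (which one reads off coefficient-by-coefficient in $x$ from the case of polynomial inputs) — are legitimate as identities of formal power series in $x$, $y$ and the Novikov variables. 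Permutation equivariance causes no extra difficulty here, since all the explicitly inserted classes occupy $1$-cycles and the $\tau_r$ contribute no $D$-corrections.
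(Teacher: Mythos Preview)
Your proposal is correct and follows essentially the same route as the paper: apply the WDVV lemma with inputs $1,1,\frac{\varphi}{1-xL},\frac{\psi}{1-yL}$, use the string equation (together with $D\!\left(\frac{\varphi}{1-xL}\right)=\frac{x}{1-x}\cdot\frac{\varphi}{1-xL}$) to rewrite each three-point bracket, collapse the $\<\!\<1,1,\phi_\alpha\>\!\>_{0,3}$ side via $\<\!\<1,1,\phi_\alpha\>\!\>_{0,3}=G_{1\alpha}$, and cancel the common factor $\frac{1}{(1-x)(1-y)}$. Your explicit tracking of the unstable $d=0$ contribution is exactly what the paper does implicitly when it writes the pairing terms $\frac{(\varphi,\phi_\alpha)}{1-q}$ and $\frac{(\psi,\varphi)}{(1-x)(1-y)}$ in its string-equation identities.
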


Here $\frac1{1-qL}$ is interpreted as follows.
There is a Laurent polynomial $P$ with zeros at roots of unities that annihilates $L$.
So $\Phi(q_1,q_2) = \frac{P(q_1)-P(q_2)}{q_1-q_2}$ is a Laurent polynomial, and we interpret $\frac1{1-qL}$ as the Laurent polynomial $\frac{\Phi(q^{-1},L)}{q \cdot P(q^{-1})}$ in $L$.

\begin{proof}
By Lemma 1.1, we have
\[\begin{array}{ll}
&\dsum_{\alpha,\beta} \left<\!\left<1, 1, \phi_\alpha\right>\!\right>_{0,3}G^{\alpha\beta}\left<\!\left< \phi_\beta, \dfrac{\psi}{1-xL},\dfrac{\varphi}{1-yL}\right>\!\right>_{0,3}\\
=&\dsum_{\alpha,\beta} \left<\!\left<1, \dfrac{\varphi}{1-xL}, \phi_\alpha\right>\!\right>_{0,3}G^{\alpha\beta}\left<\!\left< \phi_\beta, \dfrac{\psi}{1-yL},1\right>\!\right>_{0,3}.
\end{array}\]

By the string equation, we have
\[\begin{array}{ll}
\left<\!\left<1, \dfrac{\varphi}{1-qL}, \phi_\alpha\right>\!\right>_{0,3} & = \dfrac{(\varphi,\phi_\alpha)}{1-q} + (1+\dfrac{q}{1-q})\left<\!\left<\dfrac{\varphi}{1-qL}, \phi_\alpha\right>\!\right>_{0,2};\\
\left<\!\left<1, 1, \phi_\alpha\right>\!\right>_{0,3} & = (1,\phi_\alpha) + \<\!\<1,\phi_\alpha\>\!\>_{0,2};\\
\left<\!\left< 1, \dfrac{\psi}{1-xL},\dfrac{\varphi}{1-yL}\right>\!\right>_{0,3}&=\dfrac{(\psi,\varphi)}{(1-x)(1-y)} + (1+\dfrac{x}{1-x}+\dfrac{y}{1-y})\left<\!\left< \dfrac{\psi}{1-xL},\dfrac{\varphi}{1-yL}\right>\!\right>_{0,2}.
\end{array}\]

So the right hand of the equation is $\frac1{(1-x)(1-y)}$ that of our claim, while the left-hand side is
\[\left<\!\left< 1, \dfrac{\psi}{1-xL},\dfrac{\varphi}{1-yL}\right>\!\right>_{0,3},\]
which is also $\frac1{(1-x)(1-y)}$ that of our claim.
\end{proof}

\subsection{Consequences of WDVV equation}
\

By Corollary 3 in \cite{WDVV}, Proposition 1.3 together with the string equation implies that the metric $G_{\alpha\beta}$ is flat when viewed as a metric on the space of $\tau_1$ (i.e., with $\tau_2, \cdots$ fixed).

Define $\mathcal{K}$ as the space of $K$-valued rational functions with poles only at $0,\infty$, and roots of unities.
Put on $\mathcal{K}$ two Symplectic forms 
\[\begin{array}{ll}
\Omega(f,g) &= (Res_0+Res_\infty)(f(q),g(q^{-1}))\frac{dq}q,\\
\bar{\Omega}(f,g) &= (Res_0+Res_\infty) G(f(q),g(q^{-1}))\frac{dq}q,
\end{array}\] 
where $G$ is the inner product $\sum_{\alpha,\beta}G^{\alpha\beta} \phi_\alpha \otimes \phi_\beta$ on $K$, extended to $\mathcal{K}\subseteq K \otimes \mathbb{Q}((q))$ in a $\mathbb{Q}((q))$-linear way.
Then $\mathcal{K}_+ = K[q^{\pm}], \mathcal{K}_-=\{f:f(\infty)=0,f(0) \neq \infty\}$ is a Lagrangian polarization of both $(\mathcal{K},\Omega)$ and $(\mathcal{K},\bar{\Omega})$ \cite{perm9}.
Set 
\[S(q)\phi = \sum_{\alpha,\beta} \left((\phi,\phi_\alpha)+\left<\!\left< \frac{\phi}{1-L/q}, \phi_\alpha\right>\!\right>_{0,2}\right)G^{\alpha\beta}\phi_\beta.\]
Then $S(q):K \to \bar{\mathcal{K}}_-$ extends $\mathbb{Q}((q))$-linearly to a map $\mathcal{K} \to \bar{\mathcal{K}}$, which we also denote by $S(q)$.
Proposition 1.4 implies that $S(q)$ is Symplectic \cite{perm7}.

\section{Ancestor-Descendant correspondence}
\

\subsection{Statement of results}
\

Given a function $\mathcal{F}$ with inputs $\tau=(\tau_1,\tau_2,\cdots) \in K^\infty$, recall that $R_r$ is defined by
\[(R_r\mathcal{F})(\tau_1, \cdots, \tau_k,\cdots) = \mathcal{F}(\tau_r, \cdots, \tau_{kr},\cdots).\]
Define a metric $G_r=R_r(G)$ for each $r$.
Note that the correlators $\<\!\<\phi_\alpha,\phi_\beta\>\!\>_{0,2}$ in each entry $G_{\alpha\beta}$ in the matrix $G$ is a function in $\tau=(\tau_1,\cdots)$, and $R_r$ act on these functions.
By the flatness of $G$, each $G_r$ is a flat metric on the space of $\tau_r$.

Let $\mathcal{K}_r = \bar{\mathcal{K}}_r = \mathcal{K}$, with Symplectic forms $\Omega_r = \frac{\Psi^r}r \Omega, \bar{\Omega}_r = \frac{\Psi^r}r R_r\bar{\Omega}$, and Lagrangian polarizations $(\mathcal{K}_{r})_\pm = (\bar{\mathcal{K}}_{r})_\pm = \mathcal{K}_\pm \subseteq \mathcal{K}$.
Note that
\[\begin{array}{ll}
\Omega_r(f,g) &= (Res_0 + Res_\infty) \Psi^r(f(q^{-1}),g(q)) \frac{dq}q\\
\bar{\Omega}_r(f,g) &= (Res_0 + Res_\infty) \Psi^r(G_r(f(q^{-1}),g(q))) \frac{dq}q
\end{array}\]
Define Symplectic spaces and polarizations $(\mathcal{K}^\infty= \mathcal{K}^\infty_+ \oplus \mathcal{K}^\infty_-, \Omega^\infty)$ and $(\bar{\mathcal{K}}^\infty= \bar{\mathcal{K}}^\infty_+ \oplus \bar{\mathcal{K}}^\infty_-, \bar{\Omega}^\infty)$ as the direct sum of all of the components above.
Set dilaton vectors $v=\bar{v}=((1-q)1,\cdots,(1-q)1,\cdots)$.
Define a map $\S_r (q) = R_r(S(q)): \mathcal{K}_r \to \bar{\mathcal{K}}_r$ for each $r$, and $\S=\prod_{r=1}^\infty \S_r: \mathcal{K}^\infty \to \bar{\mathcal{K}}^\infty$.
Then $\S_r$'s are Symplectic, and so is $\S$.

Given $\mathbf{t}=(\mathbf{t}_1, \cdots,\mathbf{t}_k, \cdots) \in \mathcal{K}^\infty_+$ and $\bar{\mathbf{t}}=(\bar{\mathbf{t}}_1, \cdots) \in \bar{\mathcal{K}}^\infty_+$, recall that we defined generating functions 
\[\begin{array}{ll}
\mathcal{F}_g(\mathbf{t}) &= \dsum_{\ell,d}\frac{Q^d}{\ell!}\left<\mathbf{t}_1, \cdots, \mathbf{t}_1; \cdots , \mathbf{t}_r, \cdots \right>_{g,\ell,d},\\
\mathcal{D}(\mathbf{t}+v) &= \exp \dsum_{g \geq 0, k>0} \hbar^{k(g-1)}\frac{\Psi^k}k(R_k\mathcal{F}_g)(\t);\\
\bar{\mathcal{F}}_g(\bar{\mathbf{t}}) &= \dsum_{d,\ell,\bar{\ell}} \frac{Q^d}{\ell!\bar{\ell}!}\left<\bar{\mathbf{t}}_1(\bar{L}), \cdots, \bar{\mathbf{t}}_1(\bar{L}), \tau_1,\cdots,\tau_1;\cdots, \bar{\mathbf{t}}_r(\bar{L}), \cdots, \tau_r,\cdots \right>_{g,\ell+\bar{\ell},d} \\ 
\mathcal{A}_\tau(\bar{\mathbf{t}}+\bar{v}) &= \exp \dsum_{g \geq 0, k>0} \hbar^{k(g-1)}\frac{\Psi^k}k(R_k\bar{\mathcal{F}}_g)(\bar{\t}),
\end{array}\]
where $\bar{L}_i$ is the pull-back of the universal cotangent bundle at the corresponding marked point on $\bar{\mathcal{M}}_{g,\ell}$, along the forgetful map $\M_{g,\ell+\bar{\ell}}(X,d) \to \bar{\mathcal{M}}_{g,\ell}$.
Here in the definition of $\mathcal{A}$ and $\mathcal{D}$, $R_k$'s shift inputs $\t_r$, $\bar{\t}_r$ and $\tau_r$'s.

In this section, we prove the following theorem.

\begin{customthm}{1}
\[\mathcal{D}(\mathbf{x})=\exp \left[\sum_k\frac{\Psi^k}k R_k (\left<\!\left< \ \right>\!\right>_{1,0} + \left<\!\left<\mathbf{x}_{2k}-\tau_{2k}+q-1\right>\!\right>_{0,1_2}/\hbar)\right] (\hat{\S}^{-1} \mathcal{A})(\mathbf{x}),\] where $R_k$'s shift $\tau$'s but not $\mathbf{x}_{2k}-\tau_{2k}+q-1$'s.
\end{customthm}
We think of $\x$ as $\t+v+\tau$.

\subsection{Comparing $\bar{L}$ to $L$}
\

Recall that given inputs $A_{1,1},\cdots, A_{1,\ell_1}, \cdots, A_{r,k}, \cdots \in K[L^{\pm}]$ or $K[\bar{L}^{\pm}]$, with each $A_{r,k}$ as an input for an $r$-cycles
\[\begin{array}{ll}
&\left<\!\left<A_{1,1},\cdots, A_{1,\ell_1}; \cdots, A_{r,k}, \cdots \right>\!\right>_{g,\ell} \\
= &\dsum_{\bar{\ell},d} \frac{Q^d}{\bar{\ell}!} \cdot (\prod_r{r^{\ell_r}}) \left<A_{1,1}, \cdots, \tau_1,\cdots;\cdots, A_{r,k} \cdots, \tau_r, \cdots\right>_{g,\ell+\bar{\ell},d},\\
\end{array}\]
where $\ell,\bar{\ell}$ remembers the number of inputs $A$'s and $\tau$'s for each cycle length.

In this Subsection, we show that 
\begin{prop}
For stable $g,\ell$'s we have
\[\left<\!\left<\mathbf{t}_{1,1}(L), \cdots, \mathbf{t}_{1,\ell_1}(L); \cdots, \mathbf{t}_{r,k}(L), \cdots \right>\!\right>_{g,\ell} = 
\left<\!\left<\bar{\mathbf{t}}_{1,1}(\bar{L}), \cdots, \bar{\mathbf{t}}_{1,\ell_1}(\bar{L}); \cdots, \bar{\mathbf{t}}_{r,k}(\bar{L}), \cdots\right>\!\right>_{g,\ell},\]
where $\bar{\mathbf{t}}_{r,k}(q) = [\S_r(q)\mathbf{t}_{r,k}(q)]_+$ and $[\cdots]_+$ stands for taking positive part under the polarization $\bar{\mathcal{K}} = \bar{\mathcal{K}}_+ \oplus \bar{\mathcal{K}}_-$.
\end{prop}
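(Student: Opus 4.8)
The plan is to compare the ancestor insertions $\bar{L}$ with the descendant insertions $L$ by analyzing the forgetful map $\pi\colon \M_{g,\ell+\bar\ell}(X,d)\to\bar{\mathcal M}_{g,\ell}$ that drops the $\tau$-marked points (and the map to $X$), and stabilizes. The starting point is the standard comparison $L_i = \bar L_i\otimes\mathcal O(\mathcal D_i)$ where $\mathcal D_i$ is the locus on which the component carrying the $i$-th $A$-point gets contracted by $\pi$; such a stratum is itself a fibered product of moduli spaces glued along evaluation maps at nodes, so inserting $L_i$ in place of $\bar L_i$ introduces correction terms supported on these boundary strata. Organizing all of these corrections is exactly what the operator $S(q)$ encodes: the building block $\bar{\t}_{r,k}(q)=[\S_r(q)\t_{r,k}(q)]_+$ is designed so that summing the geometric series of boundary contributions at the $(r,k)$-th insertion replaces $\t_{r,k}(L)$ by $\bar{\t}_{r,k}(\bar L)$.

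The key steps, in order, are as follows. First I would treat a single insertion: fix one $A$-point (say in a $1$-cycle) and show, using the string and dilaton equations (Propositions 1.1, 1.2) together with the WDVV identity in the form of Proposition 1.4, that $\<\!\<\t(L),\ldots\>\!\>$ equals $\<\!\<[\S(q)\t(q)]_+(\bar L),\ldots\>\!\>$ up to the boundary terms; here one expands $\t(q)/(1-qL)$ and uses that the three-point function with a $\tfrac{\varphi}{1-qL}$ insertion and two $\tau$-legs resums, via Proposition 1.4, into the operator $S$. Second, I would iterate over all $A$-insertions one at a time, checking that the corrections at distinct insertions live on disjoint (or transversally intersecting, but ultimately non-contributing) strata so that the single-insertion computation applies inductively and the product structure $\prod_r\prod_k$ appears. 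Third, I would handle the symmetry: for an $r$-cycle with $r>1$, the $r$ marked points in the cycle are permuted by $h$, so boundary corrections that break the cycle do not contribute to the super-trace — this is the same mechanism used in the proof of the string equation — which forces the operator acting on the $(r,k)$-input to be $\S_r=R_r(S)$ rather than $S$ itself, and explains the appearance of $R_r$ and $\Psi^r$. Finally I would reconcile the combinatorial factors $\prod_r r^{\ell_r}$ and $\tfrac1{\bar\ell!}$ on both sides, which match once one accounts for the $r$-fold choices in distributing the $r$ legs of each cycle.

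The main obstacle I anticipate is the bookkeeping of the boundary corrections when several $A$-insertions are involved simultaneously: one must verify that strata contracting two or more of the $A$-carrying components either vanish or are already accounted for by the iterated single-insertion formula, and that nothing is double-counted. In the non-permutation-equivariant setting this is the classical ancestor-descendant argument of Kontsevich–Manin / Givental; here the extra subtlety is that the forgetful map remembers the $S_n$-action, and one must check the super-trace is compatible with the stratification at each stage — in particular that the Adams operations $\Psi^r$ enter precisely through the $h$-fixed loci, as in the cited genus-zero argument of \cite{perm7}. I would expect the stability hypothesis on $(g,\ell)$ to be used exactly to guarantee $\pi$ is defined and that $\pi_*1=1$ on its fibers (as in the string/dilaton proofs above), so the unstable cases must be set aside and dealt with separately, which is why they are excluded from the statement.
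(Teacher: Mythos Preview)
Your geometric starting point---the comparison $L_i=\bar L_i\otimes\mathcal O(D_i)$ with $D_i$ the locus where the component carrying the $i$-th point is contracted---matches the paper exactly. But the argument from there diverges, and I think your version has a real gap in the $r>1$ case.

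The paper does \emph{not} invoke the string, dilaton, or WDVV equations for this proposition. Instead it runs a direct recursion on powers of $L$: writing $\phi L^a\bar L^b = \phi L^{a-1}\bar L^{b+1} + \phi L^a\bar L^b(1-\prod_i \bar L_i/L_i)$ for the $r$ points in one cycle taken \emph{together}, and then expressing $\mathcal O^{vir}\cdot(1-\prod_{i=1}^r \bar L_i/L_i)$ as an alternating sum of virtual structural sheaves of strata (Lemma 2.1, Corollary 2.1). Each such stratum is a fibered product $Z=Z_1\times_{\Delta^r}Z_2$ where $Z_1$ is the ``main'' curve and $Z_2$ consists of $r$ identical chains of rational components, cyclically permuted by $h$. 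The heart of the proof is Lemma 2.2, a K\"unneth computation for $str_h H^\ast(Z_1\times_{\Delta^r}Z_2,V_1\boxtimes V_2)$: it factors the super-trace as a product over $Z_1$ and $Z_2$ glued via $G_r^{\alpha\beta}$, and then the $Z_2$ factor---an $r$-th tensor power permuted cyclically---yields $\Psi^r R_r(\<\!\<\phi_\beta,\phi L^a\>\!\>_{0,2})$. Solving the resulting recursion in $a$ gives $\<\phi L^a|_r=\<[\S_r(\bar L)\phi\bar L^a]_+|_r$, and only then does one iterate over the different cycles.

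Your heuristic ``boundary corrections that break the cycle do not contribute to the super-trace'' is not quite the mechanism: it is not that asymmetric strata vanish, but that on the $h$-fixed locus the $r$ attached chains are forced to be identical copies, and the super-trace of a cyclic permutation on an $r$-fold tensor power is what produces $\Psi^r$. This is the step you would need to make precise, and I do not see how string/dilaton/WDVV would substitute for it. Those equations are used later (Section 2.3) to handle the unstable terms $(g,\ell)$ excluded here, not for the core $L$--$\bar L$ comparison.
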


For each $(C,u) \in \M_{g,n+\bar{n}}(X,d)$, there is a contraction map $ct_C: C \to \bar{C}$ that forgets the last $\bar{n}$ marked points and stabilizes the resulting curve.
Let $D_i$ be the virtual divisor of $\M_{g,n+\bar{n}}(X,d)$, consisting of points such that the $i^{th}$ marked point on the base curve $C$ lies on a component to be contracted under $ct_C$.
Note that $D_i$'s have normal intersections and self-intersections.
By Section 3 in \cite{WDVV}, we have

\begin{lemma}
The sheaf $\mathcal{O}(-D_i)$ is the push forward of virtual structural sheaves of the following components.
\end{lemma}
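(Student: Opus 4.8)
The plan is to stratify the virtual divisor $D_i$ by the combinatorial type of the rational bubble tree carrying the $i$-th marked point, and then to apply the resolution of the structure sheaf of a normal-crossing virtual divisor established in Section 3 of \cite{WDVV}.

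First I would describe $D_i$ together with its strata. A point of $D_i$ is a stable map $(C,u)$ possessing a connected genus-$0$ subcurve $C'\subseteq C$ --- a tree of $\mathbb{P}^1$'s --- that carries the $i$-th marked point, meets $\overline{C\setminus C'}$ in a single node, and whose remaining marked points all lie among the last $\bar n$ forgotten ones, the complementary ``core'' carrying the genus $g$, the other $n-1$ surviving marked points, and enough structure to remain stable after contraction. Stratifying by the dual graph of $C'$, the locus where $C'$ is a single $\mathbb{P}^1$ is dense in $D_i$, and its closure is the image of the gluing map from $\M_{g,\cdots+1_\bullet}(X,d_0)\times_\Delta\M_{0,1_\bullet+1_i+\cdots}(X,d_1)$, where $\Delta\subseteq X\times X$ is the diagonal, the fibered product is taken over the evaluation maps at the gluing node $\bullet$, the $\bar n$ forgotten points are distributed among the two factors (with the $\cdots$ standing for the remaining surviving and forgotten points), and $d_0+d_1=d$. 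The deeper strata are images of iterated fibered products $\M_{g,\cdots}\times_\Delta\M_{0,\cdots}\times_\Delta\cdots\times_\Delta\M_{0,\cdots+1_i}(X,\cdots)$ corresponding to bubble trees with more components; these are precisely the loci along which $D_i$ fails to be normal, and $D_i$ meets them transversally.

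Next I would invoke two structural facts, both contained in Section 3 of \cite{WDVV}. (i) $D_i$ is a normal-crossing virtual divisor: near the strata above it is cut out by monomials in the smoothing parameters of the bubbling nodes, so its iterated self-intersections are exactly the codimension-$k$ strata just listed. (ii) The virtual structure sheaf is compatible with this stratification: the restriction of $\mathcal{O}^{vir}$ of $\M_{g,n+\bar n}(X,d)$ to any such stratum is the pushforward, along the corresponding gluing map, of the external product of the virtual structure sheaves of the factors --- this is the splitting property of Lee's virtual structure sheaf \cite{Lee01}, the same input used in the proof of the WDVV equation above. Granting (i) and (ii), the Koszul (inclusion--exclusion) resolution of $\mathcal{O}_{D_i}$ expresses it as the alternating sum, over $k\geq 1$, of $(-1)^{k-1}$ times the pushforward of the virtual structure sheaf of the $k$-fold stratum, and hence $\mathcal{O}(-D_i)=\mathcal{O}-\mathcal{O}_{D_i}$ becomes the alternating sum over $k\geq 0$ of those pushforwards, with the $k=0$ term equal to the structure sheaf of $\M_{g,n+\bar n}(X,d)$ itself. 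This is exactly the collection of components in the statement.

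The hard part is not the bookkeeping, which is then formal, but establishing (i) and (ii) in the form needed here: that the bubbling-off locus really is normal-crossing as a \emph{virtual} divisor, that the virtual structure sheaves glue as asserted under the gluing maps, and that keeping track of which forgotten points sit on which bubble, and of the $S_n$-symmetry of the surviving marked points, causes no trouble. All of this is carried out in \cite{WDVV}; what remains to be observed is that nothing there relies on $\bar n = 1$, so the argument goes through after summing over the ways the forgotten points can be placed on the bubble tree.
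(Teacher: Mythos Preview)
The paper gives no proof of this lemma beyond the sentence ``By Section 3 in \cite{WDVV}, we have'' preceding the statement; your proposal is a correct unpacking of exactly that reference, and the ingredients you name --- normal-crossing structure of the bubbling divisor at the virtual level, compatibility of virtual structure sheaves with gluing, and the inclusion--exclusion resolution of $\mathcal{O}_{D_i}$ --- are precisely those used there.

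One small clarification worth making: when you write that the deeper strata correspond to ``bubble trees with more components,'' the self-intersection strata of the \emph{single} divisor $D_i$ are more specifically \emph{chains}, as in the paper's formula $X_{0,2}\times_\Delta\cdots\times_\Delta X_{0,2}\times_\Delta X_{g,n}$. The $k$-fold self-intersection of $D_i$ is the locus where there are at least $k$ separating nodes between the $i$-th marked point and the stable core, and these nodes are linearly ordered along the unique path from $i$ to the core. Each factor $X_{0,2}$ is itself a full moduli space (so its points can be reducible), but the inclusion--exclusion is indexed by the length of this chain, not by an arbitrary tree type. Your argument goes through once this is said; the general trees appear only later, in Corollary~2.1, when one takes the product $\prod_{i=1}^r(1-\bar L_i/L_i)$ over several marked points.
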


\tikzset{every picture/.style={line width=0.75pt}}
\begin{tikzpicture}[x=0.75pt,y=0.75pt,yscale=-1,xscale=1]
\draw    (76.33,48) -- (76.33,132) ;
\draw    (194.33,50) -- (195.33,100) ;
\draw    (186.33,84) -- (230.33,130) ;
\draw  [color={rgb, 255:red, 208; green, 2; blue, 27 }  ,draw opacity=1 ][line width=3]  (72.99,62.03) .. controls (72.99,60.37) and (74.33,59.03) .. (75.99,59.03) .. controls (77.65,59.03) and (78.99,60.37) .. (78.99,62.03) .. controls (78.99,63.68) and (77.65,65.03) .. (75.99,65.03) .. controls (74.33,65.03) and (72.99,63.68) .. (72.99,62.03) -- cycle ;
\draw  [color={rgb, 255:red, 208; green, 2; blue, 27 }  ,draw opacity=1 ][line width=3]  (191.99,63.03) .. controls (191.99,61.37) and (193.33,60.03) .. (194.99,60.03) .. controls (196.65,60.03) and (197.99,61.37) .. (197.99,63.03) .. controls (197.99,64.68) and (196.65,66.03) .. (194.99,66.03) .. controls (193.33,66.03) and (191.99,64.68) .. (191.99,63.03) -- cycle ;
\draw    (68,123) .. controls (99.33,105) and (121.33,137) .. (143.33,119) ;
\draw    (211,123) .. controls (234.33,108) and (264.33,136) .. (281.33,120) ;
\draw    (314.33,50) -- (315.01,97.02) ;
\draw    (309.01,76.02) -- (332.33,120) ;
\draw  [color={rgb, 255:red, 208; green, 2; blue, 27 }  ,draw opacity=1 ][line width=3]  (311.99,63.03) .. controls (311.99,61.37) and (313.33,60.03) .. (314.99,60.03) .. controls (316.65,60.03) and (317.99,61.37) .. (317.99,63.03) .. controls (317.99,64.68) and (316.65,66.03) .. (314.99,66.03) .. controls (313.33,66.03) and (311.99,64.68) .. (311.99,63.03) -- cycle ;
\draw    (340,124) .. controls (363.33,109) and (387.33,136) .. (404.33,120) ;
\draw    (318.33,106) -- (357.01,126.02) ;
\draw (57.99,54.43) node [anchor=north west][inner sep=0.75pt]  [rotate=-359.99]  {$i$};
\draw (444,77.4) node [anchor=north west][inner sep=0.75pt]    {$\cdots $};
\draw (206,159) node [anchor=north west][inner sep=0.75pt]   [align=left] {$\displaystyle \mathcal{O}( -D_{i})$};
\draw (176.99,52.43) node [anchor=north west][inner sep=0.75pt]  [rotate=-359.99]  {$i$};
\draw (137,74.4) node [anchor=north west][inner sep=0.75pt]    {$-$};
\draw (296.99,52.43) node [anchor=north west][inner sep=0.75pt]  [rotate=-359.99]  {$i$};
\draw (263,75.4) node [anchor=north west][inner sep=0.75pt]    {$+$};
\draw (412,77.4) node [anchor=north west][inner sep=0.75pt]    {$-$};

\end{tikzpicture}

In these pictures, line segments stand for rational components.
Define the depth of the $i^{th}$ marked point as the order of self-intersection, i.e., the total number of line segments in the picture.
In terms of a formula, these components are
\[-\dsum_{k\geq 1} (-1)^k \underbrace{X_{0,2} \times_{\Delta} \cdots \times_{\Delta} X_{0,2}}_{k} \times_{\Delta} X_{g,n},\]
where $X_{g,n}$ stands for the moduli space of stable maps from genus $g$ curves with $n$ distinguished marked points (with no restrictions put on the number of marked points that carry inputs $\tau$'s, and the degree), and $\Delta$ the diagonal in $X \times X$.

The virtually transverse section of $\bar{L}_i/L_i$ defined by the forgetful map has zero locus $D_i$, so $\bar{L}_i/L_i = \mathcal{O}(-D_i)$.
Apply Proposition 2.1 to all $i=1,\cdots,r$, and we have the following result.

\begin{cor}
The virtual sheaf $\mathcal{O}^{vir}_{\M_{g,n+\bar{n}}(X,d)} \cdot (1-\prod_{i=1}^r (\bar{L}_i/L_i))$ on $\M_{g,n+\bar{n}}(X,d)$ is the push forward of alternating sums of structural sheaves of components illustrated by the graph below, with signs $-(-1)^{\sum_i m_i}$, where $m_i$ is the depth of the $i^{th}$ marked point.
\end{cor}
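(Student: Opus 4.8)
The plan is to deduce the Corollary from Lemma~2.1 by purely $K$-theoretic bookkeeping, so that no new geometric input is required. First I would rewrite the class in question: since $\bar L_i/L_i=\mathcal{O}(-D_i)$,
\[\mathcal{O}^{vir}_{\M_{g,n+\bar n}(X,d)}\cdot\Big(1-\prod_{i=1}^r(\bar L_i/L_i)\Big)=\mathcal{O}^{vir}_{\M_{g,n+\bar n}(X,d)}\cdot\Big(1-\prod_{i=1}^r\mathcal{O}(-D_i)\Big).\]
Writing $\mathcal{O}(-D_i)=1-\mathcal{O}_{D_i}$, where I abbreviate by $\mathcal{O}_{D_i}$ the virtual structure sheaf of $D_i$, Lemma~2.1 identifies $\mathcal{O}_{D_i}$ with the push-forward of $-\sum_{m\ge1}(-1)^m\,(X_{0,2}\times_\Delta\cdots\times_\Delta X_{0,2})\times_\Delta X_{g,n}$; equivalently, the depth-$m$ configuration, in which a chain of $m$ rational bridges is grown at the $i$-th distinguished point, enters $\mathcal{O}_{D_i}$ with sign $(-1)^{m+1}$.

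Next I would expand the product by inclusion--exclusion,
\[1-\prod_{i=1}^r(1-\mathcal{O}_{D_i})=-\sum_{\varnothing\ne S\subseteq\{1,\dots,r\}}(-1)^{|S|}\prod_{i\in S}\mathcal{O}_{D_i},\]
multiply through by $\mathcal{O}^{vir}_{\M_{g,n+\bar n}(X,d)}$, and substitute the Lemma~2.1 expansion into each factor $\mathcal{O}_{D_i}$. Here I would use the normal- and self-crossing property of the divisors $D_i$ recorded just before Lemma~2.1 (Section~3 of \cite{WDVV}): virtual transversality guarantees that in $K^0(\M_{g,n+\bar n}(X,d))$ a product $\mathcal{O}^{vir}\cdot\prod_{i\in S}(\text{depth-}m_i\text{ configuration at }i)$ equals the push-forward of the virtual structure sheaf of the iterated fibered product of these strata over $\Delta$, with no higher Tor corrections, and that this fibered product is again one of the nodal configurations drawn in the statement --- the one obtained by simultaneously growing, for each $i\in S$, a chain of $m_i$ bridges at the $i$-th distinguished point.

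Finally I would collect signs. Reindexing the resulting double sum by depth vectors $\mathbf m=(m_1,\dots,m_r)$ with $m_i\ge0$ and support $S=\{i:m_i\ge1\}$ --- so that $S=\varnothing$, i.e.\ $\mathbf m=0$, is exactly the term cancelled by the subtracted constant $1$ --- the configuration with depths $\mathbf m$ occurs with coefficient
\[-(-1)^{|S|}\prod_{i\in S}(-1)^{m_i+1}=-(-1)^{|S|}(-1)^{|S|}(-1)^{\sum_i m_i}=-(-1)^{\sum_i m_i},\]
which is exactly the sign $-(-1)^{\sum_i m_i}$ asserted in the Corollary.

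The only substantive point, and hence the place where care is needed, is the transversality claim used in the second step: that the family $\{D_i\}_{i\le r}$ meets with virtual normal and self crossings, so that products of virtual structure sheaves of depth strata compute the virtual structure sheaf of their common intersection, and that this intersection is literally a superposition of the pictured chains. This is not purely formal --- it involves the virtual class, and includes the case where two distinguished points $i\ne j$ land on a common contracted component, so that the chains at $i$ and at $j$ share their innermost bridge --- but it is precisely the stratification worked out in Section~3 of \cite{WDVV}, so beyond matching the combinatorics to the pictures no new argument is needed.
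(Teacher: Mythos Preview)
Your proposal is correct and follows the same approach as the paper, which simply says ``Apply [Lemma~2.1] to all $i=1,\dots,r$'' before stating the corollary; you have just written out the inclusion--exclusion and sign bookkeeping that the paper leaves implicit. The transversality input you flag is exactly the normal-crossings property the paper records immediately before Lemma~2.1, so no additional argument is needed.
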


\tikzset{every picture/.style={line width=0.75pt}}

\begin{tikzpicture}[x=0.75pt,y=0.75pt,yscale=-1,xscale=1]
\draw    (14.33,123) .. controls (67.33,80) and (102.39,86.83) .. (134.86,110.92) .. controls (167.33,135) and (180.33,142) .. (222.33,112) ;
\draw  [color={rgb, 255:red, 0; green, 0; blue, 0 }  ,draw opacity=1 ][line width=3.75]  (25.24,112.83) .. controls (25.24,111.5) and (26.7,110.42) .. (28.49,110.42) .. controls (30.29,110.42) and (31.74,111.5) .. (31.74,112.83) .. controls (31.74,114.16) and (30.29,115.23) .. (28.49,115.23) .. controls (26.7,115.23) and (25.24,114.16) .. (25.24,112.83) -- cycle ;
\draw    (31,67) -- (56.33,111) ;
\draw    (185,59) -- (210.33,103) ;
\draw    (64,11) -- (89.33,55) ;
\draw    (75,60) -- (100.33,104) ;
\draw    (35.33,28) -- (35.33,86) ;
\draw    (82,25) -- (82,83) ;
\draw    (203.33,74) -- (203.33,132) ;
\draw  [color={rgb, 255:red, 208; green, 2; blue, 27 }  ,draw opacity=1 ][line width=3.75]  (188.24,69.83) .. controls (188.24,68.5) and (189.7,67.42) .. (191.49,67.42) .. controls (193.29,67.42) and (194.74,68.5) .. (194.74,69.83) .. controls (194.74,71.16) and (193.29,72.23) .. (191.49,72.23) .. controls (189.7,72.23) and (188.24,71.16) .. (188.24,69.83) -- cycle ;
\draw  [color={rgb, 255:red, 208; green, 2; blue, 27 }  ,draw opacity=1 ][line width=3.75]  (67.24,22.83) .. controls (67.24,21.5) and (68.7,20.42) .. (70.49,20.42) .. controls (72.29,20.42) and (73.74,21.5) .. (73.74,22.83) .. controls (73.74,24.16) and (72.29,25.23) .. (70.49,25.23) .. controls (68.7,25.23) and (67.24,24.16) .. (67.24,22.83) -- cycle ;
\draw  [color={rgb, 255:red, 0; green, 0; blue, 0 }  ,draw opacity=1 ][line width=3.75]  (157.24,125.83) .. controls (157.24,124.5) and (158.7,123.42) .. (160.49,123.42) .. controls (162.29,123.42) and (163.74,124.5) .. (163.74,125.83) .. controls (163.74,127.16) and (162.29,128.23) .. (160.49,128.23) .. controls (158.7,128.23) and (157.24,127.16) .. (157.24,125.83) -- cycle ;
\draw  [color={rgb, 255:red, 208; green, 2; blue, 27 }  ,draw opacity=1 ][line width=3.75]  (33.24,40.83) .. controls (33.24,39.5) and (34.7,38.42) .. (36.49,38.42) .. controls (38.29,38.42) and (39.74,39.5) .. (39.74,40.83) .. controls (39.74,42.16) and (38.29,43.23) .. (36.49,43.23) .. controls (34.7,43.23) and (33.24,42.16) .. (33.24,40.83) -- cycle ;
\draw  [color={rgb, 255:red, 208; green, 2; blue, 27 }  ,draw opacity=1 ][line width=3.75]  (125.24,106.83) .. controls (125.24,105.5) and (126.7,104.42) .. (128.49,104.42) .. controls (130.29,104.42) and (131.74,105.5) .. (131.74,106.83) .. controls (131.74,108.16) and (130.29,109.23) .. (128.49,109.23) .. controls (126.7,109.23) and (125.24,108.16) .. (125.24,106.83) -- cycle ;

\draw (15,29.4) node [anchor=north west][inner sep=0.75pt]    {$1$};
\draw (49,17.4) node [anchor=north west][inner sep=0.75pt]    {$2$};
\draw (125,79.4) node [anchor=north west][inner sep=0.75pt]    {$3$};
\draw (200,47.4) node [anchor=north west][inner sep=0.75pt]    {$r$};
\draw (157,88.4) node [anchor=north west][inner sep=0.75pt]    {$\cdots $};

\end{tikzpicture}

Colors when printed in black and white:

\begin{tikzpicture}[x=0.75pt,y=0.75pt,yscale=-1,xscale=1]
\draw  [line width=3.75]  (23,21.67) .. controls (23,20.19) and (24.19,19) .. (25.67,19) .. controls (27.14,19) and (28.33,20.19) .. (28.33,21.67) .. controls (28.33,23.14) and (27.14,24.33) .. (25.67,24.33) .. controls (24.19,24.33) and (23,23.14) .. (23,21.67) -- cycle ;
\draw  [color={rgb, 255:red, 208; green, 2; blue, 27 }  ,draw opacity=1 ][fill={rgb, 255:red, 255; green, 255; blue, 255 }  ,fill opacity=1 ][line width=3.75]  (113,21.33) .. controls (113,19.86) and (114.19,18.67) .. (115.67,18.67) .. controls (117.14,18.67) and (118.33,19.86) .. (118.33,21.33) .. controls (118.33,22.81) and (117.14,24) .. (115.67,24) .. controls (114.19,24) and (113,22.81) .. (113,21.33) -- cycle ;

\draw (43,12.33) node [anchor=north west][inner sep=0.75pt]   [align=left] {black};
\draw (133,12) node [anchor=north west][inner sep=0.75pt]   [align=left] {red};
\end{tikzpicture}

Next, we consider the $h$ action that cyclically permutes the red marked points labeled by $1,\cdots,r$.
All such $h$'s and the strata they fix are as follows, with the $r$ attached components identical and cyclically permuted by $h$.

\tikzset{every picture/.style={line width=0.75pt}}
\begin{tikzpicture}[x=0.75pt,y=0.75pt,yscale=-1,xscale=1]
\draw    (14.33,176) .. controls (67.33,133) and (102.39,139.83) .. (134.86,163.92) .. controls (167.33,188) and (180.33,195) .. (222.33,165) ;
\draw  [color={rgb, 255:red, 0; green, 0; blue, 0 }  ,draw opacity=1 ][line width=3.75]  (25.24,165.83) .. controls (25.24,164.5) and (26.7,163.42) .. (28.49,163.42) .. controls (30.29,163.42) and (31.74,164.5) .. (31.74,165.83) .. controls (31.74,167.16) and (30.29,168.23) .. (28.49,168.23) .. controls (26.7,168.23) and (25.24,167.16) .. (25.24,165.83) -- cycle ;
\draw    (64,64) -- (89.33,108) ;
\draw    (75,113) -- (100.33,157) ;
\draw    (82,78) -- (82,136) ;
\draw  [color={rgb, 255:red, 208; green, 2; blue, 27 }  ,draw opacity=1 ][line width=3.75]  (67.24,75.83) .. controls (67.24,74.5) and (68.7,73.42) .. (70.49,73.42) .. controls (72.29,73.42) and (73.74,74.5) .. (73.74,75.83) .. controls (73.74,77.16) and (72.29,78.23) .. (70.49,78.23) .. controls (68.7,78.23) and (67.24,77.16) .. (67.24,75.83) -- cycle ;
\draw    (26,68) -- (51.33,112) ;
\draw    (37,117) -- (62.33,161) ;
\draw    (44,82) -- (44,140) ;
\draw  [color={rgb, 255:red, 208; green, 2; blue, 27 }  ,draw opacity=1 ][line width=3.75]  (29.24,79.83) .. controls (29.24,78.5) and (30.7,77.42) .. (32.49,77.42) .. controls (34.29,77.42) and (35.74,78.5) .. (35.74,79.83) .. controls (35.74,81.16) and (34.29,82.23) .. (32.49,82.23) .. controls (30.7,82.23) and (29.24,81.16) .. (29.24,79.83) -- cycle ;
\draw    (157,98) -- (182.33,142) ;
\draw    (168,147) -- (193.33,191) ;
\draw    (175,112) -- (175,170) ;
\draw  [color={rgb, 255:red, 208; green, 2; blue, 27 }  ,draw opacity=1 ][line width=3.75]  (160.24,109.83) .. controls (160.24,108.5) and (161.7,107.42) .. (163.49,107.42) .. controls (165.29,107.42) and (166.74,108.5) .. (166.74,109.83) .. controls (166.74,111.16) and (165.29,112.23) .. (163.49,112.23) .. controls (161.7,112.23) and (160.24,111.16) .. (160.24,109.83) -- cycle ;
\draw    (36,55) .. controls (50.64,47.36) and (62.25,47.93) .. (72.84,55.83) ;
\draw [shift={(74.33,57)}, rotate = 219.29] [color={rgb, 255:red, 0; green, 0; blue, 0 }  ][line width=0.75]    (10.93,-3.29) .. controls (6.95,-1.4) and (3.31,-0.3) .. (0,0) .. controls (3.31,0.3) and (6.95,1.4) .. (10.93,3.29)   ;
\draw    (74.33,57) .. controls (88.98,49.36) and (100.58,49.93) .. (111.18,57.83) ;
\draw [shift={(112.67,59)}, rotate = 219.29] [color={rgb, 255:red, 0; green, 0; blue, 0 }  ][line width=0.75]    (10.93,-3.29) .. controls (6.95,-1.4) and (3.31,-0.3) .. (0,0) .. controls (3.31,0.3) and (6.95,1.4) .. (10.93,3.29)   ;
\draw    (137,58) .. controls (151.64,50.36) and (163.25,50.93) .. (173.84,58.83) ;
\draw [shift={(175.33,60)}, rotate = 219.29] [color={rgb, 255:red, 0; green, 0; blue, 0 }  ][line width=0.75]    (10.93,-3.29) .. controls (6.95,-1.4) and (3.31,-0.3) .. (0,0) .. controls (3.31,0.3) and (6.95,1.4) .. (10.93,3.29)   ;
\draw    (175.33,51) .. controls (126.08,16.52) and (66.81,24.74) .. (37.33,45.06) ;
\draw [shift={(36,46)}, rotate = 324.09] [color={rgb, 255:red, 0; green, 0; blue, 0 }  ][line width=0.75]    (10.93,-3.29) .. controls (6.95,-1.4) and (3.31,-0.3) .. (0,0) .. controls (3.31,0.3) and (6.95,1.4) .. (10.93,3.29)   ;
\draw (13,74.4) node [anchor=north west][inner sep=0.75pt]    {$1$};
\draw (49,70.4) node [anchor=north west][inner sep=0.75pt]    {$2$};
\draw (175,93.4) node [anchor=north west][inner sep=0.75pt]    {$r$};
\draw (108,108.4) node [anchor=north west][inner sep=0.75pt]    {$\cdots $};
\draw (98,6.4) node [anchor=north west][inner sep=0.75pt]    {$h$};
\draw (109,53.4) node [anchor=north west][inner sep=0.75pt]    {$\cdots $};

\end{tikzpicture}

Following \cite{perm7}, we aim at eliminating $L$'s in the first (level $r$) input of correlators $\left<\phi L^a\bar{L}^b,\cdots\right>_{g,1_r+\ell+\bar{\ell},d}$.
The super-trace in the definitions of these correlators are
\[\begin{array}{ll}&str_h H^\ast (\M_{g,n+\bar{n}}(X,d), \mathcal{O}^{vir}_{\M_{g,n+\bar{n}}(X,d)} \otimes \dprod_{i=1}^r ev_i^\ast\phi L_i^a\bar{L}_i^b \otimes T) \\
=& str_h H^\ast (\M_{g,n+k}(X,d), \mathcal{O}^{vir}_{\M_{g,n+\bar{n}}(X,d)} \otimes \dprod_{i=1}^r ev_i^\ast\phi L_i^{a-1}\bar{L}_i^{b+1} \otimes T) \\
+& str_h H^\ast (\M_{g,n+k}(X,d), \mathcal{O}^{vir}_{\M_{g,n+\bar{n}}(X,d)} \otimes (\dprod_{i=1}^r ev_i^\ast\phi L_i^a\bar{L}_i^b)(1-\dprod_{i=1}^r \bar{L}_i/L_i) \otimes T),\end{array}\]
where $T$ is a certain virtual bundle on $\M_{g,n+k}(X,d)$ that keeps track of bundles pulled back from $X$ along evaluation maps.
Corollary 2.1 expresses the second term on the right-hand side as $\sum_Z  str_h H^\ast (Z, - \mathcal{O}_{Z}^{vir} \otimes (-1)^{mr} T|_Z)$, where $m$ is the depth of distinguished marked points, and $(-1)^{mr} = ((-1)^m)^{\otimes r}$ is permuted cyclically by $h$.
Thus in particular $str_h (-1)^{mr}=(-1)^m$ (not $(-1)^{mr}$).

\tikzset{every picture/.style={line width=0.75pt}}
\begin{tikzpicture}[x=0.75pt,y=0.75pt,yscale=-1,xscale=1]
\draw    (14.33,154) .. controls (67.33,111) and (102.39,117.83) .. (134.86,141.92) .. controls (167.33,166) and (180.33,173) .. (222.33,143) ;
\draw  [color={rgb, 255:red, 0; green, 0; blue, 0 }  ,draw opacity=1 ][line width=3.75]  (25.24,143.83) .. controls (25.24,142.5) and (26.7,141.42) .. (28.49,141.42) .. controls (30.29,141.42) and (31.74,142.5) .. (31.74,143.83) .. controls (31.74,145.16) and (30.29,146.23) .. (28.49,146.23) .. controls (26.7,146.23) and (25.24,145.16) .. (25.24,143.83) -- cycle ;
\draw    (64,42) -- (89.33,86) ;
\draw    (75,91) -- (100.33,135) ;
\draw    (82,56) -- (82,114) ;
\draw  [color={rgb, 255:red, 208; green, 2; blue, 27 }  ,draw opacity=1 ][line width=3.75]  (67.24,53.83) .. controls (67.24,52.5) and (68.7,51.42) .. (70.49,51.42) .. controls (72.29,51.42) and (73.74,52.5) .. (73.74,53.83) .. controls (73.74,55.16) and (72.29,56.23) .. (70.49,56.23) .. controls (68.7,56.23) and (67.24,55.16) .. (67.24,53.83) -- cycle ;
\draw    (26,46) -- (51.33,90) ;
\draw    (37,95) -- (62.33,139) ;
\draw    (44,60) -- (44,118) ;
\draw  [color={rgb, 255:red, 208; green, 2; blue, 27 }  ,draw opacity=1 ][line width=3.75]  (29.24,57.83) .. controls (29.24,56.5) and (30.7,55.42) .. (32.49,55.42) .. controls (34.29,55.42) and (35.74,56.5) .. (35.74,57.83) .. controls (35.74,59.16) and (34.29,60.23) .. (32.49,60.23) .. controls (30.7,60.23) and (29.24,59.16) .. (29.24,57.83) -- cycle ;
\draw    (157,76) -- (182.33,120) ;
\draw    (168,125) -- (193.33,169) ;
\draw    (175,90) -- (175,148) ;
\draw  [color={rgb, 255:red, 208; green, 2; blue, 27 }  ,draw opacity=1 ][line width=3.75]  (160.24,87.83) .. controls (160.24,86.5) and (161.7,85.42) .. (163.49,85.42) .. controls (165.29,85.42) and (166.74,86.5) .. (166.74,87.83) .. controls (166.74,89.16) and (165.29,90.23) .. (163.49,90.23) .. controls (161.7,90.23) and (160.24,89.16) .. (160.24,87.83) -- cycle ;
\draw  [color={rgb, 255:red, 74; green, 144; blue, 226 }  ,draw opacity=1 ][line width=3.75]  (52.24,129.83) .. controls (52.24,128.5) and (53.7,127.42) .. (55.49,127.42) .. controls (57.29,127.42) and (58.74,128.5) .. (58.74,129.83) .. controls (58.74,131.16) and (57.29,132.23) .. (55.49,132.23) .. controls (53.7,132.23) and (52.24,131.16) .. (52.24,129.83) -- cycle ;
\draw  [color={rgb, 255:red, 74; green, 144; blue, 226 }  ,draw opacity=1 ][line width=3.75]  (90.24,124.83) .. controls (90.24,123.5) and (91.7,122.42) .. (93.49,122.42) .. controls (95.29,122.42) and (96.74,123.5) .. (96.74,124.83) .. controls (96.74,126.16) and (95.29,127.23) .. (93.49,127.23) .. controls (91.7,127.23) and (90.24,126.16) .. (90.24,124.83) -- cycle ;
\draw  [color={rgb, 255:red, 74; green, 144; blue, 226 }  ,draw opacity=1 ][line width=3.75]  (185.24,159.83) .. controls (185.24,158.5) and (186.7,157.42) .. (188.49,157.42) .. controls (190.29,157.42) and (191.74,158.5) .. (191.74,159.83) .. controls (191.74,161.16) and (190.29,162.23) .. (188.49,162.23) .. controls (186.7,162.23) and (185.24,161.16) .. (185.24,159.83) -- cycle ;
\draw    (274.33,183) .. controls (327.33,140) and (362.39,146.83) .. (394.86,170.92) .. controls (427.33,195) and (440.33,202) .. (482.33,172) ;
\draw  [color={rgb, 255:red, 0; green, 0; blue, 0 }  ,draw opacity=1 ][line width=3.75]  (284.24,172.83) .. controls (284.24,171.5) and (285.7,170.42) .. (287.49,170.42) .. controls (289.29,170.42) and (290.74,171.5) .. (290.74,172.83) .. controls (290.74,174.16) and (289.29,175.23) .. (287.49,175.23) .. controls (285.7,175.23) and (284.24,174.16) .. (284.24,172.83) -- cycle ;
\draw    (321,4) -- (346.33,48) ;
\draw    (332,53) -- (357.33,97) ;
\draw    (339,18) -- (339,76) ;
\draw  [color={rgb, 255:red, 208; green, 2; blue, 27 }  ,draw opacity=1 ][line width=3.75]  (324.24,15.83) .. controls (324.24,14.5) and (325.7,13.42) .. (327.49,13.42) .. controls (329.29,13.42) and (330.74,14.5) .. (330.74,15.83) .. controls (330.74,17.16) and (329.29,18.23) .. (327.49,18.23) .. controls (325.7,18.23) and (324.24,17.16) .. (324.24,15.83) -- cycle ;
\draw    (283,8) -- (308.33,52) ;
\draw    (294,57) -- (319.33,101) ;
\draw    (301,22) -- (301,80) ;
\draw  [color={rgb, 255:red, 208; green, 2; blue, 27 }  ,draw opacity=1 ][line width=3.75]  (286.24,19.83) .. controls (286.24,18.5) and (287.7,17.42) .. (289.49,17.42) .. controls (291.29,17.42) and (292.74,18.5) .. (292.74,19.83) .. controls (292.74,21.16) and (291.29,22.23) .. (289.49,22.23) .. controls (287.7,22.23) and (286.24,21.16) .. (286.24,19.83) -- cycle ;
\draw    (414,38) -- (439.33,82) ;
\draw    (425,87) -- (450.33,131) ;
\draw    (432,52) -- (432,110) ;
\draw  [color={rgb, 255:red, 208; green, 2; blue, 27 }  ,draw opacity=1 ][line width=3.75]  (417.24,49.83) .. controls (417.24,48.5) and (418.7,47.42) .. (420.49,47.42) .. controls (422.29,47.42) and (423.74,48.5) .. (423.74,49.83) .. controls (423.74,51.16) and (422.29,52.23) .. (420.49,52.23) .. controls (418.7,52.23) and (417.24,51.16) .. (417.24,49.83) -- cycle ;
\draw  [color={rgb, 255:red, 74; green, 144; blue, 226 }  ,draw opacity=1 ][line width=3.75]  (309.24,91.83) .. controls (309.24,90.5) and (310.7,89.42) .. (312.49,89.42) .. controls (314.29,89.42) and (315.74,90.5) .. (315.74,91.83) .. controls (315.74,93.16) and (314.29,94.23) .. (312.49,94.23) .. controls (310.7,94.23) and (309.24,93.16) .. (309.24,91.83) -- cycle ;
\draw  [color={rgb, 255:red, 74; green, 144; blue, 226 }  ,draw opacity=1 ][line width=3.75]  (347.24,86.83) .. controls (347.24,85.5) and (348.7,84.42) .. (350.49,84.42) .. controls (352.29,84.42) and (353.74,85.5) .. (353.74,86.83) .. controls (353.74,88.16) and (352.29,89.23) .. (350.49,89.23) .. controls (348.7,89.23) and (347.24,88.16) .. (347.24,86.83) -- cycle ;
\draw  [color={rgb, 255:red, 74; green, 144; blue, 226 }  ,draw opacity=1 ][line width=3.75]  (442.24,121.83) .. controls (442.24,120.5) and (443.7,119.42) .. (445.49,119.42) .. controls (447.29,119.42) and (448.74,120.5) .. (448.74,121.83) .. controls (448.74,123.16) and (447.29,124.23) .. (445.49,124.23) .. controls (443.7,124.23) and (442.24,123.16) .. (442.24,121.83) -- cycle ;
\draw  [color={rgb, 255:red, 74; green, 144; blue, 226 }  ,draw opacity=1 ][line width=3.75]  (309.24,158.83) .. controls (309.24,157.5) and (310.7,156.42) .. (312.49,156.42) .. controls (314.29,156.42) and (315.74,157.5) .. (315.74,158.83) .. controls (315.74,160.16) and (314.29,161.23) .. (312.49,161.23) .. controls (310.7,161.23) and (309.24,160.16) .. (309.24,158.83) -- cycle ;
\draw  [color={rgb, 255:red, 74; green, 144; blue, 226 }  ,draw opacity=1 ][line width=3.75]  (347.24,152.83) .. controls (347.24,151.5) and (348.7,150.42) .. (350.49,150.42) .. controls (352.29,150.42) and (353.74,151.5) .. (353.74,152.83) .. controls (353.74,154.16) and (352.29,155.23) .. (350.49,155.23) .. controls (348.7,155.23) and (347.24,154.16) .. (347.24,152.83) -- cycle ;
\draw  [color={rgb, 255:red, 74; green, 144; blue, 226 }  ,draw opacity=1 ][line width=3.75]  (442.24,190.83) .. controls (442.24,189.5) and (443.7,188.42) .. (445.49,188.42) .. controls (447.29,188.42) and (448.74,189.5) .. (448.74,190.83) .. controls (448.74,192.16) and (447.29,193.23) .. (445.49,193.23) .. controls (443.7,193.23) and (442.24,192.16) .. (442.24,190.83) -- cycle ;

\draw (13,52.4) node [anchor=north west][inner sep=0.75pt]    {$1$};
\draw (49,48.4) node [anchor=north west][inner sep=0.75pt]    {$2$};
\draw (175,71.4) node [anchor=north west][inner sep=0.75pt]    {$r$};
\draw (108,86.4) node [anchor=north west][inner sep=0.75pt]    {$\cdots $};
\draw (242,100.4) node [anchor=north west][inner sep=0.75pt]    {$=$};
\draw (270,14.4) node [anchor=north west][inner sep=0.75pt]    {$1$};
\draw (306,10.4) node [anchor=north west][inner sep=0.75pt]    {$2$};
\draw (432,33.4) node [anchor=north west][inner sep=0.75pt]    {$r$};
\draw (365,48.4) node [anchor=north west][inner sep=0.75pt]    {$\cdots $};
\draw (370,113.4) node [anchor=north west][inner sep=0.75pt]    {$+$};
\draw (392,152.4) node [anchor=north west][inner sep=0.75pt]    {$\ddots $};
\draw (86,188.4) node [anchor=north west][inner sep=0.75pt]    {$Z$};
\draw (519,71.4) node [anchor=north west][inner sep=0.75pt]    {$Z_{2}$};
\draw (522,172.4) node [anchor=north west][inner sep=0.75pt]    {$Z_{1}$};

\end{tikzpicture}

Colors when printed in black and white:

\tikzset{every picture/.style={line width=0.75pt}}
\begin{tikzpicture}[x=0.75pt,y=0.75pt,yscale=-1,xscale=1]
\draw  [line width=3.75]  (23,21.67) .. controls (23,20.19) and (24.19,19) .. (25.67,19) .. controls (27.14,19) and (28.33,20.19) .. (28.33,21.67) .. controls (28.33,23.14) and (27.14,24.33) .. (25.67,24.33) .. controls (24.19,24.33) and (23,23.14) .. (23,21.67) -- cycle ;
\draw  [color={rgb, 255:red, 208; green, 2; blue, 27 }  ,draw opacity=1 ][fill={rgb, 255:red, 255; green, 255; blue, 255 }  ,fill opacity=1 ][line width=3.75]  (113,21.33) .. controls (113,19.86) and (114.19,18.67) .. (115.67,18.67) .. controls (117.14,18.67) and (118.33,19.86) .. (118.33,21.33) .. controls (118.33,22.81) and (117.14,24) .. (115.67,24) .. controls (114.19,24) and (113,22.81) .. (113,21.33) -- cycle ;
\draw  [color={rgb, 255:red, 74; green, 144; blue, 226 }  ,draw opacity=1 ][line width=3.75]  (193,20.33) .. controls (193,18.86) and (194.19,17.67) .. (195.67,17.67) .. controls (197.14,17.67) and (198.33,18.86) .. (198.33,20.33) .. controls (198.33,21.81) and (197.14,23) .. (195.67,23) .. controls (194.19,23) and (193,21.81) .. (193,20.33) -- cycle ;
\draw (43,12.33) node [anchor=north west][inner sep=0.75pt]   [align=left] {black};
\draw (133,12) node [anchor=north west][inner sep=0.75pt]   [align=left] {red};
\draw (209,13) node [anchor=north west][inner sep=0.75pt]   [align=left] {blue};
\end{tikzpicture}

We decompose $Z=Z_1 \times_{\Delta^r} Z_2$ as in the picture above.
Let $h_{1,r}, h_{2,r}$ be the cyclic permutations of the blue marked points according to the action of $h$ on the red marked points, and $h_1,h_2$ be the action of $h$ on other marked points in $Z_1, Z_2$.
Then the induced $h_1h_2$ action on $Z=Z_1 \times_{\Delta^r}Z_2$ is $h$.
Let $V_1,V_2$ be (virtual) $h_1,h_2$-bundles over $Z_1,Z_2$ and $V = V_1 \boxtimes V_2$.
Use $ev_{1,i}, ev_{2,i}$ for the evaluation maps at marked points permuted by $h_{1,r}, h_{2,r}$.

\begin{lemma}
\[str_{h} H^\ast (Z,V) = \sum_{\alpha,\beta} G_r^{\alpha\beta} str_{h_1h_{1,r}} H^\ast (Z_1,V_1 \cdot \prod_i ev_{1,i}^\ast \phi_\alpha) \cdot str_{h_2h_{2,r}} H^\ast (Z_2,V_2 \cdot \prod_i ev_{2,i}^\ast \phi_\beta).\]
\end{lemma}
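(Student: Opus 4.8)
The plan is to compute the equivariant super-trace over the fibered product by splitting it, via virtual transversality, into a product of super-traces over $Z_1$ and $Z_2$ joined along a $K$-theoretic diagonal, and then to evaluate the effect of the $r$-cycle that permutes the nodes. First I would record that $Z$ is cut out inside $Z_1 \times Z_2$ as the preimage of the small diagonal $\Delta^r \subset X^r \times X^r$ under the product $ev_1 \times ev_2$ of the tuples of evaluation maps $ev_j = (ev_{j,1},\dots,ev_{j,r})$, $j=1,2$, and that --- by the gluing and normalization properties of Lee's virtual structure sheaves used in Section 3 of \cite{WDVV} and in \cite{Lee01} --- this fibered product is virtually transverse, so that the virtual structure sheaf of $Z$ (implicit in $H^\ast(Z,-)$) is the $\boxtimes$-product of those of $Z_1$ and $Z_2$ restricted along $Z_1 \times_{\Delta^r} Z_2$. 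Hence $H^\ast(Z,V)$ may be computed on $Z_1 \times Z_2$ against $V_1 \boxtimes V_2$ tensored with the pulled-back kernel $(ev_1 \times ev_2)^\ast \mathcal{O}_{\Delta^r}$. I would also check that the $h$-action restricts compatibly to $h_1 h_{1,r}$ on the $Z_1$-factor and to $h_2 h_{2,r}$ on the $Z_2$-factor, with $h_{1,r}$ and $h_{2,r}$ inducing the same permutation of the $r$ glued nodes, so that every automorphism in sight lifts to $Z_1 \times Z_2$ and to the kernel.

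Next I would split the diagonal. In $K^0(X\times X)\otimes\mathbb{Q}$ the K\"unneth theorem together with the perfectness of $(\varphi,\psi)=\chi(X,\varphi\otimes\psi)$ gives $[\mathcal{O}_\Delta]=\sum_{\alpha,\beta} g^{\alpha\beta}\,\phi_\alpha\boxtimes\phi_\beta$; however, the two sides of one of our nodes may be joined through a chain of rational bubbles carrying $\tau$-insertions --- this is the depth structure recorded by Corollary 2.1 --- so the alternating sum over the lengths of such chains must be resummed, and, exactly as in the proof of Lemma 1.1, the geometric series $[I+G]^{-1}=I-G+G^2-\cdots$ replaces $g^{\alpha\beta}$ by the quantum-corrected metric $G^{\alpha\beta}$; since moreover these bubble chains sit at the $r$ cyclically permuted nodes, their $\tau$-insertions are attached to $r$-cycles, which is precisely the reindexing $R_r$, so the kernel at the $r$ nodes is $\sum_{\vec\alpha,\vec\beta}\big(\prod_{i=1}^r G_r^{\alpha_i\beta_i}\big)(\otimes_i\phi_{\alpha_i})\boxtimes(\otimes_i\phi_{\beta_i})$. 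Feeding this into the equivariant K\"unneth isomorphism $H^\ast(Z_1\times Z_2, A\boxtimes B)=H^\ast(Z_1,A)\otimes H^\ast(Z_2,B)$ factors the super-trace as a sum over $r$-tuples of a $Z_1$-trace twisted by $h_1 h_{1,r}$ times a $Z_2$-trace twisted by $h_2 h_{2,r}$, and taking the super-trace of the $r$-cycle $h_{j,r}$ acting on the $r$ tensor slots is an Adams-type operation that collapses each $r$-tuple to a single index, inserting the \emph{same} class $\phi_\alpha$ (resp.\ $\phi_\beta$) at all marked points $ev_{1,i}$ (resp.\ $ev_{2,i}$) and leaving the coefficient $\sum_{\alpha,\beta}G_r^{\alpha\beta}$. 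This is the asserted formula.

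The step I expect to be the main obstacle is this diagonal splitting together with its equivariance: one must verify that the Adams-type collapse produced by the $r$-cycle acting on the $r$-fold diagonal, combined with the redistribution (with the correct signs) of the $\tau$-bubble chains near the nodes between $Z_1$, $Z_2$ and the gluing, reproduces exactly the level-$r$ metric $G_r=R_r(G)$ built into the definitions --- not the bare pairing, and not $G_1$ --- and that the summations over $\tau$-insertions and degrees are bookkept consistently on the two sides. A secondary point requiring care is the virtual transversality in the first step: because the strata $Z$ furnished by Corollary 2.1 have self-intersections and the gluing is over the small diagonal $\Delta^r$ rather than a single copy of $\Delta$, one should appeal to the explicit local model of these strata to justify that the virtual structure sheaves multiply as stated.
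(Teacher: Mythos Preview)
Your core plan---embed $Z=Z_1\times_{\Delta^r}Z_2$ into $Z_1\times Z_2$, replace the restriction by the K-theoretic diagonal class, use K\"unneth to factor, and then observe that the $r$-cycle in the supertrace kills every term except those with the \emph{same} pair $(\alpha,\beta)$ at all $r$ slots---is exactly the paper's argument. The paper's proof is four sentences: the inclusion intertwines $h$ with $h_1h_{1,r}h_2h_{2,r}$; the K-theoretic Euler class of the inclusion is the product $\prod_i\sum_{\alpha,\beta}G_r^{\alpha\beta}\,ev_{1,i}^\ast\phi_\alpha\otimes ev_{2,i}^\ast\phi_\beta$; and only the fully diagonal terms $\sum_{\alpha,\beta}\prod_i(\cdots)$ survive $str_h$. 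Your ``Adams-type collapse'' is precisely this last sentence.

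Where you go astray is your second paragraph. You try to upgrade the bare pairing $g^{\alpha\beta}$ to $G_r^{\alpha\beta}$ by resumming an alternating series of $\tau$-bubble chains at the glued nodes, invoking Corollary~2.1 and the geometric-series trick from Lemma~1.1. This is misplaced. In the lemma $Z$ is a \emph{single fixed stratum} of a given depth $m$; the cut $Z=Z_1\times_{\Delta^r}Z_2$ is made at $r$ specific nodes, and the entire chain of rational bubbles already sits inside $Z_2$, not at the cut. There is nothing to resum at the gluing---the diagonal class there is just the Poincar\'e-dual one. The alternating sum over depths is carried out \emph{after} this lemma, in the paragraph that sums the $Z_2$-contributions over all $n,d,m$ and obtains $\Psi^rR_r(G^{\alpha\beta}\langle\!\langle\phi_\beta,\phi L^a\rangle\!\rangle_{0,2})$; that is where the Lemma~1.1 mechanism actually enters. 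Injecting it into the present lemma conflates two distinct steps and, if followed literally, would double-count the bubble chains. The paper's proof, by contrast, simply writes down the Euler class and performs no resummation whatsoever inside the lemma.
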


\begin{proof}
There is a natural inclusion $Z=Z_1 \times_{\Delta^r} Z_2 \hookrightarrow Z_1 \times Z_2$, by viewing the $r$ nodes glued together as $2r$ marked points.
This inclusion intertwines the $h$ action on $Z$ and $h_1h_2h_{1,r}h_{2,r}$ action on $Z_1 \times Z_2$.
The K-theoretic virtual Euler class of the inclusion is $\prod_i \sum_{\alpha,\beta} G_r^{\alpha\beta} (ev_{1,i}^\ast \phi_\alpha \otimes ev_{2,i}^\ast \phi_\beta)$.
So 
\[str_{h} H^\ast (Z,V)=str_{h_1h_2h_{1,r}h_{2,r}} H^\ast(Z_1 \times Z_2, V \cdot\prod_i \sum_{\alpha,\beta} G_r^{\alpha\beta} (ev_{1,i}^\ast \phi_\alpha \otimes ev_{2,i}^\ast \phi_\beta)).\]
On the other hand, only terms $\sum_{\alpha,\beta} \prod_i G_r^{\alpha\beta} (ev_{1,i}^\ast \phi_\alpha \otimes ev_{2,i}^\ast \phi_\beta)$ in $\prod_i \sum_{\alpha,\beta} G_r^{\alpha\beta} (ev_{1,i}^\ast \phi_\alpha \otimes ev_{2,i}^\ast \phi_\beta)$ survive the supertrace, and the lemma follows.
\end{proof}

\begin{proof} (Proof of Proposition 2.1)
Use $\bar{Z}_2$ for one of the $r$ identical components in the product $Z_2$, $\bar{V}'_2$ for some bundle over $\bar{Z}_2$, and $V'_2 = \bar{V'}_2^{\boxtimes r}$. 
Then the super-trace
\[str_{h_2h_{2,r}}H^\ast(Z_2,V'_2) =str_{h_2h_{2,r}} H^\ast(\bar{Z}_2,\bar{V'}_2)^{\otimes r} =\Psi^r str_{(h_2h_{2,r})^r} H^\ast(\bar{Z}_2,\bar{V'}_2).\]
So the weighted sum of these super-traces, as the contributions from $Z_2$ over all possible $n,d$ and fixed $r$, with inputs $\phi^\alpha, \phi L^a$ for the first $r$ and last $r$ marked points, is $\Psi^rR_r(G^{\alpha\beta} \left<\!\left<\phi_\beta, \phi L^a\right>\!\right>_{0,2})$.
As a result,
\[\begin{array}{ll}
&\left<\!\left<\phi L^a\bar{L}^b,\cdots\right>\!\right>_{g,1_r+\ell}-\left<\!\left<\phi L^{a-1}\bar{L}^{b+1},\cdots\right>\!\right>_{g,1_r+\ell} \\
=& \sum_{\alpha,\beta} \left<\!\left<\cdots, \phi_\alpha \bar{L}^b\cdot G_r^{\alpha\beta} R_r(\left<\!\left<\phi_\beta, \phi L_i^a\right>\!\right>_{0,2})\right>\!\right>_{g,1_r+\ell}.
\end{array}\]
So
\[\left<\phi L^a \bar{L}^b\right|_r=\left<\phi L^{a-1} \bar{L}^{b+1}\right|_r+\left<\sum_{\alpha,\beta}\phi_\alpha \bar{L}^b G_r^{\alpha,\beta} \cdot R_r(\left<\!\left<\phi_\beta, \phi L^a\right>\!\right>_{0,2})\right|_r,\]
which implies that 
\[\left<\phi L^a\right|_r = \left<\phi \bar{L}^a\right|_r + \sum_{b=0}^{a-1} \sum_{\alpha,\beta}\left<\phi_\alpha, \bar{L}^b\right|_r G_r^{\alpha\beta} \cdot R_r (\left<\!\left<\phi_\beta, \phi L^{a-b}\right>\!\right>_{0,2}).\]
Similarly, for negative exponents, we have
\[\left<\phi L^{-a-1}\right|_r = \left<\phi \bar{L}^{-a-1}\right|_r - \sum_{b=0}^{a} \sum_{\alpha,\beta}\left<\phi_\alpha, \bar{L}^{-b-1}\right|_r G_r^{\alpha\beta} \cdot R_r (\left<\!\left<\phi_\beta, \phi L^{-a+b}\right>\!\right>_{0,2}).\]

This is concisely described as $\left<\mathbf{t}_r(L)\right|_r = \left<[\S_r(\bar{L})\mathbf{t}_r(\bar{L})]_+\right|_r$.
Indeed, by Cauchy's residue formula,
\[[f(q)]_+ = -(Res_{w=0}+Res_{w=\infty}) \frac{f(w)dw}{w-q}.\]
So for a Laurent polynomial $t(q)$, 
\[[\frac{t(q)}{1-L/q}]_+ = \frac{q\mathbf{t}(q)-L\mathbf{t}(L)}{q-L}.\]
Take $t(q) = \phi q^a$, we have
\[[\S_r(q)t(q)]_+ = \sum_{\alpha,\beta} ((\phi,\phi_\alpha)G^{\alpha\beta}_r \phi_\beta q^a + \sum_{b=0}^a R_r(\left<\!\left<\phi L^a, \phi_\alpha\right>\!\right>_{0,2}) G^{\alpha\beta}_r \phi_\beta q^{a-b}).\]
Apply
\[\sum_\alpha (\phi,\phi_\alpha) G^{\alpha\beta}_r = (\phi,\phi_\beta)-\sum_{\alpha} R_r(\left<\!\left<\phi, \phi_\alpha\right>\!\right>_{0,2}) G^{\alpha\beta}_r,\]
we have 
\[\left<\phi L^a\right|_r = \left<[\S_r(\bar{L})\phi \bar{L}^a)]_+\right|_r.\]
Similarly
\[\left<\phi L^{-a-1}\right|_r = \left<[\S_r(\bar{L})\phi \bar{L}^{-a-1})]_+\right|_r,\]
and so $\left<\mathbf{t}_{r,k}(L)\right|_r = \left<[\S_r(\bar{L})\mathbf{t}_{r,k}(\bar{L})]_+\right|_r = \left<\bar{\t}_{r,k}(\bar{L})\right|_r$ for $\bar{\t}_{r,k}= [\S_r\mathbf{t}_{r,k}]_+$.
Apply this identity to all inputs in $\left<\!\left<\mathbf{t}_{1,1}(L), \cdots, \mathbf{t}_{r,k}(L), \cdots \right>\!\right>_{g,\ell}$ proves Proposition 2.1 in the beginning of this subsection.
\end{proof}

\subsection{Proof of Theorem 1}
\

The sum of 
\[\left<\!\left<\mathbf{t}_1(L), \cdots, \mathbf{t}_1(L); \cdots, \mathbf{t}_r(L), \cdots \right>\!\right>_{g,\ell},\]
over all $\ell$'s (regardless of if $(g,\ell)$ is stable or not), is $\mathcal{F}_g(\tau+\t)$.
Set $\bar{\t} = [\S\t]_+$, i.e., $\bar{\t}_r = [\S_r\t_r]_+$ as in the last subsection, where $[\cdots]_+$ is the positive part of $\cdots$ under the Lagrangian polarization of ambient symplectic vector spaces.
By Proposition 2.1, the stable terms
\[\left<\!\left<\mathbf{t}_1(L), \cdots, \mathbf{t}_1(L); \cdots, \mathbf{t}_r(L), \cdots \right>\!\right>_{g,\ell}=\left<\!\left<\bar{\mathbf{t}}_1(\bar{L}), \cdots, \bar{\mathbf{t}}_1(\bar{L}); \cdots, \bar{\mathbf{t}}_r(\bar{L}), \cdots\right>\!\right>_{g,\ell}\]
sum up to $\bar{\mathcal{F}}_g(\bar{\mathbf{t}})$.
The unstable terms are $\<\!\< \ \>\!\>_{1,0}, \left<\!\left<\mathbf{t}_2\right>\!\right>_{0,1_2}, \<\!\< \ \>\!\>_{0,0}, \<\!\<\t_1\>\!\>_{0,1_1},\frac12\<\!\<\t_1,\t_1\>\!\>_{0,2_1}$, so we have
\[\begin{array}{ll}\mathcal{F}_g(\tau+\t) &= \bar{\mathcal{F}}_g(\bar{\mathbf{t}}) + \delta_{g,1} \<\!\< \ \>\!\>_{1,0}+ \delta_{g,0}\left<\!\left<\mathbf{t}_2\right>\!\right>_{0,1_2} \\
&+ \delta_{g,0} (\<\!\< \ \>\!\>_{0,0} + \<\!\<\t_1\>\!\>_{0,1_1}+\frac12\<\!\<\t_1,\t_1\>\!\>_{0,2_1}).\end{array}\]

To simplify this expression, we use the following facts.
\begin{lemma}
\[\begin{array}{ll}
\<\!\<1,A\>\!\>_{0,2_1} &= (A,\tau_1) + \<\!\<A\>\!\>_{0,1_1}, \\
\<\!\<L-1,A\>\!\>_{0,2_1}& = -\<\!\<A\>\!\>_{0,1_1}+\<\!\<A,\tau_1\>\!\>_{0,2_1},\\
\<\!\<L-1\>\!\>_{0,1_1} &= -2\<\!\< \ \>\!\>_{0,0}+\<\!\<\tau_1\>\!\>_{0,1_1}.
\end{array}\]
\end{lemma}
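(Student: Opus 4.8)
\emph{Overview of the plan.} All three identities are instances of the genus-zero string equation (Proposition 1.1) and dilaton equation (Proposition 1.2), carried over to the double-bracket correlators $\<\!\<\cdots\>\!\>_{0,\ell}$. Recall that by definition $\<\!\<A_1,\cdots,A_n\>\!\>_{0,n}=\sum_{\ell,d}\tfrac{Q^d}{\ell!}\<A_1,\cdots,A_n,\tau_1,\cdots;\cdots,\tau_r,\cdots\>_{0,n_1+\ell,d}$, so the plan is: expand the left-hand side as this series, apply Proposition 1.1 or 1.2 to every summand for which the relevant point-forgetting map is defined, re-sum the output into double brackets, and evaluate by hand the finitely many remaining summands — those with $d=0$ and too few marked points for the forgetting map to exist — using that $\M_{0,3}(X,0)=X$ (all evaluation maps the identity, universal cotangent line trivial) and the convention that correlators over unstable configurations vanish.

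\emph{First identity.} I would apply Proposition 1.1 to each summand of $\<\!\<1,A\>\!\>_{0,2_1}$ for which the map forgetting the $1$-point exists; it becomes $\<A,\cdots\>_{0,1_1+\ell,d}$ plus $D$-shifts of the other $1$-cycle inputs — $A$ and the inserted copies of $\tau_1$. Since $\tau_1\in K$ is independent of $q$ (and, in the intended use, so is $A$), these $D$-shifts vanish and the stable part re-sums to $\<\!\<A\>\!\>_{0,1_1}$. The remaining summands have $d=0$: the one with no inserted $\tau$'s lives over the empty space $\M_{0,2}(X,0)$ and vanishes, and the one with a single inserted $\tau_1$ equals $\<1,A,\tau_1\>_{0,3_1,0}=\chi(X,1\otimes A\otimes\tau_1)=(A,\tau_1)$. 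Summing gives the first line.

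\emph{Second and third identities.} The same scheme with Proposition 1.2: expanding $\<\!\<L-1,A\>\!\>_{0,2_1}$, respectively $\<\!\<L-1\>\!\>_{0,1_1}$, each summand for which the map forgetting the $(L-1)$-point exists becomes $(\ell_1-1)\<A,\cdots\>_{0,1_1+\ell,d}$, resp. $(\ell_1-2)\<\cdots\>_{0,\ell,d}$, where $\ell_1$ is the number of inserted $\tau_1$'s and the coefficient is the number of $1$-cycle inputs other than $L-1$, minus $2$. I would write this coefficient as $\ell_1-c$ ($c=1$, resp. $c=2$): the $(-c)$-part re-sums to $-\<\!\<A\>\!\>_{0,1_1}$, resp. $-2\<\!\<\ \>\!\>_{0,0}$, and for the $\ell_1$-part I would use $\ell_1/\ell_1!=1/(\ell_1-1)!$ and re-index $\ell_1\mapsto\ell_1+1$, turning $\sum\tfrac{Q^d}{\ell!}\,\ell_1\<\cdots,\tau_1^{(\ell_1)},\cdots\>$ into $\sum\tfrac{Q^d}{\ell!}\<\cdots,\tau_1,\tau_1^{(\ell_1)},\cdots\>$, i.e. $\<\!\<A,\tau_1\>\!\>_{0,2_1}$, resp. $\<\!\<\tau_1\>\!\>_{0,1_1}$. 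One then checks that the set of $(\ell,d)$ on which Proposition 1.2 was applied coincides, after re-indexing, with the index set defining the target double bracket, the mismatched terms all being $d=0$ correlators over empty moduli spaces. Finally, every leftover $d=0$ summand is either a correlator over an empty moduli space or a correlator on $\M_{0,3}(X,0)=X$ carrying the factor $L-1$, which is $0$ there; so there is no boundary correction, and lines two and three follow.

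\emph{Where the difficulty lies.} The substance is bookkeeping rather than geometry: pinning down exactly the stability thresholds beyond which Propositions 1.1--1.2 stop applying, checking that the shift $\ell_1\mapsto\ell_1+1$ carries these thresholds to the ones defining the double bracket on the right, and confirming that every discrepancy is either a correlator over an empty moduli space or the single honest boundary term $\<1,A,\tau_1\>_{0,3_1,0}=(A,\tau_1)$ of the first identity — whose analogues for $L-1$ vanish since $L=1$ on $\M_{0,3}(X,0)$.
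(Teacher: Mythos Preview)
Your proposal is correct and follows essentially the same approach as the paper: expand the double bracket, apply the string equation (for the first identity) or the dilaton equation (for the second and third), and handle the small-$d$, small-$\ell$ boundary terms by hand. The paper's proof is terser---it writes out the first identity explicitly and then dispatches the other two with ``Similarly, the second and third equations follow from the Dilaton equation''---whereas you spell out the $\ell_1\mapsto\ell_1+1$ re-indexing and the matching of stability thresholds, which is exactly the bookkeeping the paper leaves implicit.
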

\begin{proof}
The first equation follows from the string equation.
\[\begin{array}{ll}
\<\!\<1,A\>\!\>_{0,2_1} &= \<1,A,\tau_1\>_{0,3_1,0}+\dsum_{\sum_r r\ell_r \geq 2 \ or \ d \neq 0} \dfrac{Q^d}{\ell!}\<1,A,\tau_1,\cdots\>_{0,2_1+\ell,d}\\
&= (A,\tau_1) + \dsum_{\sum_r r\ell_r \geq 2 \ or \ d \neq 0} \dfrac{Q^d}{\ell!}\<A,\tau_1,\cdots\>_{0,1_1+\ell,d} \\
&= (A,\tau_1) + \<\!\<A\>\!\>_{0,1_1}.\end{array}\]
Similarly, the second and third equations follow from the Dilaton equation.
\end{proof}

Let $\mathbf{x} = \t+v+\tau, \bar{\mathbf{x}} = \bar{\t}+\bar{v}$ be shifted inputs, where $v=\bar{v}=(1-q,1-q,\cdots)$ are the Dilaton vectors.

\begin{lemma}
\[[\S_1(q)(1-q+\tau_1)]_+=1-q.\]
\end{lemma}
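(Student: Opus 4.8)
The plan is to exploit the $\mathbb{Q}((q))$-linearity of $S(q)$ (note that $\S_1(q)=R_1(S(q))=S(q)$, since $R_1$ acts trivially) to split the argument and then isolate the Laurent-polynomial part term by term. First I would write
\[S(q)(1-q+\tau_1) = (1-q)\,S(q)1 + S(q)\tau_1,\]
where $S(q)1$ and $S(q)\tau_1$ come from the defining formula with $\phi=1$ and $\phi=\tau_1$. For any constant $\phi\in K$ one checks directly that $S(q)\phi-\phi\in\bar{\mathcal{K}}_-$: the $q$-dependence sits entirely in $\<\!\<\frac{\phi}{1-L/q},\phi_\alpha\>\!\>_{0,2}$, which has poles only at roots of unity, is regular at $q=0$ (since $\frac{\phi}{1-L/q}=\frac{q\phi}{q-L}$ vanishes there), and tends to $\<\!\<\phi,\phi_\alpha\>\!\>_{0,2}$ as $q\to\infty$, so that $S(\infty)\phi=\phi$ by the definition of $[G^{\alpha\beta}]$ as the inverse of $[G_{\alpha\beta}]$. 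Hence $[S(q)\phi]_+=\phi$; in particular $[S(q)\tau_1]_+=\tau_1$, and the $(1-q)\cdot 1$ piece contributes $[(1-q)]_+=1-q$.

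The remaining contribution is $[(1-q)(S(q)1-1)]_+$. Writing $g:=S(q)1-1\in\bar{\mathcal{K}}_-$, the product $(1-q)g$ is regular at $q=0$, has poles only at roots of unity, and tends to the finite limit $-\lim_{q\to\infty}q\,g(q)$ at infinity. Since the only Laurent polynomials regular at both $0$ and $\infty$ are constants, its positive part is exactly this limiting constant, i.e. $[(1-q)g]_+=-\lim_{q\to\infty}q\,(S(q)1-1)$. Expanding $\frac{1}{1-L/q}=1+L/q+O(q^{-2})$ at infinity gives
\[S(q)1 = 1 + \frac1q\sum_{\alpha,\beta}\<\!\<L,\phi_\alpha\>\!\>_{0,2}G^{\alpha\beta}\phi_\beta + O(q^{-2}),\]
so the limit equals $\sum_{\alpha,\beta}\<\!\<L,\phi_\alpha\>\!\>_{0,2}G^{\alpha\beta}\phi_\beta$.

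The heart of the argument is to evaluate this last sum. Adding the first two identities of Lemma 2.3 (with $A=\phi_\alpha$) cancels the $1$-point terms and yields $\<\!\<L,\phi_\alpha\>\!\>_{0,2}=(\phi_\alpha,\tau_1)+\<\!\<\phi_\alpha,\tau_1\>\!\>_{0,2}$. Writing $\tau_1=\sum_\gamma t^\gamma\phi_\gamma$, the right-hand side is $\sum_\gamma t^\gamma G_{\alpha\gamma}$ by $\Lambda$-linearity in the first slot, so contracting against $G^{\alpha\beta}$ collapses the double sum via $\sum_\alpha G_{\gamma\alpha}G^{\alpha\beta}=\delta_\gamma^\beta$ to $\sum_\gamma t^\gamma\phi_\gamma=\tau_1$. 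Hence $[(1-q)(S(q)1-1)]_+=-\tau_1$.

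Assembling the three pieces gives $[S(q)(1-q+\tau_1)]_+=(1-q)-\tau_1+\tau_1=1-q$, as claimed. I expect the main obstacle to be the bookkeeping in this final step — in particular, recognizing the combination $(\phi_\alpha,\tau_1)+\<\!\<\phi_\alpha,\tau_1\>\!\>_{0,2}$ as a row of the metric $G$ so that the $G^{\alpha\beta}$-contraction telescopes — together with the care needed earlier to see that multiplication by $(1-q)$ converts the $1/q$-tail of an element of $\bar{\mathcal{K}}_-$ into a genuine (constant) polynomial contribution rather than leaving it inside $\bar{\mathcal{K}}_-$.
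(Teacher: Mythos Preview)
Your argument is correct. Both your proof and the paper's use the same key identity $\langle\!\langle L,\phi_\alpha\rangle\!\rangle_{0,2_1}=(\phi_\alpha,\tau_1)+\langle\!\langle\phi_\alpha,\tau_1\rangle\!\rangle_{0,2_1}$ obtained by adding the first two lines of Lemma~2.3, but they organize the computation differently. The paper applies the residue formula once to get $\bigl[\tfrac{1-q+\tau_1}{1-L/q}\bigr]_+=1-q-L+\tau_1$ as a Laurent polynomial in both $q$ and $L$, substitutes this into the definition of $S$, and then checks $G\bigl([\S_1(1-q+\tau_1)]_+,A\bigr)=G(1-q,A)$ for all $A$ by collapsing $\langle\!\langle -L+\tau_1,A\rangle\!\rangle$ via Lemma~2.3. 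You instead split $1-q+\tau_1$ by $\mathbb{Q}((q))$-linearity, use the asymptotic fact $S(\infty)\phi=\phi$ (so $S\phi-\phi\in\mathcal{K}_-$ for constant $\phi$) to peel off the polynomial pieces, and then read off the single surviving constant $[(1-q)(S(q)1-1)]_+$ from the $1/q$ coefficient of $S(q)1$. The paper's route is a bit more compact (one residue computation handles everything at once), while yours makes the mechanism more transparent: it isolates exactly why the $\tau_1$ appearing in the input is cancelled by the $\tau_1$ hidden in the $q^{-1}$-tail of $S(q)1$, and it highlights the general principle $[S(q)\phi]_+=\phi$ for $\phi\in K$, which is useful in its own right.
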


\begin{proof}
Use
\[[f(q)]_+ = -(Res_{w=0}+Res_{w=\infty}) \frac{f(w)dw}{w-q},\]
and decompose rational functions in $w$ into partial fractions, we have
\[\left[\dfrac{1-q+\tau_1}{1-L/q}\right]_+= 1-q-L+\tau_1.\]
Together with Lemma 2.3, we have
\[(1-q+\tau_1,A)+ \<\!\<\left[\dfrac{1-q+\tau_1}{1-L/q}\right]_+,A\>\!\>_{0,2_1} = (1-q,A)+\<\!\<1-q,A\>\!\>_{0,2_1},\]
and so
\[[\S_1(q)(1-q+\tau_1)]_+=1-q.\]
\end{proof}

Lemma 2.4 together with $\bar{\t}_1=[\S_1\t_1]_+$ implies $\bar{\mathbf{x}}_1=[\S_1(\mathbf{x}_1)]_+$.
Apply $R_r$ and substitute $\bar{\mathbf{x}}_1, \mathbf{x}_1$ by $\bar{\mathbf{x}}_r, \mathbf{x}_r$, we have $\bar{\mathbf{x}}_r=[\S_r(\mathbf{x}_r)]_+$.
So $\bar{\mathbf{x}} = [\S\mathbf{x}]_+$.

\begin{lemma}
\[\<\!\< \ \>\!\>_{0,0} + \<\!\<\t_1\>\!\>_{0,1_1}+\frac12\<\!\<\t_1,\t_1\>\!\>_{0,2_1} = \frac12\<\!\<\mathbf{x}_1,\mathbf{x}_1\>\!\>_{0,2_1},\]
where $\mathbf{x}_1=\mathbf{t}_1-q+1+\tau_1$.
\end{lemma}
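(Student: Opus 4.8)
The plan is to expand the right-hand side by multilinearity in the explicit inputs of the double-bracket correlators and then collapse each resulting term using the three identities of Lemma 2.3 (themselves consequences of the string and dilaton equations). Writing $\mathbf{x}_1 = \t_1 + (1-q) + \tau_1$ and using that $\<\!\<\,\cdot\,,\,\cdot\,\>\!\>_{0,2_1}$ is bilinear and symmetric in its two explicit slots, I would first split
\[
\tfrac12\<\!\<\mathbf{x}_1,\mathbf{x}_1\>\!\>_{0,2_1} = \tfrac12\<\!\<\t_1,\t_1\>\!\>_{0,2_1} + \Bigl(\<\!\<\t_1,1-q\>\!\>_{0,2_1} + \<\!\<\t_1,\tau_1\>\!\>_{0,2_1}\Bigr) + \tfrac12\<\!\<1-q+\tau_1,\,1-q+\tau_1\>\!\>_{0,2_1}.
\]
The first summand is already one of the terms on the left-hand side, so it remains to show that the bracketed ``$\t_1$-linear'' group equals $\<\!\<\t_1\>\!\>_{0,1_1}$ and that the ``$\t_1$-free'' group equals $\<\!\< \ \>\!\>_{0,0}$.

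For the $\t_1$-linear group, I would note that the Laurent polynomial $1-q$ placed at a $1$-cycle marked point is the class $1-L = -(L-1)$ there, so by the symmetry of the bracket and linearity the second identity of Lemma 2.3 (with $A=\t_1$) gives $\<\!\<\t_1,1-q\>\!\>_{0,2_1} = -\<\!\<L-1,\t_1\>\!\>_{0,2_1} = \<\!\<\t_1\>\!\>_{0,1_1} - \<\!\<\t_1,\tau_1\>\!\>_{0,2_1}$; adding back $\<\!\<\t_1,\tau_1\>\!\>_{0,2_1}$ cancels the last term and leaves exactly $\<\!\<\t_1\>\!\>_{0,1_1}$. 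For the $\t_1$-free group I would expand once more into $\tfrac12\<\!\<1-q,1-q\>\!\>_{0,2_1}+\<\!\<1-q,\tau_1\>\!\>_{0,2_1}+\tfrac12\<\!\<\tau_1,\tau_1\>\!\>_{0,2_1}$ and run a short cascade of Lemma 2.3: the second identity with $A=\tau_1$ expresses $\<\!\<1-q,\tau_1\>\!\>_{0,2_1}$ through $\<\!\<\tau_1\>\!\>_{0,1_1}$ and $\<\!\<\tau_1,\tau_1\>\!\>_{0,2_1}$; the same identity with $A=1-q$ expresses $\<\!\<1-q,1-q\>\!\>_{0,2_1}$ through $\<\!\<1-q\>\!\>_{0,1_1}$ and $\<\!\<1-q,\tau_1\>\!\>_{0,2_1}$; and the third identity turns $\<\!\<1-q\>\!\>_{0,1_1}=-\<\!\<L-1\>\!\>_{0,1_1}$ into $2\<\!\< \ \>\!\>_{0,0}-\<\!\<\tau_1\>\!\>_{0,1_1}$. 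Back-substituting, all occurrences of $\<\!\<\tau_1\>\!\>_{0,1_1}$ and of $\<\!\<\tau_1,\tau_1\>\!\>_{0,2_1}$ cancel, leaving $\<\!\<1-q+\tau_1,\,1-q+\tau_1\>\!\>_{0,2_1} = 2\<\!\< \ \>\!\>_{0,0}$, i.e. the $\t_1$-free group equals $\<\!\< \ \>\!\>_{0,0}$. Adding the three groups gives the asserted equality.

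I do not expect a serious obstacle: once Lemma 2.3 is in hand the argument is purely bookkeeping. The only points that require a bit of care are (i) remembering that $1-q$ in an explicit slot is $-(L-1)$ there, so Lemma 2.3 applies after a sign flip; (ii) using the symmetry of $\<\!\<\,\cdot\,,\,\cdot\,\>\!\>_{0,2_1}$ to move $L-1$ into whichever slot is convenient; and (iii) not being alarmed that $\tau_1$ appears both as an explicit input and as the background insertion, since that overlap is already encoded in the definition of the double-bracket correlators and is precisely what legitimizes the string/dilaton manipulations behind Lemma 2.3.
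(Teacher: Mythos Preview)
Your proposal is correct and follows essentially the same route as the paper: both split $\tfrac12\<\!\<\mathbf{x}_1,\mathbf{x}_1\>\!\>_{0,2_1}$ into the pieces quadratic, linear, and constant in $\t_1$, and then reduce each piece by repeated use of the three identities of Lemma~2.3. The only cosmetic difference is that the paper writes the constant input as $-L+1+\tau_1$ (so it applies Lemma~2.3 directly) whereas you write it as $1-q+\tau_1$ and insert a sign flip; the resulting bookkeeping and final cancellations are identical.
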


\begin{proof}
The right-hand side
\[\frac12\<\!\<\mathbf{t}_1-L+1+\tau_1,\mathbf{t}_1-L+1+\tau_1\>\!\>_{0,2_1}\]
splits into $3$ terms, namely the terms independent, linear, and quadratic in $\t_1$.
We compute these terms by Lemma 2.3. 
The term quadratic in $\t_1$ is 
\[\frac12\<\!\<\t_1,\t_1\>\!\>_{0,2_1}.\]
The term linear in $\mathbf{t}_1$ is
\[\begin{array}{ll}
&\<\!\<-L+1+\tau_1,\mathbf{t}_1\>\!\>_{0,2_1}\\
=&\<\!\<\mathbf{t}_1\>\!\>_{0,1_1}-\<\!\<\mathbf{t}_1,\tau_1\>\!\>_{0,2_1}+\<\!\<\tau_1,\mathbf{t}_1\>\!\>_{0,2_1}\\
=&\<\!\<\mathbf{t}_1\>\!\>_{0,1_1}.
\end{array}\]
We also have
\[\begin{array}{ll}\<\!\<L-1,L-1\>\!\>_{0,2_1} &= -\<\!\<L-1\>\!\>_{0,1_1} + \<\!\<L-1,\tau_1\>\!\>_{0,2_1}\\
&=2\<\!\< \ \>\!\>_{0,0}-2\<\!\<\tau_1\>\!\>_{0,1_1}+\<\!\<\tau_1,\tau_1\>\!\>_{0,2_1},\end{array}\]
so the term independent of $\mathbf{t}_1$ is
\[\begin{array}{ll}
&\dfrac12\<\!\<L-1,L-1\>\!\>_{0,2_1}-\<\!\<L-1,\tau_1\>\!\>_{0,2_1}+\dfrac12\<\!\<\tau_1,\tau_1\>\!\>_{0,2_1} \\
=& \<\!\< \ \>\!\>_{0,0}-\<\!\<\tau_1\>\!\>_{0,1_1}+\dfrac12\<\!\<\tau_1,\tau_1\>\!\>_{0,2_1}+\<\!\<\tau_1\>\!\>_{0,1_1}-\<\!\<\tau_1,\tau_1\>\!\>_{0,2_1} + \dfrac12\<\!\<\tau_1,\tau_1\>\!\>_{0,2_1} \\
=&\<\!\< \ \>\!\>_{0,0}.
\end{array}\]
\end{proof}
So 
\[\mathcal{F}_g(\mathbf{x}-v) = \bar{\mathcal{F}}_g(\bar{\mathbf{x}}-\bar{v}) + \delta_{g,1} \left<\!\left< \ \right>\!\right>_{1,0}+ \delta_{g,0}(\left<\!\left<\mathbf{t}_2\right>\!\right>_{0,1_2} + \dfrac{\<\!\<\mathbf{x}_1,\mathbf{x}_1\>\!\>_{0,2_1}}2),\]
and hence
\[\mathcal{D}(\mathbf{x})=\exp \left[\sum_k\frac{\Psi^k}k R_k \left(\left<\!\left< \ \right>\!\right>_{1,0} + (\left<\!\left<\mathbf{t}_{2k}\right>\!\right>_{0,1_2} + \frac12 \<\!\<\mathbf{x}_k,\mathbf{x}_k\>\!\>_{0,2_1})/\hbar\right)\right] \mathcal{A}(\bar{\mathbf{x}}),\]
where $R_k$ only shift $\tau$'s, and does not shift $\t$'s or $\mathbf{x}$'s.

On the other hand, by Proposition 5 in \cite{perm7}, we have
\[\hat{\S}^{-1}_1 \mathcal{A}(\mathbf{x}) = e^{\<\!\<\mathbf{x}_1,\mathbf{x}_1\>\!\>_{0,2_1}/2\hbar} \cdot \mathcal{A}([\S_1\mathbf{x}_1]_+,\mathbf{x}_2,\mathbf{x}_3,\cdots).\]
The Symplectic forms $\Omega_k,\bar{\Omega}_k$ on $\mathcal{K}_k,\bar{\mathcal{K}}_k$ are $\frac{\Psi^k}k R_k \Omega_1$ and $\frac{\Psi^k}k R_k \bar{\Omega}_1$, and that $\S_k=\Psi^k(\S_1)$,
so 
\[\hat{\S}^{-1}_k \mathcal{A}(\mathbf{x}) = e^{\frac{\Psi^k}k R_k (\<\!\<\mathbf{x}_k,\mathbf{x}_k\>\!\>_{0,2_1}/2\hbar)} \cdot \mathcal{A}(\mathbf{x}_1,\cdots,\mathbf{x}_{k-1},[\S_k\mathbf{x}_k]_+,\mathbf{x}_{k+1},\cdots).\]
Apply this formula to all inputs $\mathbf{x}_k$, we have
\[\hat{\S}^{-1} \mathcal{A}(\mathbf{x}) = e^{\sum_k\frac{\Psi^k}k R_k (\<\!\<\mathbf{x}_k,\mathbf{x}_k\>\!\>_{0,2_1}/2\hbar)} \cdot \mathcal{A}([\S\mathbf{x}]_+).\]

Together with $\bar{\mathbf{x}} = [S\mathbf{x}]_+$ and
\[\mathcal{D}(\mathbf{x})=\exp \left[\sum_k\frac{\Psi^k}k R_k \left(\left<\!\left< \ \right>\!\right>_{1,0} + \frac{\left<\!\left<\mathbf{t}_{2k}\right>\!\right>_{0,1_2} + \<\!\<\mathbf{x}_k,\mathbf{x}_k\>\!\>_{0,2_1}}{2\hbar}\right)\right] \mathcal{A}(\bar{\mathbf{x}}),\]
we have the Ancestor-Descendant correspondence
\[\mathcal{D}(\mathbf{x})=\exp \left[\sum_k\frac{\Psi^k}k R_k (\left<\!\left< \ \right>\!\right>_{1,0} + \left<\!\left<\mathbf{x}_{2k}-\tau_{2k}+q-1\right>\!\right>_{0,1_2}/\hbar)\right] (\hat{S}^{-1} \mathcal{A})(\mathbf{x}).\]
Here $\mathbf{x}_{2k}-\tau_{2k}+q-1$ is not shifted by $R_k$,

\section{Reconstruction of $g=0$ invariants}
\

Assume that $\Lambda$ is a local ring with maximal ideal $\Lambda_+$, that contain the Novikov variables, and that (coordinates under $\phi_\alpha$ of) $\tau_r, \t_r$'s are in $\Lambda_+^r$.
Assume also that $\cap_n \Lambda_+^n=0$.

In this section, we prove 

\begin{customthm}{2}
The genus $0$ descendant potential $\mathcal{F}_0(\t)$ is recovered from $1$-point correlators 
\[\sum_\alpha \phi^\alpha \<\!\<\dfrac{\phi_\alpha}{1-qL}\>\!\>_{0,1_1}, \sum_\alpha \phi^\alpha \<\!\<\dfrac{\phi_\alpha}{1-qL}\>\!\>_{0,1_2}.\]
Moreover, for each $n$, there is a finite algorithm that computes $\mathcal{F}_0$ modulo $\Lambda_+^{n+1}$.
\end{customthm}

\subsection{A formula for supertrace and graph sums}
\

We first recall a formula for the super-trace of sheaf cohomology from \cite{perm9}.
Let $h$ be a generator of a cyclic group $H$. 

The (virtual) orbifold $I^h\mathcal{M}$ is defined as follows.
In $I(\mathcal{M}/H)$, each stratum is characterized as the lift of some element $h^k \in H$ with respect to a stratum in $I\mathcal{M}$ labeled by $g$.
We label this stratum by $\tilde{h} = (g,h^k)$.
Then $I^h\mathcal{M}$ is the sub-orbifold of $I\mathcal{M}$ corresponding to the strata in $I(\mathcal{M}/H)$ associated specifically with $h=h^1$.

Locally, one can describe $I^h\mathcal{M}$ as follows.
Take a point $x \in \mathcal{M}^h$ and choose a local chart $U/G(x)$ centered at $x$.
The action of $h$ lifts in $|G(x)|$-ways to automorphisms of $U$.
Each such lift fixes some points in $U$.
Then, near $x$, $I^h\mathcal{M}$ is obtained by taking the union of the these fixed loci and quotienting by the action of $H$.

Let $N_{I^h\mathcal{M}/\mathcal{M}}$ be the normal bundle of inclusions of strata in $I^h\mathcal{M}$ to $\mathcal{M}$.
Let $V$ be a virtual $H$-orbibundle over a virtual $H$-orbifold $\mathcal{M}$.
The fake Euler characteristic is defined as $\chi^{fake}(M,V) = \int_{[M]} ch(V) \cdot td (M)$.

\begin{prop}
The super-trace $str_h H^\ast(\mathcal{M}, V)$ equals
\[\chi^{fake}\left(I^h\mathcal{M}, str_{\tilde{h}} \frac{V}{\wedge^\ast N^\ast_{I^h\mathcal{M}/\mathcal{M}}}\right).\]
\end{prop}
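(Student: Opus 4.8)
This is the holomorphic Lefschetz fixed-point formula in the virtual-orbifold category, and the plan is to deduce it from virtual Kawasaki--Riemann--Roch (the form of the theorem already used in \cite{perm9}, after Kawasaki and Tonita) by passing to the quotient stack $[\mathcal{M}/H]$.

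First I would convert the $h$-trace into an ordinary (virtual) Euler characteristic on the quotient. Since $V$ is an $H$-equivariant virtual orbibundle it descends to a virtual orbibundle $\tilde{V}$ on $[\mathcal{M}/H]$, and for each character $\rho$ of the cyclic group $H$ the character line bundle $L_\rho$ pulled back from $BH$ cuts out, upon twisting, a single isotypic summand:
\[
H^\ast\!\big([\mathcal{M}/H],\ \tilde{V}\otimes L_\rho\big)\;=\;H^\ast(\mathcal{M},V)_\rho .
\]
Summing these against the character values gives
\[
str_h H^\ast(\mathcal{M},V)\;=\;\chi\!\big([\mathcal{M}/H],\ \tilde{V}\otimes W_h\big),\qquad W_h:=\sum_{\rho}\rho(h)\,L_\rho ,
\]
so the left-hand side is now an honest virtual Euler characteristic on a proper Deligne--Mumford stack.

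Next I would apply virtual Kawasaki--Riemann--Roch to this Euler characteristic:
\[
\chi\!\big([\mathcal{M}/H],\ \tilde{V}\otimes W_h\big)\;=\;\sum_{\mu}\ \chi^{fake}\!\left(\mathcal{M}_\mu,\ str_{\mu}\,\frac{\tilde{V}\otimes W_h}{\wedge^\ast N^\ast_{\mathcal{M}_\mu}}\right),
\]
the sum running over the components $\mathcal{M}_\mu$ of the inertia stack $I[\mathcal{M}/H]$, each carrying its canonical automorphism and (virtual) normal bundle $N_{\mathcal{M}_\mu}$, whose eigenvalues are all $\neq 1$, so that the denominator becomes invertible after $str_{\mu}$. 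The components of $I[\mathcal{M}/H]$ are exactly the lifts $\tilde{h}^{\,k}=(i,h^k)$ of the excerpt; on such a component the canonical automorphism acts on $L_\rho$ by $\rho(h^k)$, so $str_{\tilde h^{\,k}}W_h=\sum_\rho\rho(h)\rho(h^k)=\sum_\rho\rho(h^{k+1})$, which by orthogonality of characters equals $|H|$ for exactly one value of $k$ and vanishes for the others. Thus only the strata comprising $I^h\mathcal{M}$ survive (up to the standard inversion in the labelling of the canonical automorphism); the constant $|H|$ cancels the factor $1/|H|$ relating $\chi^{fake}$ over $\mathcal{M}_\mu$ to $\chi^{fake}$ over the corresponding component of $I^h\mathcal{M}$; descent identifies $str_{\tilde h}\tilde{V}$ with $str_{\tilde h}V$ and $N_{\mathcal{M}_\mu}$ with $N_{I^h\mathcal{M}/\mathcal{M}}$; and collecting the surviving strata into one fake Euler characteristic over $I^h\mathcal{M}$ yields the stated formula.

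The representation-theoretic mechanism above is routine; the real work is the virtual bookkeeping. One must invoke virtual Kawasaki--Riemann--Roch for $\M_{g,n}(X,d)/H$ equipped with Lee's virtual structure sheaf $\mathcal{O}^{vir}$ --- this is precisely where the hard analysis sits and where \cite{perm9} (via Tonita's virtual orbifold Riemann--Roch) does its job --- and one must keep exact track of the several normalizations: the $1/|H|$ between $\chi^{fake}$ on $[\mathcal{M}/H]$ and on $\mathcal{M}$, the orbifold multiplicities and inversion conventions attached to the inertia strata, compatibility of the virtual structures on $I^h\mathcal{M}$ and on $\mathcal{M}$, and the identification of the $str_{\tilde h}$-twisted factor $\big(\wedge^\ast N^\ast\big)^{-1}$ with the classical Atiyah--Bott and Todd Euler contributions. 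Once these are pinned down, the identity follows formally from the character computation.
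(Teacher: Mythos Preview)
Your proposal is correct and is exactly the standard argument. The paper does not prove this proposition at all: it is stated as a recollection from \cite{perm9}, and later (Proposition~4.2) the paper restates it in the two-step form you describe---first $str_h H^\ast(\mathcal{M},V)=\chi(\mathcal{M}/H,(V\cdot E_h)/H)$ with $E_h=\sum_\lambda \lambda^{-1}\mathbb{C}_\lambda$ (your $W_h$ up to the inversion you flag), then Kawasaki--Riemann--Roch on the quotient, with character orthogonality killing all inertia strata except those in $I^h\mathcal{M}$. Your sketch matches this line for line; the only slip is the sign of the character twist (with the paper's $E_h$ the surviving stratum is $k=1$ on the nose, whereas your $W_h$ picks out $k=-1$), which you already note as a convention issue.
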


In particular, the super-trace $str_h H^\ast(\mathcal{M}, V)$ only depends on the restriction of $V$ to the fixed locus $\mathcal{M}^h$.

We remember various strata in $I^h\M_{g,n}$ (after forgetting all marked points with input $\tau$'s) by graphs decorated by the following data \cite{perm9}.
Each vertex carries a pair $(r,g)$, with $r \in \mathbb{Z}_+$ referred to as the level of the vertex and $g \in \mathbb{Z}_+$ the genus.
Each half-edge carries a pair $(r,\zeta)$, with $r \in \mathbb{Z}_+$ referred to as level, and $\zeta$ a root of unity of order $m(\zeta)$ referred to as the eigenvalue of the half-edge.
Edges connect half edges of the same level, with eigenvalues not inverse to each other.
The level of a half-edge is a multiple of the level of the vertex associated to it. 
The product $rm(\zeta)$ for each half-edge depends only on the vertex it is on.

In these decorated graphs, a type $(r,g)$ vertex represents a balanced multiplicity $r$ curve of total genus $g$.
A curve invariant under $h$ is balanced if it lives in the closure of smooth curves invariant under $h$.
A half edge of type $(r,\zeta)$ stands for a marked point with input $\t$, whose orbit under $h$ has cardinality $r$, and that $h^r$ acts on the cotangent space of the marked point with eigenvalue $\zeta$.
An edge stands for connecting two marked points along a node.
We allow arbitrarily many marked points with inputs $\tau$'s.

Fixed loci are written as the disjoint union of moduli spaces of maps represented by these decorated graphs, so the invariants are a sum of the contribution from these graphs.

\subsection{A vanishing result of ancestor invariants}
\

In this Subsection, we prove the following proposition.

\begin{prop}
If $\bar{\t}(1)=0$, then $\bar{\mathcal{F}}_0(\bar{\t})=0$.
\end{prop}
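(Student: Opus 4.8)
The plan is to show that every correlator occurring in $\bar{\mathcal{F}}_0(\bar{\t})$ is separately zero, by exploiting that the ancestor line bundles $\bar{L}_i$ are pulled back from the low-dimensional Deligne--Mumford space $\bar{\mathcal{M}}_{0,N}$. Recall that $\bar{\mathcal{F}}_0(\bar{\t}) = \sum_{d,\ell,\bar{\ell}} \frac{Q^d}{\ell!\bar{\ell}!}\<\bar{\t}_1(\bar{L}),\cdots;\cdots,\bar{\t}_r(\bar{L}),\cdots,\tau_r,\cdots\>_{0,\ell+\bar{\ell},d}$, and that in each term the classes $\bar{L}_i$ at the $N:=\sum_r r\ell_r$ ancestor marked points are pull-backs of universal cotangent bundles along the forgetful map $\pi\colon\M_{0,\ell+\bar{\ell}}(X,d)\to\bar{\mathcal{M}}_{0,N}$; in particular only cycle-types with $N\geq 3$ occur. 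First I would fix such a term and use the hypothesis $\bar{\t}_r(1)=0$ to write $\bar{\t}_r(q)=(q-1)\tilde{\t}_r(q)$ with $\tilde{\t}_r\in K[q^{\pm}]$; then the insertion at the $i$-th ancestor point is $(\bar{L}_i-1)\tilde{\t}_r(\bar{L}_i)$, and the product of all $N$ ancestor insertions contains the factor $\prod_{i=1}^N(\bar{L}_i-1)=\pi^\ast\big(\prod_{i=1}^N(L_i-1)\big)$, where $L_i$ denotes the universal cotangent bundle on $\bar{\mathcal{M}}_{0,N}$.

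Next I would invoke a dimension bound on $\bar{\mathcal{M}}_{0,N}$. Each $L_i-1$ has virtual rank $0$, hence lies in $F^1_\gamma K^0(\bar{\mathcal{M}}_{0,N})$, so $\prod_{i=1}^N(L_i-1)\in F^N_\gamma K^0(\bar{\mathcal{M}}_{0,N})$; since $\bar{\mathcal{M}}_{0,N}$ is a smooth projective variety of dimension $N-3<N$, this group vanishes. Equivalently, in the fake picture $ch(L_i-1)=e^{c_1(L_i)}-1$ lies in $H^{\geq 2}(\bar{\mathcal{M}}_{0,N})$, so $\prod_i ch(L_i-1)\in H^{\geq 2N}(\bar{\mathcal{M}}_{0,N})=0$. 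Either way $\prod_{i=1}^N(\bar{L}_i-1)=0$ in $K^0(\M_{0,\ell+\bar{\ell}}(X,d))$, so the virtual sheaf $\mathcal{O}^{vir}\otimes(\otimes_{r,k}T_{r,k})$ whose $h$-supertrace defines the correlator is the zero class, and its supertrace vanishes; concretely, in the formula of Proposition 3.1 the relevant integrand over $I^h\M_{0,\ell+\bar{\ell}}(X,d)$ still carries the pulled-back factor $\prod_i(e^{c_1(L_i)}-1)$ from $\bar{\mathcal{M}}_{0,N}$ and hence is $0$. Summing over all $d,\ell,\bar{\ell}$ then yields $\bar{\mathcal{F}}_0(\bar{\t})=0$.

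The main point requiring care is that this pull-back structure survives passage to the fixed/inertia locus that computes the supertrace: even when $h$ permutes the ancestor marked points nontrivially, the $\bar{L}_i$ factor through $I^h\M_{0,\ell+\bar{\ell}}(X,d)\hookrightarrow\M_{0,\ell+\bar{\ell}}(X,d)\to\bar{\mathcal{M}}_{0,N}$, so the vanishing of $\prod_i(L_i-1)$ on $\bar{\mathcal{M}}_{0,N}$ still forces the whole integrand to vanish. The only other thing to verify is that $F^N_\gamma K^0(\bar{\mathcal{M}}_{0,N})=0$ over the relevant coefficient ring, which holds because $\bar{\mathcal{M}}_{0,N}$ has a cellular decomposition, so its $K$-theory is torsion-free and the $\gamma$-filtration agrees with the codimension filtration; alternatively one sidesteps this entirely by arguing in the fake picture, where only the elementary bound $\dim_{\mathbb{C}}\bar{\mathcal{M}}_{0,N}=N-3<N$ is used. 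I expect this bookkeeping, rather than any substantive geometry, to be the only real obstacle.
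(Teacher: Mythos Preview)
Your overall strategy---a dimension count against the target $\bar{\mathcal{M}}_{0,N}$---is the right instinct, but the implementation has a genuine gap exactly at the point you flag as ``requiring care.'' The vanishing $\prod_{i=1}^N(L_i-1)=0$ holds only in the \emph{non-equivariant} $K^0(\bar{\mathcal{M}}_{0,N})$; this does not force the $h$-equivariant class to vanish, and the supertrace is sensitive to the equivariant structure. Concretely, in Proposition~3.1 the integrand on a component of $I^h\M$ carries $ch\big(str_{\tilde h}(\cdots)\big)$, and $str_{\tilde h}$ is a ring homomorphism applied to the equivariant class, not the restriction of the ordinary Chern character. For a single $h$-orbit $O$ of length $r$, the formula $str_{\tilde h}(V^{\otimes r})=\Psi^r(str_{\tilde h^r}V)$ gives
\[
str_{\tilde h}\Big(\textstyle\prod_{i\in O}(\bar L_i-1)\Big)=\zeta_O\,\bar L_O^{\,r}-1,
\]
where $\zeta_O$ is the eigenvalue of $\tilde h^r$ on the cotangent line. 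When $\zeta_O\neq 1$ (i.e.\ the orbit sits at a ramification point of the curve automorphism), this factor has nonzero constant term and contributes nothing to the codimension count. So the product over all orbits is \emph{not} the pulled-back $\prod_i(e^{c_1(L_i)}-1)$ you claim; only the unramified orbits help.

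To finish, one must therefore bound the number of unramified orbits against the dimension of the relevant fixed-locus component, and this depends on the combinatorics of how the $h$-fixed curves degenerate. That is precisely what the paper's proof does: it stratifies $I^h\M$ by decorated graphs, chooses a leaf vertex $p$ of highest level (which, for $g=0$ vertices, has at most two ramified half-edges and at most one edge), and performs the dimension count locally on the quotient moduli $\mathcal{M}_p$ using the factors $(L_i^m-1)$ coming from the unramified tails. Your global bound $N>\dim\bar{\mathcal{M}}_{0,N}=N-3$ is too coarse to see this; the honest count is (unramified orbits at $p$) $>\dim\mathcal{M}_p$, which is what the graph analysis supplies.
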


This proposition is a direct corollary of the following vanishing result.

\begin{lemma}
Assume that all $\bar{\t}_{r,k}$'s but at most $\bar{\t}_{1,1},\bar{\t}_{1,2}$ vanish at $1$.
Then the correlator 
\[\left<\!\left<\bar{\mathbf{t}}_{1,1}(\bar{L}), \cdots, \bar{\mathbf{t}}_{1,\ell_1}(\bar{L}); \cdots, \bar{\mathbf{t}}_{r,k}(\bar{L}), \cdots\right>\!\right>_{0,\ell}\]
vanishes.
\end{lemma}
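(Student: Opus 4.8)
\emph{Proof plan.} The plan is to exhibit, in the integrand of every term of the correlator, a tensor factor that already vanishes in $K^0\otimes\mathbb{Q}$ of the ancestor moduli space for dimensional reasons. Write $n=\sum_r r\ell_r$ for the number of distinguished marked points; stability of $(0,\ell)$ forces $n\geq 3$. Since $\bar{\t}_{r,k}(1)=0$ for every pair $(r,k)\neq(1,1),(1,2)$, elementary division in $K[q^{\pm}]$ gives $\bar{\t}_{r,k}(q)=(q-1)g_{r,k}(q)$ with $g_{r,k}\in K[q^{\pm}]$, hence $\bar{\t}_{r,k}(\bar L_i)=(\bar L_i-1)g_{r,k}(\bar L_i)$ at each corresponding marked point $i$. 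Unfolding
\[\left<\!\left<\bar{\mathbf{t}}_{1,1}(\bar{L}), \cdots ; \cdots, \bar{\mathbf{t}}_{r,k}(\bar{L}), \cdots\right>\!\right>_{0,\ell}=\sum_{\bar{\ell},d}\frac{Q^d}{\bar{\ell}!}\left(\prod_r r^{\ell_r}\right)\left<\bar{\t}_{1,1}(\bar L),\cdots,\tau_1,\cdots;\cdots\right>_{0,\ell+\bar{\ell},d},\]
every correlator on the right is a super-trace over $\M_{0,n+\bar{\ell}}(X,d)$ whose integrand contains the factor $\prod_{i\in S}(\bar L_i-1)$, where $S$ is the set of distinguished marked points carrying a constrained input. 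At most the one or two marked points of $\bar{\t}_{1,1},\bar{\t}_{1,2}$ lie outside $S$, so $|S|\geq n-2$.

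Next I would invoke the definition of the ancestor classes: for a distinguished marked point $i$, the line bundle $\bar L_i$ is the pull-back of the cotangent line $\mathbb{L}_i$ on $\bar{\mathcal{M}}_{0,n}$ along the forgetful (contraction) map $f\colon\M_{0,n+\bar{\ell}}(X,d)\to\bar{\mathcal{M}}_{0,n}$ that forgets the $\tau$-marked points and the map to $X$. Now $\bar{\mathcal{M}}_{0,n}$ is a smooth projective variety of dimension $n-3$, and $\mathrm{ch}(\mathbb{L}_i-1)$ is supported in cohomological degrees $\geq 2$; hence the product of $|S|\geq n-2$ of these classes has Chern character supported in degrees $\geq 2(n-2)>2(n-3)=\dim_{\mathbb{R}}\bar{\mathcal{M}}_{0,n}$, so it is $0$ in $K^0(\bar{\mathcal{M}}_{0,n})\otimes\mathbb{Q}$, and a fortiori with $\Lambda$-coefficients. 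Applying $f^{\ast}$, the factor $\prod_{i\in S}(\bar L_i-1)$ vanishes in $K^0(\M_{0,n+\bar{\ell}}(X,d))\otimes\Lambda$; therefore the virtual bundle $\mathcal{O}^{vir}\otimes(\text{inputs})$ in each super-trace is $0$, each correlator on the right is $0$, and summing over $\bar{\ell},d$ yields the lemma.

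The only delicate ingredient is this dimensional vanishing $\prod_{i\in S}(\bar L_i-1)=0$: it works precisely because $\bar{\mathcal{M}}_{0,n}$ is the honest Deligne--Mumford space of dimension $n-3$, with no virtual-dimension inflation --- which is exactly why the statement concerns ancestor ($\bar L$) rather than descendant ($L$) correlators and is confined to genus $0$, and why the two exceptional inputs are exactly enough slack ($n-2$ factors still exceed $n-3$). Everything downstream is formal: by Proposition 3.1 the super-trace (fake Euler characteristic over inertia strata, together with the residual $h$-action) depends only on the $K$-theory class of $\mathcal{O}^{vir}$ tensored with the inputs, so "a tensor factor of the integrand is $0$'' immediately forces the contribution to be $0$; no permutation-equivariant refinement of the vanishing is needed, and Proposition 3.2 follows by taking $\bar\t_1=\bar\t_2=0$ at $q=1$ as well, so that no inputs are exceptional.
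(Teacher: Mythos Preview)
Your global dimension argument does not close. The vanishing $\prod_{i\in S}(\bar L_i-1)=0$ in $K^0(\bar{\mathcal M}_{0,n})\otimes\mathbb Q$ is a \emph{non-equivariant} statement, whereas by Proposition~3.1 the correlator is a fake Euler characteristic over $I^h\mathcal M$ of $str_{\tilde h}(V/\wedge^*N^*)$; what matters is the $h$-equivariant class of the integrand on each inertia stratum, not its underlying $K$-class on $\bar{\mathcal M}_{0,n}$. Your sentence ``no permutation-equivariant refinement of the vanishing is needed'' is exactly the unjustified step. A clean illustration that non-equivariant vanishing does not kill the super-trace: take $\mathbb P^1$ with the involution $z\mapsto -z$ and $L=\mathcal O(1)$ linearized with weights $0,1$ on $x,y$; then $(L-1)^2=0$ in $K^0(\mathbb P^1)$, but $str_h H^*(\mathbb P^1,(L-1)^2)=1-2\cdot 0+1=2$.

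Concretely, on an $h$-fixed stratum the factors in $\prod_{i\in S}(\bar L_i-1)$ collapse orbit-by-orbit: $str_{\tilde h}$ of the sub-product over an $r$-cycle is $\Psi^r(\bar L-1)=\bar L^r-1$, one factor per \emph{orbit}. So the Chern character of $str_{\tilde h}(\prod_{i\in S}(\bar L_i-1))$ is only forced into degree $\geq(\text{number of $h$-orbits in }S)$, which can be far smaller than $|S|$; your global bound $|S|>n-3$ does not convert into the inequality (orbits in $S)>\dim(\text{stratum})$ that you would actually need. This is precisely why the paper does not argue globally: it decomposes $I^h\bar{\mathcal M}_{0,n}$ by decorated graphs, picks on each graph a vertex $p$ of maximal level with at most one edge, and runs the dimension count on the local quotient moduli $\mathcal M_p$ (of dimension $k-1$ when $p$ has $k+2$ half-edges), where the number of surviving factors $(\bar L^{m}_i-1)$ at $p$ genuinely exceeds $\dim\mathcal M_p$. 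Your idea---a dimension count against factors of $(\bar L-1)$---is the right one, but it has to be executed stratum-by-stratum on the inertia, as the paper does, not once on $\bar{\mathcal M}_{0,n}$.
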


\begin{proof}
For a graph to represent a non-empty family of $g=0$ curves, each vertex of level $r$ must satisfy one of the following conditions.
\begin{itemize}
\item
Two half-edges are of levels $r$ with eigenvalues roots of unities $\zeta, \zeta^{-1} \neq 1$.
All other half edges are of levels $rm(\zeta)$ and have eigenvalues $1$.
\item
All half edges are of level $r$ and have eigenvalues $1$.
\end{itemize}

Let $\Gamma$ be a decorated graph that labels the stratum $\mathcal{M} \subseteq I^h\M_{0,n}$.
Unless $\Gamma$ contains only one vertex $p$ (and no edges), let $p$ be the vertex of the highest level with only $1$ half-edge that forms edges.
In the latter case, since the graph is a tree, vertices with at most $1$ edges exist.
It follows from the choice of $p \in \Gamma$ and $g=0$ that $p$ must be connected to a half-edge along a half-edge of level $r(p)$.
Let $\mathcal{M}_p$ be the moduli space of curves defined by $p$, quotient by the $h$ action, which is a connected component of the space of stable maps to $B\mathbb{Z}_{mr}$.

Let $\mathcal{X} \subseteq I^h\M_{0,n+\bar{n}}(X,d)$ be the stratum mapped to $\mathcal{M}$ under the forgetful map $\M_{0,n+\bar{n}}(X,d) \to \M_{0,n}$.
The natural map $\mathcal{M} \to \mathcal{M}_p$ respects universal cotangent bundles at marked points in $\mathcal{M}_p$.
Therefore, at marked points in $\mathcal{X}$ that come from marked points in $\mathcal{M}_p$, the bundles $\bar{L}$ are the pull-back of bundles $L$ along the composition $\mathcal{X} \to \mathcal{M} \to \mathcal{M}_p$.
On the other hand, if $p$ has $k+2$ half edges, then $\mathcal{M}_p$ is of dimension $k-1$.
So by dimension arguments, the intersection $\chi^{fake}(\mathcal{M}_p, \prod_{i=1}^k (L^m_i-1) A) = 0$ for any $A$.
So by Proposition 3.1, the contribution to the correlators of the fixed loci $\mathcal{X}$ is $0$.
Summing over all $h, \mathcal{X}$ proves the lemma.
\end{proof}

\subsection{A recursive formula for $\tau$}
\

We define a map $T: K^\infty \to K^\infty$, by mapping $^1\tau = (^1\tau_1,\cdots,^1\tau_r, \cdots)$ to $^2\tau = (^2\tau_1,\cdots,^2\tau_r, \cdots)$, with
\[^2\tau_r = \t_r(1) + R_r\left[\dsum_{\alpha,\beta} \<\!\<LD\t_r(L), \phi_\alpha\>\!\>_{0,2_1}G_r^{\alpha\beta}\phi_\beta\right],\]
where the $\tau$ part in the double-bracket correlators is $^1\tau$, and $D\t(L)=\frac{\t(1)-\t(L)}{1-L}$.

We prove the following proposition in this section.

\begin{prop}
The sequence $\tau^{(n)} = T^n(0)$ converges to some $\tau \in K^\infty$ that makes $\bar{\t}(1)=0$.
\end{prop}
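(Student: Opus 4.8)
The plan is to exhibit $T$ as a strict contraction for the $\Lambda_+$-adic filtration on $K^\infty$, conclude that the iterates $\tau^{(n)}=T^n(0)$ converge to a limit $\tau$, observe that $\tau$ is then automatically a fixed point of $T$, and finally check that the fixed-point equation $T(\tau)=\tau$ is precisely the condition $\bar{\t}(1)=0$. For the contraction estimate, the basic point is that, since the coordinates of $\t_r$ lie in $\Lambda_+^r$, so do those of the $\Lambda$-linear combination $LD\t_r(L)$, and that every double-bracket correlator $\<\!\<LD\t_r(L),\phi_\alpha\>\!\>_{0,2_1}$ carries at least one extra factor of $\Lambda_+$ --- a Novikov variable when the degree is nonzero, a $\tau$-insertion (each $\tau_k$ having coordinates in $\Lambda_+^k\subseteq\Lambda_+$) when it is zero --- so that this correlator has coordinates in $\Lambda_+^{r+1}$. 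Since $[G^{\alpha\beta}]$ is a unit perturbation of the invertible $[(\phi_\alpha,\phi_\beta)]$ and $R_r$ (replacing $\tau_k$ by $\tau_{kr}$) never lowers filtration, $T$ sends any sequence whose $r$-th component has coordinates in $\Lambda_+^r$ to another such, and in fact $T(\tau)_r\equiv\t_r(1)\pmod{\Lambda_+^{r+1}}$. Moreover, if two such sequences $\sigma,\sigma'$ satisfy $\sigma_k-\sigma'_k\in\Lambda_+^{k+N}$ (coordinatewise) for all $k$, then --- the correlators being $\Lambda_+$-adically convergent power series in the $\tau$-coordinates --- moving the origin from $\sigma'$ to $\sigma$ changes $\<\!\<LD\t_r(L),\phi_\alpha\>\!\>_{0,2_1}$ by an element with coordinates in $\Lambda_+^{r+N+1}$ (the factor $LD\t_r(L)\in\Lambda_+^r$ is retained and at least one further factor in $\Lambda_+^{1+N}$ is produced), while it changes $G_{\alpha\beta}$ only by an element in $\Lambda_+^{N+1}$, which is in turn multiplied by the already-small $\<\!\<LD\t_r(L),\phi_\alpha\>\!\>_{0,2_1}\in\Lambda_+^{r+1}$. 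Hence $T(\sigma)_r-T(\sigma')_r$ has coordinates in $\Lambda_+^{r+N+1}$, i.e.\ $T$ strictly raises filtration depth.

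Applying this with $\sigma=\tau^{(n)}$, $\sigma'=\tau^{(n-1)}$, and using $\tau^{(0)}=0$ together with $\tau^{(1)}_r\in\Lambda_+^r$, induction on $n$ gives $\tau^{(n+1)}_r-\tau^{(n)}_r\in\Lambda_+^{r+n}$ for all $r$; thus $(\tau^{(n)})_n$ is Cauchy componentwise in the $\Lambda_+$-adic topology. Since $\bigcap_n\Lambda_+^n=0$, and taking $\Lambda$ to be $\Lambda_+$-adically complete (otherwise pass to its completion), the limit $\tau=(\tau_r)_r\in K^\infty$ exists and has $\tau_r\in\Lambda_+^r$. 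The same estimate makes $T$ uniformly continuous, so $\tau=\lim_n T(\tau^{(n)})=T(\tau)$.

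It remains to relate $T(\tau)=\tau$ to $\bar{\t}(1)=0$. Specialising at $q=1$ the explicit formula for $[\S_r(q)(\phi q^a)]_+$ obtained at the end of Section~2.3 (the passage that identifies $\left<\t_{r,k}(L)\right|_r$ with $\left<[\S_r(\bar L)\t_{r,k}(\bar L)]_+\right|_r$), and recombining its lowest term with the pairing contribution into $\psi(1)$, one gets for any Laurent polynomial $\psi$
\[
[\S_r\psi]_+(1)=\psi(1)+R_r\left[\sum_{\alpha,\beta}\<\!\<LD\psi(L),\phi_\alpha\>\!\>_{0,2_1}G^{\alpha\beta}\phi_\beta\right],
\]
all double-brackets and $G$ formed with origin $\tau$. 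For $\psi=\t_r$ the right-hand side is exactly $T(\tau)_r$. Since $\tau_r$ is independent of $q$, $D\tau_r\equiv0$ and $[\S_r\tau_r]_+=\tau_r$, so by linearity $[\S_r(\t_r-\tau_r)]_+(1)=T(\tau)_r-\tau_r$. Writing $\bar{\t}_r:=[\S_r(\t_r-\tau_r)]_+$ for the ancestor input attached to the splitting $\t=\tau+(\t-\tau)$ of the given descendant input (as in the $g=0$ case of Theorem~1), the fixed-point equation $T(\tau)=\tau$ is precisely $\bar{\t}_r(1)=0$ for every $r$, which is the assertion of the proposition.

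The technical heart is the filtration bookkeeping of the first paragraph --- following how powers of $\Lambda_+$ propagate through $LD\t_r(L)\in\Lambda_+^r$, through the Novikov variable or $\tau$-insertion supplied by each correlator, through the reindexing $R_r$, and through the inversion of $G$. The conceptual step is the last one, which is only a repackaging at $q=1$ of the $\S$-computation of Section~2 together with the string equation; once it is in place, Proposition~3.2 yields $\bar{\mathcal{F}}_0(\bar{\t})=0$ for this $\tau$, the input needed for the $g=0$ reconstruction.
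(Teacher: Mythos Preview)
Your proof is correct and follows the same two-step architecture as the paper: first show $T$ is a contraction for the $\Lambda_+$-adic filtration so that $T^n(0)$ converges to a fixed point, then identify the fixed-point equation $T(\tau)=\tau$ with $\bar{\t}(1)=0$.

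The implementations differ in two minor but instructive ways. For the contraction, the paper (Lemma~3.2) packages the argument more compactly: it observes that $T(\tau)_r-\t_r(1)$ is obtained by pairing the two-point generating function $\sum_{\alpha,\beta,\gamma}\langle\!\langle\frac{\phi^\gamma}{1-qL},\phi_\alpha\rangle\!\rangle_{0,2_1}\phi_\gamma\otimes G^{\alpha\beta}\phi_\beta$ against an element of $\Lambda_+$ via $\Omega_1$, so a change of $\tau$ by $\Lambda_+^n$ propagates to $T(\tau)$ as $\Lambda_+^{n+1}$; your bookkeeping with the componentwise filtration $\tau_r\in\Lambda_+^{r+N}$ is finer but amounts to the same mechanism. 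For the fixed-point characterization, the paper (Lemma~3.3) works with $[\S_1(\t_1+1-q)]_+(1)$ and then invokes Lemma~2.3 to cancel the contribution of $-L$ against $\tau_1$; you instead evaluate $[\S_r\psi]_+(1)=\psi(1)+R_r[\sum\langle\!\langle LD\psi(L),\phi_\alpha\rangle\!\rangle G^{\alpha\beta}\phi_\beta]$ directly and subtract $[\S_r\tau_r]_+(1)=\tau_r$ by linearity, which sidesteps Lemma~2.3 entirely. Both routes land on the same identity $\bar{\t}_r(1)=T(\tau)_r-\tau_r$.
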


The map $T$ is a contraction mapping.

\begin{lemma}
If $^1\tau - ^2\tau \in \Lambda_+^n$, then $T(^1\tau)-T(^2\tau) \in \Lambda_+^{n+1}$.
\end{lemma}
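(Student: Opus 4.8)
The plan is to show that $T$ is a contraction for the $\Lambda_+$-adic filtration by tracking the order of each ingredient in its defining formula. Fix the descendant input $\t$ (by the standing hypothesis of this section $\t_r\in\Lambda_+^r$), and write $F_r(\tau)=\sum_{\alpha,\beta}\<\!\<LD\t_r(L),\phi_\alpha\>\!\>_{0,2_1}\,G_r^{\alpha\beta}\,\phi_\beta$, so that $T(\tau)_r=\t_r(1)+R_r[F_r(\tau)]$. The constant term $\t_r(1)$ does not see $\tau$ and cancels in the difference, and $R_r$ only relabels the ancestor variables $\tau_k\mapsto\tau_{kr}$, so it carries the componentwise hypothesis ${}^1\tau-{}^2\tau\in\Lambda_+^n$ to the same kind of hypothesis on the relabelled arguments. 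Hence it suffices to prove: for any two arguments $\sigma,\sigma'$ whose components lie in $\Lambda_+$ with $\sigma-\sigma'\in\Lambda_+^n$, one has $F_r(\sigma)-F_r(\sigma')\in\Lambda_+^{n+1}$.

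Two structural observations drive the estimate. First, $LD\t_r(L)=L\cdot\tfrac{\t_r(1)-\t_r(L)}{1-L}$ is a Laurent polynomial in $L$ all of whose coefficients lie in $\Lambda_+^r$; since the double bracket is $\Lambda$-linear in its first entry, $\<\!\<LD\t_r(L),\phi_\alpha\>\!\>_{0,2_1}\in\Lambda_+^r\subseteq\Lambda_+$ for \emph{every} value of the ancestor input. Second, expanding $\<\!\<LD\t_r(L),\phi_\alpha\>\!\>_{0,2_1}$ (and likewise $\<\!\<\phi_\alpha,\phi_\beta\>\!\>_{0,2}$, which enters $G_{r,\alpha\beta}=(\phi_\alpha,\phi_\beta)+R_r\<\!\<\phi_\alpha,\phi_\beta\>\!\>_{0,2}$) as a series in the Novikov variables and in the $\tau_s$'s, every monomial that is not $\tau$-independent carries a factor $\tau_s$ for some $s$ (for $\<\!\<\phi_\alpha,\phi_\beta\>\!\>_{0,2}$ a $\tau$-independent monomial forces $d\ne 0$, hence a positive Novikov power, which is again in $\Lambda_+$). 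Consequently, for any series $P(\tau)$ of this shape and any $\sigma,\sigma'$ with components in $\Lambda_+$ and $\sigma-\sigma'\in\Lambda_+^n$, the $\tau$-independent part of $P$ drops out of $P(\sigma)-P(\sigma')$ and each surviving monomial $\prod_i\tau_{s_i}$ contributes $\prod_i\sigma_{s_i}-\prod_i\sigma'_{s_i}$, which telescopes into a sum of terms each carrying one factor $\sigma_{s_j}-\sigma'_{s_j}\in\Lambda_+^n$; so $P(\sigma)-P(\sigma')\in\Lambda_+^n$.

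Now assemble. Abbreviate $a_\alpha^{(i)}=\<\!\<LD\t_r(L),\phi_\alpha\>\!\>_{0,2_1}$ and $b_{\alpha\beta}^{(i)}=G_r^{\alpha\beta}$ evaluated at $\sigma$ ($i=1$) and $\sigma'$ ($i=2$), and use $a^{(1)}b^{(1)}-a^{(2)}b^{(2)}=(a^{(1)}-a^{(2)})b^{(1)}+a^{(2)}(b^{(1)}-b^{(2)})$. By the second observation applied to $\<\!\<LD\t_r(L),\phi_\alpha\>\!\>_{0,2_1}$, together with the first observation (each of its monomials carries a coefficient in $\Lambda_+^r$ coming from $\t_r$), $a_\alpha^{(1)}-a_\alpha^{(2)}\in\Lambda_+^{r+n}\subseteq\Lambda_+^{n+1}$, and also $a_\alpha^{(2)}\in\Lambda_+$. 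For the matrix entries, $[G_{r,\alpha\beta}]=[g_{\alpha\beta}]+[\varepsilon_{\alpha\beta}]$ with $g_{\alpha\beta}=(\phi_\alpha,\phi_\beta)$ invertible over the coefficient ring and $\varepsilon_{\alpha\beta}\in\Lambda_+$, so $[G_r^{\alpha\beta}]$ exists as a Neumann series with entries in $\Lambda$; from $[\varepsilon_{\alpha\beta}^{(1)}]-[\varepsilon_{\alpha\beta}^{(2)}]\in\Lambda_+^n$ (second observation for $\<\!\<\phi_\alpha,\phi_\beta\>\!\>_{0,2}$) and $A^{-1}-B^{-1}=A^{-1}(B-A)B^{-1}$ we get $b_{\alpha\beta}^{(1)}-b_{\alpha\beta}^{(2)}\in\Lambda_+^n$. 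Both summands therefore lie in $\Lambda_+^{n+1}$; summing over $\alpha,\beta$ and multiplying by $\phi_\beta$ gives $F_r(\sigma)-F_r(\sigma')\in\Lambda_+^{n+1}$, and since this holds for every $r$, $T({}^1\tau)-T({}^2\tau)\in\Lambda_+^{n+1}$.

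The only delicate point, and the place where care is needed, is the $\Lambda_+$-adic bookkeeping that makes all of the above legitimate: one needs the double-bracket correlators to be well-defined elements of $\Lambda$ so that the series over Novikov degree and over number of $\tau$-insertions converge (using $\t_r,\tau_s\in\Lambda_+^r$ and $Q^d\in\Lambda_+$), the pairing matrix $[g_{\alpha\beta}]$ to be invertible so that $G_r^{\alpha\beta}$ is defined, and $\Lambda_+^{m}$ to be closed (equivalently $\bigcap_m\Lambda_+^m=0$ together with completeness of $\Lambda$) so that the term-by-term order estimates survive passage to the infinite sums. Granting these standard facts, which are in force under the assumptions of this section, the contraction estimate follows — and with it, by completeness, the convergence of the iterates $T^n(0)$ asserted in the preceding proposition.
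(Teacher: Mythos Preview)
Your argument is correct and follows essentially the same route as the paper: reduce to the first sector via $R_r$, observe that the universal $2$-point data (correlators together with $G_r^{\alpha\beta}$) is congruent modulo $\Lambda_+^n$ under the hypothesis, and pick up the extra factor of $\Lambda_+$ from the first input $LD\t_r\in\Lambda_+$. The paper packages this last step by pairing the universal correlator $\sum\phi^\gamma\<\!\<\phi_\gamma/(1-qL),\phi_\alpha\>\!\>G_1^{\alpha\beta}\phi_\beta$ against $\mathfrak{t}=LD\t_r$ via $\Omega_1$, whereas you unbundle the two factors and apply Leibniz $a^{(1)}b^{(1)}-a^{(2)}b^{(2)}=(a^{(1)}-a^{(2)})b^{(1)}+a^{(2)}(b^{(1)}-b^{(2)})$ directly; the content is the same.
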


\begin{proof}
The argument on a general sector is applying $R_r$ to the argument in the sector $\mathcal{K}_1$, so we only consider the sector $\mathcal{K}_1$.
From $^1\tau \equiv ^2\tau$ modulo $\Lambda_+^n$, we know that
\[\dsum_{\alpha,\beta,\gamma} \<\!\<\dfrac{\phi^\gamma}{1-qL}, \phi_\alpha\>\!\>_{0,2_1}\phi_\gamma \otimes G_1^{\alpha\beta}\phi_\beta\]
with inputs $^1\tau, ^2\tau$ for the $\tau$-part are congruent modulo $\Lambda_+^n$.
Now it follows from $\mathfrak{t}(q) = \t_r(q)-\t_r(1)+D\t_r(q) \in \Lambda_+$, and 
\[\dsum_{\alpha,\beta} \<\!\<\mathfrak{t}, \phi_\alpha\>\!\>_{0,2_1}G_1^{\alpha\beta}\phi_\beta = \Omega_1 \left(\dsum_{\alpha,\beta,\gamma} \<\!\<\dfrac{\phi^\gamma}{1-qL}, \phi_\alpha\>\!\>_{0,2_1}\phi_\gamma \otimes G_1^{\alpha\beta}\phi_\beta, \mathfrak{t}(q)\right)\]
that $T(^1\tau)-T(^2\tau) \in \Lambda_+^{n+1}$.
\end{proof}

Observe that $\tau^{(n)} = T^n(0)$ satisfy $\tau^{(0)}=0$ and $\tau^{(1)}=\t(1) \in \Lambda_+$.
So by Lemma 3.2 we know that $\tau^{(n-1)}-\tau^{(n)} \in \Lambda_+^n$, and $\tau^{(n)}$ converges to some fixed point $\tau \in K^{\infty}$ of $T$.

\begin{lemma}
$\bar{\mathbf{t}}(1)=0$ if and only if $\tau$ is a fixed point of $T$.
\end{lemma}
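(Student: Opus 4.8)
The plan is to reduce the lemma to the single identity, valid for every $\tau\in K^\infty$,
\[
\bar{\t}(1) = T(\tau) - \tau .
\]
Once this is known, both implications are immediate, since $\bar{\t}(1)=0$ holds precisely when $T(\tau)=\tau$. Recall (from the identification $\bar{\mathbf{x}}=[\S\mathbf{x}]_+$ in the proof of Theorem~1, together with Lemma~2.4, which gives $[\S_r(q)(1-q+\tau_r)]_+=1-q$) that the ancestor input relevant to reconstructing $\mathcal{F}_0$ at $\t$ is $\bar{\t}_r=[\S_r(q)(\t_r-\tau_r)]_+$. Since $\S_r=R_rS$, $G_r=R_rG$, and the level-$r$ correlators occurring in the definition of $T$ are $R_r$ applied to the level-$1$ ones, it suffices to prove $\bar{\t}_1(1)=T(\tau)_1-\tau_1$ and then apply $R_r$ componentwise.

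First I would record the positive-part formula. By the computation preceding Proposition~2.1, for a Laurent polynomial $\mathbf{s}(q)$ one has $\left[\frac{\mathbf{s}(q)}{1-L/q}\right]_+=\frac{q\mathbf{s}(q)-L\mathbf{s}(L)}{q-L}$, while $\frac{L\mathbf{s}(L)}{q-L}$, once inserted into $\<\!\<\cdot,\phi_\alpha\>\!\>_{0,2}$, yields a rational function of $q$ that is regular at $q=0$, vanishes at $q=\infty$, and has poles only at roots of unity, hence lies in $\bar{\mathcal{K}}_-$. Using $\mathbb{Q}((q))$-linearity of $S(q)$, this gives
\[
[\S_1(q)\mathbf{s}(q)]_+=\sum_{\alpha,\beta}\left((\mathbf{s}(q),\phi_\alpha)+\<\!\<\frac{q\mathbf{s}(q)-L\mathbf{s}(L)}{q-L},\phi_\alpha\>\!\>_{0,2}\right)G^{\alpha\beta}\phi_\beta .
\]

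Now I would specialize to $\mathbf{s}=\t_1-\tau_1$ and set $q=1$. Since $\tau_1$ is constant in $q$, a direct manipulation with $D\t(L)=\frac{\t(1)-\t(L)}{1-L}$ (which is itself a Laurent polynomial in $L$) gives $\frac{\mathbf{s}(1)-L\mathbf{s}(L)}{1-L}=\t_1(1)+L\,D\t_1(L)-\tau_1$. Substituting and splitting the summands into three groups: the pieces constant in $L$ carrying $\t_1(1)$ collapse, via $(\phi_\alpha,\psi)+\<\!\<\phi_\alpha,\psi\>\!\>_{0,2}=\sum_\gamma G_{\alpha\gamma}\psi^\gamma$ and $\sum_\alpha G_{\alpha\gamma}G^{\alpha\beta}=\delta^\beta_\gamma$, to $\t_1(1)$; the pieces carrying $-\tau_1$ collapse in the same way to $-\tau_1$; and what remains is exactly $\sum_{\alpha,\beta}\<\!\<L\,D\t_1(L),\phi_\alpha\>\!\>_{0,2_1}G_1^{\alpha\beta}\phi_\beta$. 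Hence
\[
\bar{\t}_1(1)=\t_1(1)-\tau_1+\sum_{\alpha,\beta}\<\!\<L\,D\t_1(L),\phi_\alpha\>\!\>_{0,2_1}G_1^{\alpha\beta}\phi_\beta=T(\tau)_1-\tau_1 ,
\]
and applying $R_r$ componentwise yields $\bar{\t}(1)=T(\tau)-\tau$, which proves the lemma.

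The step I expect to be the main obstacle is the evaluation at $q=1$: since $1$ is itself a root of unity, the Laurent polynomial $P$ annihilating $L$ vanishes at $1$, so $\S_1(q)\mathbf{s}(q)$ genuinely has a pole at $q=1$, and one must check that this pole is carried entirely by the $\bar{\mathcal{K}}_-$-summand $\<\!\<\frac{L\mathbf{s}(L)}{q-L},\cdot\>\!\>_{0,2}$, so that the positive part, being an honest Laurent polynomial, may be evaluated at $q=1$ without ambiguity. The rest is bookkeeping: keeping track of which quantities carry an $R_r$ and which a $\tau$-shift, and the harmless placement of $G_r^{\alpha\beta}$ relative to $R_r$ in the definition of $T$.
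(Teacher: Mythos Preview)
Your proposal is correct and follows essentially the same route as the paper: compute $[\S_1(\cdot)]_+$ via the residue formula for the positive part, evaluate at $q=1$, and simplify to obtain $\bar{\t}_1(1)=\t_1(1)-\tau_1+\sum_{\alpha,\beta}\langle\!\langle LD\t_1(L),\phi_\alpha\rangle\!\rangle_{0,2_1}G_1^{\alpha\beta}\phi_\beta=T(\tau)_1-\tau_1$, then pass to general $r$ by applying $R_r$. The only cosmetic difference is that the paper feeds $\t_1+1-q$ into $\S_1$ and uses the string/dilaton consequence $\langle\!\langle L,\phi_\alpha\rangle\!\rangle_{0,2_1}=(\tau_1,\phi_\alpha)+\langle\!\langle\tau_1,\phi_\alpha\rangle\!\rangle_{0,2_1}$ (Lemma~2.3) to produce the $-\tau_1$, whereas you feed $\t_1-\tau_1$ directly (equivalent by Lemma~2.4) and collapse the constant pieces via $(\phi_\alpha,c)+\langle\!\langle\phi_\alpha,c\rangle\!\rangle_{0,2}=\sum_\gamma G_{\alpha\gamma}c^\gamma$; both reach the same identity.
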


\begin{proof}
By using the fact
\[[f(q)]_+ = -(Res_{w=0}+Res_{w=\infty}) \frac{f(w)}{w-q}dw,\]
we get
\[[\frac{\mathbf{t}_1(q)+1-q}{1-L/q}]_+ = \frac{q(\mathbf{t}_1(q)+1-q)-L(\mathbf{t}_1(L)+1-L)}{q-L}.\]
The expression above is $(\mathbf{t}_1+D\mathbf{t}_1)(L) - L$ at $q=1$, so $\bar{\mathbf{t}}_1(1) = [\S_1(\mathbf{t}_1(q)+1-q)]_+(1)$ is
\[\sum_{\alpha,\beta} \phi_\alpha G^{\alpha\beta}((\phi_\beta,\mathbf{t}_1(1))+\<\!\<\phi_\beta,\mathbf{t}_1(L)+D\mathbf{t}_1(L)-L\>\!\>_{0,2_1}).\]
By Lemma 2.3, 
\[\<\!\<L,\phi_\alpha\>\!\>_{0,2_1}=(\tau_1,\phi_\alpha) + \<\!\<\tau_1,\phi_\alpha\>\!\>_{0,2_1},\]
and so
\[\begin{array}{ll}\bar{\mathbf{t}}_1(1) & =\dsum_{\alpha,\beta} \phi_\alpha G_1^{\alpha\beta}\left[(\phi_\beta,\mathbf{t}_1(1)-\tau_1) +\<\!\<\phi_\beta,\mathbf{t}_1(L)+D\mathbf{t}_1(L)-\tau_1\>\!\>_{0,2_1}\right]\\
& = \t_1(1) -\tau_1 + \dsum_{\alpha,\beta} \<\!\<LD\t_1(L), \phi_\alpha\>\!\>_{0,2_1}G_1^{\alpha\beta}\phi_\beta.
\end{array}\]
For general $\t_r$, the identity follows from a shift of inputs applied to the level $1$ identity above.
So $\bar{\t}(1) = T(\tau)-\tau$, and the proposition follows.
\end{proof}

\subsection{Proof of Theorem 2}
\

From (the proof of) Ancestor-Descendant correspondence, we have
\[\mathcal{F}_0(\mathbf{t}) = \bar{\mathcal{F}}_0(\bar{\mathbf{t}}) + (\left<\!\left<\mathbf{t}_2\right>\!\right>_{0,1_2} + \<\!\<\mathbf{t}_1(L) + 1-L,\mathbf{t}_1(L) +1-L\>\!\>_{0,2_1}/2),\]
where $\bar{\mathbf{t}} = [S(\t+1-q)]_++q-1$, that satisfy $\bar{\t}(1)=0$ by a smart choice of $\tau$.
By Proposition 3.2, $\bar{\mathcal{F}}_0(\bar{\mathbf{t}})=0$.

Let $\tau_1 = \sum_\alpha \tau_1^\alpha \phi_\alpha$.
By the WDVV equation (Proposition 1.4), we have 
\[\<\!\<\dfrac{\phi}{1-xL}, \dfrac{\psi}{1-yL}\>\!\>_{0,2_1} = \frac1{1-xy}(-(\phi,\psi)  + \sum_{\alpha,\beta}  A_\alpha G_1^{\alpha\beta}B_\beta),\]
where
\[\begin{array}{ll}
A_\alpha &= (\varphi,\phi_\alpha)+\dfrac{\partial}{\partial \tau_1^\alpha} \left<\!\left< \dfrac{\varphi}{1-xL}\right>\!\right>_{0,1_1},\\
B_\beta &= (\phi_\beta,\psi)+\dfrac{\partial}{\partial \tau_1^\beta} \left<\!\left<\dfrac{\psi}{1-yL}\right>\!\right>_{0,1_1},
\end{array}\]
are recovered from $1$-point correlators.

Also observe that
\[\begin{array}{ll}
\left<\!\left<\mathbf{t}_2\right>\!\right>_{0,1_2}&=\Omega_2\left(\dsum_\alpha \phi^\alpha \left<\!\left<\dfrac{\phi_\alpha}{1-qL}\right>\!\right>_{0,1_2}, \t_2(q)\right)\\
\<\!\<\mathbf{x}_1(L),\mathbf{x}_1(L)\>\!\>_{0,2_1}&=(\Omega_1 \otimes \Omega_1) \left(\dsum_{\alpha,\beta}\phi^\alpha \otimes \phi^\beta\<\!\<\dfrac{\phi_\alpha}{1-xL}, \dfrac{\phi_\beta}{1-yL}\>\!\>_{0,2_1}, \mathbf{x}_1(x)\otimes \mathbf{x}_1(y)\right),
\end{array}\]
where $\mathbf{x}_1(q) = \mathbf{t}_1(q) + 1-q$.
Therefore $\mathcal{F}_0(\mathbf{t})$ is reconstructed from 
\[\sum_\alpha \phi^\alpha \<\!\<\dfrac{\phi_\alpha}{1-qL}\>\!\>_{0,1_1}, \sum_\alpha \phi^\alpha \<\!\<\dfrac{\phi_\alpha}{1-qL}\>\!\>_{0,1_2}.\]

From Lemma 3.2, we know that to recover correlators modulo $\Lambda_+^{n+1}$, we only need the data of inputs modulo $\Lambda_+^n$, i.e., we only need to use $T^n(0)$ as inputs, so the algorithm is finite.

\section{$g=1$ invariants of point target space}
\

In this section, we reconstruct permutation equivariant quantum K-theoretic invariants in $g=1$ from general invariants in $g=0$ and 1-point invariants in $g=1$.
Theorem 1 implies
\[\mathcal{F}^1(\t) = F^1(\tau) + \bar{\mathcal{F}}^1(\bar{\t}),\]
where $(1-q+\bar{\t}) = [\mathcal{S}_\tau(1-q+\t)]_+$ and $\tau$ is chosen so that $\bar{\t}(1)=0$.
The inputs $\tau = (\tau_1,\tau_2, \cdots)$ and $\bar{\t} = (\bar{\t}_1,\bar{\t}_2,\cdots)$ are computed recursively in Section 3.3.

As before, let $\bar{\x}_r=1-q+\bar{\t}_r$.
Let $t_{r,0} = \t_r(1)$ be the constant term of $\t_r$, and $\bar{t}_{2,2} = \frac12 \bar{\t}''_2(1)$ be the coefficient of degree $2$ term of $\bar{\t}_2$ expanded at $q=1$.
Recall that 
\[\begin{array}{ll}
\mathcal{F}_{1,2}^{perm} (x) = & \dfrac1{24} \<\!\<\dfrac1{1-x\bar{L}},1,1,1\>\!\>_{1,4_1} (\bar{\x}_1(x^{-1}))^4 \left(\dfrac{\partial \tau_2}{\partial t_{2,0}}\right)^2(1+\left(\dfrac{\partial \tau_2}{\partial t_{2,0}}\right)\bar{t}_{2,2}(-x^{-1}-1)); \\
\mathcal{F}_{1,3}^{perm} (x)  = & \dfrac16 \<\!\<\dfrac1{1-x\bar{L}}, 1,1\>\!\>_{1,3_1} \bar{\x}_1^3(x^{-1})\dfrac{\partial \tau_3}{\partial t_{3,0}}; \\
\mathcal{F}_{1,4}^{perm} (x) = & \dfrac14 \<\!\<\dfrac1{1-x\bar{L}}, 1, 1\>\!\>_{1,2_1+1_2} \bar{\x}^2_1(x^{-1}) \bar{\x}_2(x^{-2}) \dfrac{\partial \tau_4}{\partial t_{4,0}}; \\
\mathcal{F}_{1,6}^{perm} (x) = &\dfrac16 \<\!\<\dfrac1{1-x\bar{L}}, 1,1\>\!\>_{1,1_1+1_2+1_3} \bar{\x}_1 (x^{-1}) \bar{\x}_2(x^{-2}) \bar{\x}_3(x^{-3})\dfrac{\partial \tau_6}{\partial t_{6,0}}.
\end{array}\]

\begin{customthm}{3}
The genus $1$ generating function 
\[\mathcal{F}_1(\t) = F_1(\tau) + \frac1{24} \log\left(\frac{\partial \tau_1}{\partial t_{1,0}}\right) + \sum_{M=2,3,4,6} \sum_{a = 0,\infty}Res_a \mathcal{F}_{1,M}^{perm}(x) \frac{dx}x.\]
\end{customthm}

Given a (virtual) sub-orbifold $\mathcal{M} \hookrightarrow \mathcal{N}$, we use $N_{\mathcal{M}/\mathcal{N}}$ for the (virtual) normal bundle of $\mathcal{M}$ in $\mathcal{N}$.

\subsection{Non-vanishing cases}
\

Recall that an $h$-invariant nodal curve is balanced if it is in the closure of smooth curves with $h$-action.

\begin{lemma}
The pre-image of unbalanced curves under the forgetful map $\M_{1,n+\bar{n}} \to \M_{1,n}$ does not contribute to $\bar{\mathcal{F}}_1(\bar{\t})$.
\end{lemma}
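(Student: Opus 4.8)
The plan is to use the supertrace localization formula (Proposition 3.1) together with the graph-sum description of fixed loci in $I^h\bar{\mathcal{M}}_{1,n}$, exactly as in the $g=0$ vanishing argument (Lemma 3.1), but now keeping track of the new features that appear in genus $1$. The strategy is: fix a decorated graph $\Gamma$ labelling a stratum $\mathcal{M}\subseteq I^h\bar{\mathcal{M}}_{1,n}$ that represents an \emph{unbalanced} $h$-invariant curve, and show that the contribution of the corresponding stratum $\mathcal{X}\subseteq I^h\bar{\mathcal{M}}_{1,n+\bar{n}}(X,d)$ to the correlator vanishes by a dimension (fake Euler characteristic) count, after which summing over all such $h$ and $\mathcal{X}$ gives the claim.

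First I would recall what it means for a genus $1$ curve with an $h$-action to be unbalanced: the curve fails to lie in the closure of smooth $h$-invariant curves, which forces its normalization/dual graph to have a special shape — the genus must be concentrated on a component whose isotropy under $h$ does not match the ``balanced'' local model at the node(s), so that the level/eigenvalue decorations on the half-edges meeting the genus-$1$ vertex do not satisfy the balancing condition used in Lemma 3.1. In graph terms, the vertex $p$ carrying the genus is of level $r$ with half-edges that are \emph{not} all of eigenvalue $1$, or the eigenvalues of the two ramification half-edges are not inverse. Next I would argue, as in Lemma 3.1, that we may pick a vertex $p$ of $\Gamma$ that has at most one edge (using that the graph's core is controlled and $g=1$), let $\mathcal{M}_p$ be the moduli of curves attached to $p$ modulo the $h$-action — now a connected component of a space of stable maps to $B\mathbb{Z}_{mr}$ of \emph{genus $1$} — and use that the forgetful map $\mathcal{X}\to\mathcal{M}\to\mathcal{M}_p$ respects universal cotangent bundles at the marked points coming from $\mathcal{M}_p$, so the $\bar{L}$'s at those points are pulled back from $\mathcal{M}_p$. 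If $p$ carries $k+1$ distinguished half-edges (counting the edge), then $\mathcal{M}_p$ has dimension $k$ in the fake sense, and the integrand restricted to $\mathcal{X}$ contains a factor $\prod_{i=1}^{k}(L_i^m-1)$ pulled back from $\mathcal{M}_p$; since $(L^m-1)$ is (fake-)nilpotent on each such factor and there are more such factors than the dimension of $\mathcal{M}_p$, the fake Euler characteristic $\chi^{fake}(\mathcal{M}_p,\prod_i(L_i^m-1)\cdot A)$ vanishes for any $A$, hence by Proposition 3.1 the stratum contributes $0$.

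The main obstacle I expect is the bookkeeping at the genus-carrying vertex: in $g=0$ every vertex that is not the unique central one has a ramification half-edge and the dimension count is clean, but in $g=1$ one must rule out the possibility that the genus-$1$ vertex has exactly the ``right'' number of marked points so that no surplus $(L^m-1)$ factor is available — and this is precisely where ``unbalanced'' is used: balanced genus-$1$ vertices (an elliptic curve with the automorphism acting freely, or a nodal cycle of rational curves permuted cyclically) are exactly the ones excluded here, and for every \emph{un}balanced configuration the local structure at $p$ forces either an extra nilpotent cotangent factor or an isotropy obstruction that kills the fake Euler characteristic. So the heart of the proof is a short case analysis of the possible decorated local pictures at a genus-$1$ or genus-$0$-cycle vertex, checking that ``unbalanced'' always produces the needed surplus factor; the rest is the same dimension argument as Lemma 3.1 applied componentwise and summed over strata.
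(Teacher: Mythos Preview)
Your framework (supertrace localization, decorated graphs, dimension counts on local moduli) is right, but there is a genuine gap in how you plan to execute it. You propose to pick a vertex $p$ with at most one edge and run a dimension argument on a possibly genus-$1$ moduli $\mathcal{M}_p$. The crucial unbalanced configuration in genus $1$, however, is a \emph{cycle} of $g=0$ vertices: the genus comes from the loop in the graph, not from any single vertex, and every vertex in the cycle has exactly two edges. In that case there is no vertex with at most one edge, so your chosen $p$ does not exist, and there is no genus-$1$ vertex on which to do the count you describe. Your closing remark about ``a genus-$0$-cycle vertex'' gestures toward this case but does not say how to treat it.

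The paper's argument avoids ever touching a genus-$1$ moduli space: it simply invokes the $g=0$ vanishing (Lemma 3.1) twice. First it prunes any trees hanging off the genus-$1$ core by applying Lemma 3.1 to the $g=0$ leaves. What remains is either a single $g=1$ vertex --- which is by definition balanced and hence not in the unbalanced locus --- or a bare cycle of $g=0$ vertices. In the cycle, each vertex has exactly two edges; if the vertex is ramified those edges sit at the two ramification points, and if it is unramified it has at most two edges. In either situation the $g=0$ criterion of Lemma 3.1 applies directly and kills the contribution. (The paper also notes that two $\mathbb{CP}^1/\mathbb{Z}_2$'s joined at an unramified pair is not a counterexample because that node is balanced.) So you should not try to manufacture a new dimension argument on a genus-$1$ piece; instead, recognize that in the graph decomposition the vertices are already balanced, ``unbalanced'' just means the graph has edges, and all the vanishing is carried by the $g=0$ lemma applied vertex-by-vertex.
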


\begin{proof}
Recall that we remember these strata by decorated graphs (\cite{perm9}, see also Section 3.1), whose vertices stand for balanced curves and edges for connecting two curves along an unbalanced node.
By Lemma 3.1, a graph does not contribute to the Ancestor invariant if it contains a $g=0$ vertex
\begin{enumerate}
\item
With no edges emerging from unramified special points;
\item
Is unramified and has at most two edges.
\end{enumerate}

Now let $\Gamma$ be the minimum part of the graph with genus $1$.
By Lemma 3.1, no edges, and hence no other vertices, can exist outside $\Gamma$.
Next, $\Gamma$ is either a $g=1$ vertex or a cycle of $g=0$ vertices.
In the latter case, each vertex on this cycle has two edges connected along ramification points (if ramified), thus does not contribute to the Ancestor invariants.
Remark that two $\mathbb{CP}^1/\mathbb{Z}_2$ connected along a pair of unramified points is illegal, as that node is balanced.
\end{proof}

All non-vanishing cases are as follows.

\begin{table}[H]
\centering       
\begin{tabular}{c | c | c | c}                       
Case & Order of $h$ & $\M$ & Notation of balanced curves in $\M^h$ \\ 
\hline\hline
&&&\\
$1$ & $1$ & $\M_{1,(\ell_1)_1}$ & $\M_{\ell_1}^1  $  \\
$2$ & $2$ & $\M_{1,4_1+(\ell_2)_2} $ & $\M_{\ell_2}^2  $  \\
$3$ & $4$ & $\M_{1,2_1+1_2+(\ell_4)_4} $ & $\M_{\ell_4}^4  $  \\
$4$ & $6$ & $\M_{1,1_1+1_2+1_3+(\ell_6)_6} $ & $\M_{\ell_6}^6  $ \\
$5$ & $3$ & $\M_{1,3_1+(\ell_3)_3} $ & $\M_{\ell_3}^3  $ \\ 
\end{tabular} 
\end{table} 

\subsection{Permutation equivariant Dilaton equations for balanced curves}
\

In this Subsection, we prove versions of $g=1$ Dilaton equation (\cite{Lee01} Sections 4.4, 4.5) for ancestor invariants in permutation equivariant cases.

\begin{prop}
\

\begin{enumerate}
\item
\[\begin{array}{ll}
&\<\!\<\dfrac1{1-p_1\bar{L}^2},\cdots, \dfrac1{1-p_4\bar{L}^2}, \dfrac1{1-q_1\bar{L}}, \cdots, \dfrac1{1-q_{\ell_2}\bar{L}}, \bar{L}-1\>\!\>_{1,4_1+(\ell_2+1)_2}\\
=& (4+2\ell_2) \<\!\<\dfrac1{1-p_1\bar{L}^2},\cdots, \dfrac1{1-p_4\bar{L}^2}, \dfrac1{1-q_1\bar{L}}, \cdots, \dfrac1{1-q_{\ell_2}\bar{L}}\>\!\>_{1,4_1+(\ell_2)_2}.
\end{array}\]
\item
\[\begin{array}{ll}
&\<\!\<1,1, 1, \dfrac1{1-q_1\bar{L}}, \cdots, \dfrac1{1-q_{\ell_3}\bar{L}}, \bar{L}-1\>\!\>_{1,3_1+(\ell_3+1)_3}\\
=& (3+3\ell_3) \<\!\<1,1,1, \dfrac1{1-q_1\bar{L}}, \cdots, \dfrac1{1-q_{\ell_2}\bar{L}}\>\!\>_{1,3_1+(\ell_3)_3}.
\end{array}\]
\item
\[\begin{array}{ll}
&\<\!\<1,1, 1, \dfrac1{1-q_1\bar{L}}, \cdots, \dfrac1{1-q_{\ell_4}\bar{L}}, \bar{L}-1\>\!\>_{1,2_1+1_2+(\ell_4+1)_4}\\
=& (4+4\ell_4) \<\!\<1,1,1, \dfrac1{1-q_1\bar{L}}, \cdots, \dfrac1{1-q_{\ell_2}\bar{L}}\>\!\>_{1,2_1+1_2+(\ell_4)_4}.
\end{array}\]
\item
\[\begin{array}{ll}
&\<\!\<1,1, 1, \dfrac1{1-q_1\bar{L}}, \cdots, \dfrac1{1-q_{\ell_4}\bar{L}}, \bar{L}-1\>\!\>_{1,1_1+1_2+1_3+(\ell_6+1)_6}\\
=& (6+6\ell_6) \<\!\<1,1,1, \dfrac1{1-q_1\bar{L}}, \cdots, \dfrac1{1-q_{\ell_2}\bar{L}}\>\!\>_{1,1_1+1_2+1_3+(\ell_6)_6}.
\end{array}\]
\end{enumerate}
\end{prop}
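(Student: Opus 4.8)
The plan is to obtain all four identities from the genus-one $K$-theoretic Dilaton equation of Lee (\cite{Lee01}, Sections 4.4--4.5), transported to the permutation-equivariant setting by means of the supertrace localization formula of Proposition 3.1. The four cases are parallel, indexed by the order $r=2,3,4,6$ of $h$, which is also the size of the free $h$-orbit of marked points one inserts. \emph{Step 1 (forget the inserted orbit).} Let $ft$ be the forgetful map on the relevant moduli of stable maps (here $X=pt$) that forgets this new $h$-orbit of marked points carrying $\bar{L}-1$ and stabilizes. Since ancestor cotangent lines are compatible with forgetting extra marked points, each remaining input --- the classes $\frac1{1-p_i\bar{L}^2}$ at the ramification points, the $\frac1{1-q_j\bar{L}}$ at the free orbits, and the constants $1$ --- is a pullback along $ft$; by the projection formula it then suffices to compute $ft_\ast$ of $(\bar{L}_1-1)\otimes\cdots\otimes(\bar{L}_r-1)$, with $h$ cyclically permuting the $r$ tensor factors, and substitute the result back into the double-bracket correlator.

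\emph{Step 2 (reduce to the quotient orbicurve).} By Proposition 3.1 the supertrace localizes to the inertia stratum $I^h$, and over a balanced genus-one curve $C$ with its order-$r$ automorphism the corresponding fiber of $ft$ is the quotient orbicurve $\bar{C}=C/\langle h\rangle$. By Riemann--Hurwitz $\bar{C}$ is a genus-zero ``football'' whose orbifold points sit at the images of the $h$-fixed distinguished points, with orders $(2,2,2,2)$, $(3,3,3)$, $(2,4,4)$, $(2,3,6)$ in the four cases (the four Euclidean orbifold $\mathbb{P}^1$'s, as befits a genus-one total space). Applying the identity for supertraces of cyclically permuted external powers already used in Section 2.2 --- $str_h H^\ast(Z,\bar V^{\boxtimes r})=\Psi^r\, str_{h^r}H^\ast(\bar Z,\bar V)$ --- the contribution of the new orbit becomes $\Psi^r$ of a single relative genus-one Dilaton pushforward over $\bar{C}$, carried out with the $\chi^{fake}$-integrand $1/\wedge^\ast N^\ast$ of Proposition 3.1 at the orbifold points.

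\emph{Step 3 (the genus-one Dilaton computation).} This is the core. Writing the relative cotangent line at the inserted point as $\omega_\pi(\sum_i D_i)$ over the $n$ remaining distinguished sections, the standard (non-equivariant) pushforward is
\[ R\pi_\ast\big(\omega_\pi(\textstyle\sum_i D_i)-1\big)=\mathbb{E}+\mathbb{E}^\vee+(n-2)\,\mathcal{O}, \]
with $\mathbb{E}$ the rank-one Hodge bundle of the elliptic fiber. In the equivariant supertrace over $I^h$ the automorphism $h$ acts on $\mathbb{E}=H^0(C,\omega_C)$, and on the cotangent lines at the ramification points, by roots of unity fixed by the cover; one checks that these eigenvalue contributions, together with the orbifold normal-bundle denominators on $\bar{C}$, combine to collapse the class above to $(2g-2+n)\,[\mathcal{O}]=n\,[\mathcal{O}]$, where $n$ is the number of remaining distinguished points. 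This equals $4+2\ell_2$, $3+3\ell_3$, $4+4\ell_4$, $6+6\ell_6$ respectively; feeding it back through Step 1 yields the four stated equalities.

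\emph{Main obstacle.} The genuine work lies in Step 3: the raw class $\mathbb{E}+\mathbb{E}^\vee+(n-2)\mathcal{O}$ is \emph{not} a multiple of $[\mathcal{O}]$, so one must track carefully the $h$-eigenvalues on the Hodge line of the elliptic cover and on the cotangent lines at the fixed points, and verify that after weighting by $1/\wedge^\ast N^\ast$ on the genus-zero quotient all the ``Hodge corrections'' cancel and only $2g-2+n$ survives. A subsidiary point, needed to legitimize the projection formula in Step 1, is the orbifold-pullback statement for the ramification-point inputs --- this is the origin of the square on $\bar{L}$ at those points, the exponent being the order of the local isotropy group.
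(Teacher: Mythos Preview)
Your outline has the right skeleton --- reduce to Lee's genus-one Dilaton formula and then kill the Hodge corrections --- but two of the three steps do not go through as written, and the paper's actual arguments are different in both places.

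\textbf{Step 2.} The identity $str_h H^\ast(Z,\bar V^{\boxtimes r})=\Psi^r\,str_{h^r}H^\ast(\bar Z,\bar V)$ from Section 2.2 was for an \emph{external} product $Z=\bar Z^r$ with $h$ permuting the factors. Here the $r$ new marked points sit on a single curve; the forgetful map does not factor through an $r$-fold product, so that identity is not available. The paper's replacement is to introduce a notion of \emph{$h$-equivalence} of virtual bundles (equality after restriction to every inertia stratum of $I^h\M$) and to factor the $M$-point forgetful map as a composition of one-point forgetful maps $ft_1,\dots,ft_M$. Certain divisor classes $D_i^1,D_i^2,D_i^{12}$ that obstruct a clean product formula are shown to be $h$-equivalent to pullbacks, after which projection formula gives $ft_\ast$ of the generating function as a genuine product of $M$ single-point pushforwards, each computed via \cite{Lee01}. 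The upshot (Lemmas 4.2--4.3) is that $ft_\ast$ of the $(\bar L-1)$-insertion is $h$-equivalent to $\big((\H^\ast+\H-2)^M+M+M\ell_M\big)$ times the remaining integrand, with $h$ cyclically permuting the $M$ factors of the Hodge term.

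\textbf{Step 3.} The Hodge term does not collapse via eigenvalue weights against $1/\wedge^\ast N^\ast$, as you suggest. The paper's argument (Lemma 4.4) is a pure degree count: on any inertia stratum the extra automorphism has order dividing $M$, so it acts trivially on $str_h(\H^\ast+\H-2)^M=(\H^\ast)^M+\H^M-2$; since $\H^\ast=\H^{-1}$ this equals $(\H-1)^2/\H$ up to a unit, whose Chern character begins in (real) degree $4$, and $\lambda_1$ is pulled back from the one-dimensional $\M_{1,1}$. Hence the Hodge contribution vanishes identically --- no cancellation of normal-bundle denominators is involved. Relatedly, your Step 3 oscillates between the genus-one cover $C$ (where $\mathbb{E}+\mathbb{E}^\vee+(n-2)\mathcal{O}$ lives) and the genus-zero quotient $\bar C$ (which has no Hodge line); the paper stays upstairs on $\M_{1,n}$ throughout and never passes to the football.
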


We say that two $h$-bundles $V,V'$ over an $h$-orbifold $\M$ are {\em $h$-equivalent} (notation: $V\stackrel{h}{=} V'$), if on each strata (labeled by $\tilde{h}$) in $I^h\M$, we have $str_{\tilde{h}} V = str_{\tilde{h}} V'$ restricted as elements in $K^0(\M^{\tilde{h}})$.
Note that in particular, if $V=V'$ as elements in $K^0(\M^{h})$, then $V\stackrel{h}{=} V'$.

Let $V_1,V_2$ be $h$-orbi-bundles over $h$-orbifolds $\M_1,\M_2$.
We say $V_1$ {\em $h$-push-forwards} to $V_2$ along $ft: \M_1 \to \M_2$, if there are bundles $V'_1,V'_2$ $h$-equivalent to $V_1,V_2$, with $ft_\ast V'_1 = V'_2$.

We have the following proposition from \cite{perm9} (See also Section 3 of this paper).

\begin{prop}
Let $h$ be a generator of a cyclic group $H$, $\mathcal{M}$ an $H$-orbifold, and $V \to \mathcal{M}$ an orbi-bundle.
The super-trace $str_h H^\ast(\mathcal{M}, V)$ equals the following quantities.
\begin{enumerate}
\item
Set $E_h = \sum_\lambda \lambda^{-1} \mathbb{C}_{\lambda}$, where $\mathbb{C}_\lambda$ is the irreducible representation of $H$ on which $h$ acts as $\lambda$.
The holomorphic Euler characteristic
\[\chi\left(\mathcal{M}/H, (V \cdot E_h)/H\right) = \bar{p}_\ast \left((V \cdot E_h)/H\right),\]
where $\bar{p}$ is the map from $\mathcal{M}/H$ to a point.
\item
The Kawasaki-Riemann-Roch type integral
\[\chi^{fake}\left(I^h\mathcal{M}, str_{\tilde{h}} \frac{V}{\wedge^\ast \tilde{N}^\ast_{I^h\mathcal{M}/\mathcal{M}}}\right).\]
\end{enumerate}
\end{prop}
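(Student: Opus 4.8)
The plan is to treat the two displayed quantities separately. Item (2) is nothing but Proposition 3.1, which we import from \cite{perm9}, so the genuinely new content is item (1) together with the verification that (1) and (2) agree. I would prove (1) first by pure representation theory on cohomology. Since $H$ is finite and we are over $\mathbb{C}$, Maschke's theorem makes the functor of $H$-invariants exact, hence it commutes with (hyper)cohomology; consequently $H^\ast(\mathcal{M}, V\otimes\mathbb{C}_\lambda)^H = H^\ast(\mathcal{M}/H, (V\otimes\mathbb{C}_\lambda)/H)$ for every character $\lambda$ of $H$. Decomposing the graded space $H^\ast(\mathcal{M},V)$ into $h$-eigenspaces, one has $str_h H^\ast(\mathcal{M},V) = \sum_\lambda \lambda^{-1}\, str\,(H^\ast(\mathcal{M},V)\otimes\mathbb{C}_\lambda)^H$, because the $\mathbb{C}_\lambda$-twist followed by taking invariants picks out exactly the $\lambda^{-1}$-eigenspace, which the super-trace weights by $\lambda^{-1}$. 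Identifying each super-dimension with the Euler characteristic $\chi(\mathcal{M}/H, (V\otimes\mathbb{C}_\lambda)/H)$ and pulling the finite sum inside collapses everything to $\chi(\mathcal{M}/H, (V\cdot E_h)/H)$ with $E_h=\sum_\lambda \lambda^{-1}\mathbb{C}_\lambda$, which is precisely (1). By linearity in $V$ it suffices to treat honest bundles, so the virtual case follows formally.

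Next I would recover (2) from (1) by applying Kawasaki--Riemann--Roch to the orbifold $\mathcal{M}/H$. Writing $H=\mathbb{Z}/n=\langle h\rangle$, the inertia of $\mathcal{M}/H$ decomposes into sectors indexed by the powers $h^k$, each supported on the inertia of the fixed locus of $h^k$; KRR expresses $\chi(\mathcal{M}/H, (V\cdot E_h)/H)$ as $\tfrac1{|H|}\sum_k \chi^{fake}(\mathcal{M}^{h^k},\, str_{h^k}(V\cdot E_h)\,/\,str_{h^k}(\wedge^\ast N^\ast))$, read through the lifts $\tilde h$ when $\mathcal{M}$ carries its own isotropy. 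The decisive computation is $str_{h^k}E_h = \sum_\lambda \lambda^{k-1}$, which equals $|H|$ when $k\equiv 1 \pmod{|H|}$ and vanishes otherwise. Thus every sector except $k=1$ is annihilated, and the surviving sector carries a factor $|H|$ that cancels the orbifold normalization $1/|H|$, leaving exactly $\chi^{fake}(I^h\mathcal{M},\, str_{\tilde h} V / \wedge^\ast \tilde N^\ast_{I^h\mathcal{M}/\mathcal{M}})$, which is item (2). Since (2) coincides with Proposition 3.1, this step simultaneously re-derives it and certifies that the two expressions for $str_h H^\ast(\mathcal{M},V)$ agree.

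The main obstacle is the orbifold bookkeeping in the KRR step. Because $\mathcal{M}$ is itself a (virtual) orbifold, the ``fixed locus of $h$'' is not literally $\mathcal{M}^h$ but the union of twisted sectors $I^h\mathcal{M}$ recorded by the lifts $\tilde h=(i,h)$ of Section 3.1, and the denominator involves the refined normal bundle $\tilde N_{I^h\mathcal{M}/\mathcal{M}}$ rather than a naive one; one must check that the local group element entering $str_{\tilde h}$ is the correct composite of $h$ with the pre-existing automorphisms of $\mathcal{M}$, and that the vanishing $str_{h^k}E_h=0$ for $k\not\equiv 1$ persists sector-by-sector at this refined level. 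Tracking the single factor $|H|$ against the $1/|H|$ from the quotient, and confirming compatibility with Lee's virtual structure sheaf so that the identity holds for virtual $V$ and virtual $\mathcal{M}$, is where the care lies; the representation-theoretic argument for (1) is, by contrast, routine.
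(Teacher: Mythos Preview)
The paper does not supply its own proof of this proposition: it is introduced with the sentence ``We have the following proposition from \cite{perm9} (See also Section 3 of this paper)'' and is used as a black box. So there is nothing in the paper to compare your argument against beyond the bare citation.

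Your proposal goes further than the paper does and is essentially correct. The derivation of item~(1) is the standard character-theoretic identity: for a graded $H$-module $W$ one has $(W\otimes\mathbb{C}_\lambda)^H = W_{\lambda^{-1}}$, whence $\sum_\lambda \lambda^{-1}\,\mathrm{sdim}\,(W\otimes\mathbb{C}_\lambda)^H = \mathrm{str}_h W$; combined with the exactness of $(-)^H$ and descent of cohomology to $\mathcal{M}/H$ this gives $\chi(\mathcal{M}/H,(V\cdot E_h)/H)=\mathrm{str}_h H^\ast(\mathcal{M},V)$. Your passage from (1) to (2) via Kawasaki--Riemann--Roch, using the orthogonality relation $\mathrm{str}_{h^k}E_h=\sum_\lambda \lambda^{k-1}=|H|\,\delta_{k\equiv 1}$ to kill all sectors except $I^h\mathcal{M}$ and to cancel the $1/|H|$, is exactly the mechanism behind the Lefschetz--Kawasaki formula and matches what one finds in \cite{perm9}. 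The caveats you flag about lifts $\tilde h$ and virtual structure are genuine but routine; they do not affect the logic.
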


\begin{cor}
\

\begin{enumerate}
\item
$h$-equivalent bundles $V,V'$ define the same invariants
\[str_h H^\ast(\mathcal{M},V) = str_h H^\ast(\mathcal{M},V').\]
\item
If $V_1$ $h$-push-forwards to $V_2$ along $ft: \mathcal{M}_1 \to \mathcal{M}_2$, then
\[str_h H^\ast(\mathcal{M}_1,V_1) = str_h H^\ast(\mathcal{M}_2,V_2).\]
\end{enumerate}
\end{cor}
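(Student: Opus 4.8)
The plan is to deduce both statements directly from Proposition 4.2 together with the definitions of $h$-equivalence and $h$-push-forward just introduced; no genuinely new geometric input is needed, only the bookkeeping of restricting to fixed loci and of functoriality of push-forward. For part (1), I would invoke formula (2) of Proposition 4.2, which expresses $str_h H^\ast(\M,V)$ as the Kawasaki--Riemann--Roch integral
\[
\chi^{fake}\left(I^h\M,\ str_{\tilde{h}}\frac{V}{\wedge^\ast \tilde{N}^\ast_{I^h\M/\M}}\right),
\]
a sum over the strata of $I^h\M$. The crucial point is that the integrand on the stratum labeled $\tilde{h}$ depends on $V$ only through the class $str_{\tilde{h}}V \in K^0(\M^{\tilde{h}})$: the normal-bundle factor $\wedge^\ast \tilde{N}^\ast_{I^h\M/\M}$ is intrinsic to the embedding and independent of $V$. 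By the definition of $h$-equivalence we have $str_{\tilde{h}}V = str_{\tilde{h}}V'$ as elements of $K^0(\M^{\tilde{h}})$ on every stratum, so the two integrands agree stratum by stratum, the two integrals coincide, and $str_h H^\ast(\M,V) = str_h H^\ast(\M,V')$. This is exactly the observation, recorded after Proposition 3.1, that the super-trace depends only on the restriction of $V$ to the fixed locus $\M^h$.

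For part (2), I would first use part (1) to reduce to the case $V_2 = ft_\ast V'_1$. By the definition of $h$-push-forward there are bundles $V'_1, V'_2$ with $V'_1 \stackrel{h}{=} V_1$, $V'_2 \stackrel{h}{=} V_2$, and $ft_\ast V'_1 = V'_2$; part (1) then gives $str_h H^\ast(\M_1,V_1) = str_h H^\ast(\M_1,V'_1)$ and $str_h H^\ast(\M_2,V_2) = str_h H^\ast(\M_2,V'_2)$, so it suffices to prove $str_h H^\ast(\M_1,V'_1) = str_h H^\ast(\M_2,ft_\ast V'_1)$. This is the compatibility of the equivariant holomorphic Euler characteristic with push-forward: writing $p_{\M_i}$ for the projection of $\M_i$ to a point, we have $p_{\M_1} = p_{\M_2}\circ ft$, hence $(p_{\M_1})_\ast = (p_{\M_2})_\ast (ft)_\ast$ in $H$-equivariant K-theory, where $ft_\ast$ denotes the alternating sum of the higher direct images $R^i ft_\ast$. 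Taking $h$-super-traces of both sides yields the required identity.

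The step demanding the most care is the compatibility invoked in part (2): one must verify that $ft$ is genuinely $h$-equivariant and that the derived push-forward $ft_\ast$ carries the $H$-action, so that taking the $h$-super-trace commutes with it. Here formula (1) of Proposition 4.2 --- the honest equivariant Euler characteristic $\chi(\M/H,(V\cdot E_h)/H)$ rather than the fake KRR side --- is the cleaner tool, since functoriality of push-forward through the induced map $\M_1/H \to \M_2/H$ makes the identity $(p_{\M_1}/H)_\ast = (p_{\M_2}/H)_\ast (ft/H)_\ast$ transparent, and $E_h$ is pulled back compatibly. Once this functoriality is settled, both parts follow formally.
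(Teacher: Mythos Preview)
Your proposal is correct and follows essentially the same route as the paper: part (1) via the Kawasaki--Riemann--Roch formula (Proposition 4.2(2)), and part (2) by first reducing via part (1) to a genuine push-forward and then invoking Proposition 4.2(1) together with functoriality of push-forward through $\M_1/H \to \M_2/H$. The paper records the key identity as $\bar{ft}_\ast[(VE_h)/H] = [ft_\ast(VE_h)]/H$, which is precisely your observation that $E_h$ is pulled back compatibly combined with the projection formula.
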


\begin{proof}
The first part directly follows from the second part of the above Proposition.
The second part follows from the first part of the corollary, the first part of the proposition, and the fact that the forgetful map commutes with the quotient by $H$, as their fibers over a point are the same.
\end{proof}

With these preparations, we prove part $1$ of Proposition 4.1.
Our proof modifies that in Sections 4.4, 4.5 in \cite{Lee01}.

\begin{lemma}
Let $ft: \M_{1,4+2(\ell_2+1)} \to \M_{1,4+2\ell_2}$ be the forgetful map that forgets the last $2$ marked point (and stabilizes the result).
The generating functions
\[\dprod_{i=1}^4 \frac1{1-p_iL^2_i}\prod_{i=1}^{\ell_2} \frac1{1-q_iL_{2i+3}L_{2i+4}} \cdot (L_{2\ell_2+5}L_{2\ell_2+6} -1)\]
$h$-push-forwards along $ft$ to
\[\left((\H^\ast+\H-2)^2 + 4+ 2\ell_2 \right)\dprod_{i=1}^4 \frac1{1-p_iL^2_i}\prod_{i=1}^{\ell_2} \frac1{1-q_iL_{2i+3}L_{2i+4}}.\]
Here $h$ permutes $L_i$'s and terms in $(\H^\ast+\H-2)^2$.
\end{lemma}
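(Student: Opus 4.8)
The plan is to treat this as the order-$2$, genus-$1$ case of a permutation-equivariant K-theoretic dilaton equation, proved by pushing the indicated bundle forward over the universal curve while tracking the $\mathbb{Z}/2$-equivariance created by the transposition of the two erased marked points. Since $ft$ forgets a full $h$-orbit (the pair $\{2\ell_2+5,\,2\ell_2+6\}$), it is $h$-equivariant, so by Corollary 4.1 it suffices to exhibit an $h$-equivalent representative of the left-hand bundle whose honest $ft$-pushforward is the asserted class, and only the values $str_{\tilde{h}}$ on the strata of $I^h\M_{1,4+2\ell_2+2}$ are relevant. I would realize $ft$ through the universal curve: up to contracting the unstable rational tails that sprout when the two erased points collide or run into a node of the fibre — the resulting discrepancy between cotangent lines being exactly the kind controlled by Lemma 2.1 and Corollary 2.1 — the space $\M_{1,4+2\ell_2+2}$ is the fibered square $\mathcal{C}\times_{\M_{1,4+2\ell_2}}\mathcal{C}$ of the universal curve $\mathcal{C}=\M_{1,4+2\ell_2+1}$, with $h$ swapping the two factors and $ft$ the projection to $\M_{1,4+2\ell_2}$.

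A helpful first reduction: write $L_{2\ell_2+5}L_{2\ell_2+6}-1=(L_{2\ell_2+5}-1)(L_{2\ell_2+6}-1)+(L_{2\ell_2+5}-1)+(L_{2\ell_2+6}-1)$. On every stratum of $I^h\M_{1,4+2\ell_2+2}$ the points $2\ell_2+5$ and $2\ell_2+6$ form a genuine $h$-orbit, so $h$ interchanges $L_{2\ell_2+5}$ and $L_{2\ell_2+6}$; hence on the linear part $(L_{2\ell_2+5}-1)\oplus(L_{2\ell_2+6}-1)$ the permutation acts purely off-diagonally and has vanishing supertrace. Thus the two linear cross-terms are $h$-equivalent to $0$ and may be discarded, and it remains to push $(L_{2\ell_2+5}-1)(L_{2\ell_2+6}-1)$ times the prefactor forward along $ft$.

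For the single-point input I would invoke the genus-$1$ K-theoretic dilaton of \cite{Lee01} (Sections 4.4--4.5): for the universal curve $\pi:\M_{1,m+1}\to\M_{1,m}$, using $L_{m+1}=\omega_\pi(\sigma_1+\cdots+\sigma_m)$ ($\omega_\pi$ the relative dualizing sheaf), the restriction exact sequence $0\to\omega_\pi\to\omega_\pi(\sum_i\sigma_i)\to\bigoplus_i\omega_\pi(\sum_j\sigma_j)|_{\sigma_i}\to0$ whose quotient terms are canonically trivial (from $\sigma_i^\ast\omega_\pi=L_i$, $\sigma_i^\ast\mathcal{O}(\sigma_i)=L_i^{-1}$), and the genus-$1$ identities $R^1\pi_\ast\omega_\pi=\mathcal{O}$, $R\pi_\ast\mathcal{O}=1-\H^\ast$ (relative Serre duality), one obtains $R\pi_\ast L_{m+1}=\H+(m-1)$ and hence $R\pi_\ast(L_{m+1}-1)=\H+\H^\ast+(m-2)$, with $\H$ the Hodge line bundle, which is $h$-fixed and pulls back along forgetful maps. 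Then, by the projection and Künneth formulas over $\M_{1,4+2\ell_2}$, $ft$ carries $(L_{2\ell_2+5}-1)(L_{2\ell_2+6}-1)$ to $(R\pi_\ast(L_{m+1}-1))^{\otimes2}$ with $h$ exchanging the two tensor factors, where now $m=4+2\ell_2$; its supertrace on any stratum of $I^h\M_{1,4+2\ell_2}$ equals the Adams operation $\Psi^2(\H+\H^\ast+m-2)=\Psi^2(\H^\ast+\H-2)+m$, i.e. the supertrace of $((\H^\ast+\H-2)^2+4+2\ell_2)$ with $h$ permuting the two factors of $(\H^\ast+\H-2)^2$. The prefactor $\prod_{i=1}^4\frac1{1-p_iL_i^2}\prod\frac1{1-q_iL_{2i+3}L_{2i+4}}$ rides along by the projection formula, up to the boundary corrections to its cotangent lines, which are of the form treated by Corollary 2.1; this gives the asserted $h$-equivalence.

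The step I expect to be the main obstacle is making the ``up to contraction and boundary corrections'' precise. One has to identify the strata of $I^h\M_{1,4+2\ell_2+2}$ — the balanced curves are genus-$1$ curves with an order-$2$ automorphism having its four fixed points at the four $1$-cycle marked points, with $\{2\ell_2+5,2\ell_2+6\}$ sitting either at a free $h$-orbit of points or on an $\iota$-invariant rational bubble at one of those four points, possibly with further rational components — and then show that the various correction divisors (collision of the two erased points, their collisions with the four $1$-cycle points, their approach to nodes) together with the normal-bundle factors supplied by Propositions 3.1 and 4.2 contribute exactly the constant $4+2\ell_2$, with no leftover term of the form $\H^{\pm1}$ or a mixed term that would corrupt $(\H^\ast+\H-2)^2$. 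Hand in hand with this is confirming the $\mathbb{Z}/2$-structure, namely that $h$ genuinely swaps two identical copies of $\H^\ast+\H-2$ — which uses that $h$ acts trivially on the pulled-back Hodge bundle of $\M_{1,4+2\ell_2}$. The single-point dilaton identity and the ``swap $=\Psi^2$'' principle make the shape of the answer immediate; extracting the constant and the equivariant structure from the boundary contributions is where the work lies.
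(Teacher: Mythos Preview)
Your high-level strategy is a genuine alternative to the paper's: you split $L_aL_b-1=(L_a-1)(L_b-1)+(L_a-1)+(L_b-1)$, kill the linear piece by the swap, and identify the quadratic push-forward via K\"unneth and $\Psi^2$, whereas the paper computes the push-forwards of the prefactor alone and of the prefactor times $L_aL_b$ separately (parts (1) and (2)), and only subtracts at the end.  Your observation that $str_h$ annihilates the swapped sum $(L_a-1)\oplus(L_b-1)$ is correct and is a real shortcut.  Your single-point formula $R\pi_\ast(L_{m+1}-1)=\H+\H^\ast+m-2$ is right, and $\Psi^2$ of it does match $str_h\big((\H^\ast+\H-2)^2+4+2\ell_2\big)$, so the shape of the answer falls out.

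The gap is exactly the one you flag, but you misdiagnose what has to be shown.  The constant $4+2\ell_2$ is already fully accounted for by your $\Psi^2$ computation; the boundary corrections must contribute \emph{zero}, not that constant.  The real content is that the prefactor involves $L_i$ on $\M_{1,m+2}$, not $ft^\ast L'_i$, so the projection formula does not apply directly, and your model $\mathcal{C}\times_{\M}\mathcal{C}$ is not literally $\M_{1,m+2}$.  The paper handles this by (i) factoring $ft$ through the two one-point forgetful maps $ft_1,ft_2$ and using base change, and (ii) the key geometric input you are missing: for $j\in\{1,\dots,4\}$ the divisors $D_j^1,D_j^2$ (where $j$ bubbles off with just one of the two erased points) do not meet the $h$-fixed locus, and for $i$ in a $2$-cycle the divisor $D_i^{12}$ (where $i$ bubbles off with both) does not meet it either.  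These $h$-equivalences are what decouple the correction $\mathcal{O}(D_i^1+D_i^2+D_i^{12})$ into a product $ft_1^\ast\mathcal{O}(\bar D_i^2)\cdot ft_2^\ast\mathcal{O}(\bar D_i^1)$ (and similarly with the exponent $2$ for the four $j$'s), after which each factor is pushed forward by a single-point Lee computation.  Without this step you cannot justify that the prefactor ``rides along''; and Corollary~2.1, which concerns the alternating-stratum description of $1-\prod\bar L_i/L_i$ in the ancestor--descendant comparison, is not the relevant tool here.
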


\begin{proof}
For clearness, we use $L_i$ for universal cotangent bundles on $\M_{1,4+2(\ell_2+1)}$, $\tilde{L}_i$ for that on $\M_{1,4+2\ell_2+1}$ and $L'_i$ for that on $\M_{1,4+2\ell_2}$.
We prove
\begin{enumerate}
\item
An $h$-push-forward of
\[\left(\dprod_{i=1}^4 \frac1{1-p_iL^2_i}\prod_{i=1}^{\ell_2} \frac1{1-q_iL_{2i+3}L_{2i+4}}\right) \]
along $ft$ is
\[\left((1-\H^\ast)^2 + \dsum_{i=1}^4 \frac{p_i}{1-p_i}+\sum_{i=1}^{\ell_2} \frac{2q_i}{1-q_i} \right)\dprod_{i=1}^4 \frac1{1-p_i(L'_i)^2}\prod_{i=1}^{\ell_2} \frac1{1-q_iL'_{2i+3}L'_{2i+4}},\]
with $h$ permuting $L_i, L'_i$'s and cyclically permuting terms in $(1-\H^\ast)^2$.
\item
An $h$-push-forward of
\[\left(\bigg(\dprod_{i=1}^4 \frac1{1-p_iL^2_i}\prod_{i=1}^{\ell_2} \frac1{1-q_iL_{2i+3}L_{2i+4}}\bigg) \cdot L_{2\ell_2+5}L_{2\ell_2+6}\right) \]
along $ft$ is
\[ \left((\H-1)^2 + \dsum_{i=1}^4 \frac1{1-p_i}+\sum_{i=1}^{\ell_2} \frac2{1-q_i} \right)\dprod_{i=1}^4 \frac1{1-p_i(L'_i)^2}\prod_{i=1}^{\ell_2} \frac1{1-q_iL'_{2i+3}L'_{2i+4}},\]
with $h$ permuting $L_i, L'_i$'s and cyclically permuting terms in $(\H-1)^2$.
\end{enumerate}

For $i \in \{1,\cdots, 4+ 2\ell_2\}$, let $D^1_i$ (resp. $D^2_i$ and $D^{12}_i$) be the divisor in $\M_{1,4+2(\ell_2+1)}$ consisting of curves with two components, one of which is of genus $0$ and exactly contains the marked points labeled by $i, 4+2\ell_2+1$ (resp. $i, 4+2\ell_2+2$ and $i, 4+2\ell_2+1, 4+2\ell_2+2$).
The map defined by point-wise tangent map $T_ift: T_iC \to T_i(ft(C))$ at $i^{th}$ marked point vanish exactly on $D_i^1 + D_i^2 + D_i^{12}$, so the cotangent map $T^\ast_i ft: T^\ast_i(ft(C)) \to T^\ast_iC$ also vanish exactly on $D_i^1 + D_i^2 + D^{12}_i$.
Thus, the universal cotangent bundle satisfy
\[L_i = ft^\ast (L'_i) \cdot \mathcal{O}(D_i^1 + D_i^2 + D^{12}_i).\]

Use $ft_k: \M_{1,4+2(\ell_2+1)} \to \M_{1,4+2\ell_2+1}$ for the map that forgets the $(4+2\ell_2+k)$-th marked point, and $\bar{ft}_k$ for their counterparts $\M_{1,4+2\ell_2+1} \to \M_{1,4+2\ell_2}$.
Use $\bar{D}_i^1$ (resp. $\bar{D}_i^2$) for the divisor on $\M_{1,4+2(\ell_2)+1} = ft_2 (\M_{1,4+2(\ell_2+1)})$ (resp. $\M_{1,4+2(\ell_2)+1} = ft_1 (\M_{1,4+2(\ell_2+1)})$) given by curves with two components, one of which is of genus $0$ and exactly contains the marked points labeled by $i, 4+2\ell_2+1$ (resp. $i, 4+2\ell_2+2$).
From now on, in this proof, we use $j$ for a number in $\{1,2,3,4\}$ and $i$ for a number in $\{1,2,\cdots,\ell_2\}$.

\medskip

For the first part, we have for $d_i \geq 1$ that
\[\begin{array}{ll}
\dprod_j L_j^{-2d_j} \dprod_i L_i^{-d_i} =& ft^\ast (\dprod_j (L'_j)^{-2d_j} \dprod_i (L'_i)^{-d_i}) \\
&\cdot \mathcal{O}(-\dsum_i d_i(D_i^1+D_i^2+D_i^{12}) -2\dsum_j d_j(D_j^1+D_j^2+D_j^{12})).
\end{array}\]

The divisors $D_i^{12}$ does not meet the $h$-fixed locus, so
\[\mathcal{O}(D_i^1+D_i^2+D_i^{12}) \stackrel{h}{=} \mathcal{O}(D_i^1+D_i^2+2D_i^{12}).\]
On the other hand, the section $T_i^\ast ft_2: ft^\ast L'_i \to \tilde{L}_i$ vanishes exactly on $\bar{D}_i^2$, and its pull-back along $ft_1$ vanishes exactly on $D_i^2+D_i^{12}$, so
\[\mathcal{O}(D_i^1+D_i^{12}) = ft_1^\ast \mathcal{O}(\bar{D}_i^2).\]
Likewise, we have
\[\mathcal{O}(D_i^2+D_i^{12}) = ft_2^\ast \mathcal{O}(\bar{D}_i^1).\]
Thus
\[\mathcal{O}(D_i^1+D_i^2+D_i^{12}) \stackrel{h}{=} ft_1^\ast \mathcal{O}(\bar{D}_i^2) ft_2^\ast \mathcal{O}(\bar{D}_i^1).\]

It follows similarly from the fact $D_j^1,D_j^2$ does not meet the $h$-fixed locus that
\[\mathcal{O}(2(D_j^1+D_j^2+D_j^{12})) \stackrel{h}{=} ft_1^\ast \mathcal{O}(\bar{D}_j^2) ft_2^\ast \mathcal{O}(\bar{D}_j^1).\]

As a result, 
\[\begin{array}{ll}
&\mathcal{O}(-\dsum_i d_i(D_i^1+D_i^2+D_i^{12}) -2\dsum_j d_j(D_j^1+D_j^2+D_j^{12})) \\
\stackrel{h}{=}&ft_2^\ast \mathcal{O}(-\dsum_i d_i \bar{D}_i^1 - \dsum_j d_j \bar{D}_j^1) \cdot ft_1^\ast \mathcal{O}(-\dsum_i d_i \bar{D}_i^2 - \dsum_j d_j \bar{D}_j^2).
\end{array}\]

Observe that $(ft_1)_\ast ft_2^\ast V = \bar{ft}_2^\ast (\bar{ft}_1)_\ast V$ for all bundles $V$ over $\M_{1,4+2\ell_2+1}$.
In particular,
\[(ft_1)_\ast ft_2^\ast \mathcal{O}(-\sum_i d_i \bar{D}_i^1 - \sum_j d_j \bar{D}_j^1) = \bar{ft}_2^\ast (\bar{ft}_1)_\ast \mathcal{O}(-\sum_i d_i \bar{D}_i^1- \sum_j d_j \bar{D}_j^1).\]

Together with the projection formula, we simplify the push-forward
\[(ft_2)_\ast (ft_1)_\ast \left[ft_2^\ast \mathcal{O}(-\sum_i d_i \bar{D}_i^1 - \sum_j d_j \bar{D}_j^1) \cdot ft_1^\ast \mathcal{O}(-\sum_i d_i \bar{D}_i^2 - \sum_j d_j \bar{D}_j^2)\right]\]
as
\[(\bar{ft}_2)_\ast \mathcal{O}(-\sum_i d_i \bar{D}_i^2- \sum_j d_j \bar{D}_j^2) \cdot (\bar{ft}_1)_\ast \mathcal{O}(-\sum_i d_i \bar{D}_i^1- \sum_j d_j \bar{D}_j^1).\]
On this virtual bundle $h$ acts as interchanging $\bar{D}_i^1, \bar{D}_{h(i)}^2$ and interchanging $\bar{D}_j^1, \bar{D}_j^2$.
By \cite{Lee01} Section 4.4, we know that the push-forward
\[(\bar{ft}_k)_\ast \mathcal{O} (-\sum_i d_i \bar{D}_i^k - \sum_j d_j \bar{D}_j^k) = 1-\H^\ast - \sum_i\sum_{n_i=0}^{d_i-1} (L'_i)^{-n_i}- \sum_j\sum_{n_j=0}^{d_j-1} (L'_j)^{-n_j},\]
thus the invariants 
\[ft_\ast \left(\prod_j L_j^{-2d_j} \prod_i L_i^{-d_i}\right) = \left(\prod_j (L'_j)^{-2d_j} \prod_i (L'_i)^{-d_i}\right) \cdot (1-\H^\ast - \sum_i\sum_{n_i=0}^{d_i-1} (L'_i)^{-d_i} - \sum_j\sum_{n_j=0}^{d_j-1} (L'_j)^{-n_j})^2.\]
Here $h$ takes $L'_j, L'_i$ in $\prod_j (L'_j)^{-2d_j} \prod_i (L'_i)^{-d_i}$ to $L'_j, L'_{h(i)}$, and interchanges $L'_j, L'_i$ in the first parenthesis with $L'_j, L'_{h(i)}$ in the second parenthesis in $(1-\H^\ast - \sum_i\sum_{n_i=0}^{d_i-1} (L'_i)^{-d_i} - \sum_j\sum_{n_j=0}^{d_j-1} (L'_j)^{-n_j})^2$.
Put this into a generating function and take the $h$-invariant part, we obtain the first part.
Here we remark that if $h$ interchanges the $(2i+3)$-rd and $(2i+4)$-th marked points, then
\[str_h \frac1{1-q_i L_{2i+3}}\frac1{1-q_i L_{2i+4}} = \frac1{1-q_i L_{2i+3}L_{2i+4}}.\]
(and is not $\frac1{1+q^2_i L_{2i+3}L_{2i+4}}$).

To prove the second point, we need to ($h$-)push-forward
\[ft^\ast (\prod_j (L'_j)^{2d_j}\prod_i (L'_i)^{d_i}) \cdot \left(L_{4+2\ell_2+1}L_{4+2\ell_2+2} \right)(\sum_i d_i(D_i^1+D_i^2+D_i^{12}) + 2\sum_j d_j(D_j^1+D_j^2+D_j^{12}))\]
along $ft$.
Observe that the universal cotangent bundle $L_{4+2\ell_2+1}$ and the pull-back of its counterpart $\tilde{L}_{4+2\ell_2+1}$ along $ft_2$ coincide outside the divisor formed by curves that contains a $g=0$ nodal component with precisely the last two marked points, and that no point in this divisor is fixed by $h$.
Thus $L_{4+2\ell_2+1} \stackrel{h}{=} ft_2^\ast \tilde{L}_{4+2\ell_2+1}$.
Identical to part $1$, this implies that 
\[\begin{array}{ll}
&L_{4+2\ell_2+1}L_{4+2\ell_2+2} \cdot \mathcal{O}(\dsum_i d_i(D_i^1+D_i^2+D_i^{12}) + 2\dsum_j d_j(D_j^1+D_j^2+D_j^{12})) \\
\stackrel{h}{=} & (ft_2^\ast\tilde{L}_{4+2\ell_2+1})\mathcal{O}(\dsum_i d_i\bar{D}_i^1+ \dsum_j d_j\bar{D}_j^1) \cdot (ft_1^\ast L_{4+2\ell_2+2})\mathcal{O}(\dsum_i d_i\bar{D}_i^2+ \dsum_j d_j\bar{D}_j^2),
\end{array}\]
which pushes forward along $ft$ to
\[(ft_2)_\ast \tilde{L}_{4+2\ell_2+2}(\sum_i d_i \bar{D}_i^2+ \sum_j d_j\bar{D}_j^2) \cdot (ft_1)_\ast \tilde{L}_{4+2\ell_2+1}(\sum_i d_i \bar{D}_i^1 + \sum_j d_j \bar{D}_j^1).\]
It follows from the fact
\[\tilde{L}_{4+2\ell_2+k} = \omega(\sum_i \bar{D}_i^k+\sum_j \bar{D}_j^k)\]
and Section 4.5 in \cite{Lee01} that the desired $h$-push-forward is
\[(\prod_j (L'_j)^{2d_j}\prod_i (L'_i)^{d_i}) \cdot (\H-1+\sum_i\sum_{n_i=0}^{d_i} (L'_i)^{-n_i}+\sum_j\sum_{n_j=0}^{d_j} (L'_j)^{-n_j})^2.\]
Here, the $h$ action is identical to that in the first part, and the second part follows likewise.
\end{proof}

By pulling back the lemma along forgetful maps $\M_{1,n+\bar{n}} \to \M_{1,n}$ and multiplying $\tau$-contributions (that does not contain universal cotangent bundles) from the last $\bar{n}$ marked points, we have

\begin{cor}
\[\dprod_{i=1}^4 \frac1{1-p_i\bar{L}_i}\prod_{i=1}^{\ell_2} \frac1{1-q_i\bar{L}_{2i+3}\bar{L}_{2i+4}} \cdot (\bar{L}_{2\ell_2+5}\bar{L}_{2\ell_2+6} -1) \cdot \prod_{i=1}^{\bar{n}} \tau_{r_i}\]
$h$-push-forwards along $ft$ that forgets the last two (out of $n$) points to
\[\left((\H^\ast+\H-2)^2 + 4+ 2\ell_2 \right)^2\dprod_{i=1}^4 \frac1{1-p_i\bar{L}_i}\prod_{i=1}^{\ell_2} \frac1{1-q_i\bar{L}_{2i+3}\bar{L}_{2i+4}} \cdot \prod_{i=1}^{\bar{n}} \tau_{r_i}.\]
\end{cor}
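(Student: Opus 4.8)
The plan is to obtain this corollary directly from the preceding lemma (Lemma 4.2), by pulling that bundle identity back from the moduli of curves to the moduli of stable maps carrying $\tau$-insertions and then reinstating the $\tau$-contributions via the projection formula. Fix the number $\bar n$ of $\tau$-marked points and the degree $d$, and set $n=6+2\ell_2$, so that $4_1+(\ell_2+1)_2$ records $n$ distinguished marked points and forgetting the last two-cycle lands in $\M_{1,(n-2)+\bar n}(X,d)$. Let $\pi:\M_{1,n+\bar n}(X,d)\to\M_{1,n}$ be the map that forgets the $\tau$-points and the map to $X$ and stabilizes, and $\pi'$ its analogue on $\M_{1,(n-2)+\bar n}(X,d)\to\M_{1,n-2}$. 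By construction $\bar L_i=\pi^\ast L_i$, and the Hodge bundle $\H$ (hence $\H^\ast$) on $\M_{1,n+\bar n}(X,d)$ is $\pi^\ast$ of the Hodge bundle on $\M_{1,n}$; the same holds for $\pi'$. Consequently, after dropping the factor $\prod_{i=1}^{\bar n}\tau_{r_i}$, the class in the first display of the corollary is exactly $\pi^\ast$ of the bundle whose $h$-push-forward is computed in Lemma 4.2, and the claimed target is ${\pi'}^\ast$ of its output.

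The heart of the argument is then to commute $\pi^\ast$ with the $h$-push-forward along the forgetful map $ft$ that forgets the last two distinguished points. This $ft$ sits over the map $ft:\M_{1,n}\to\M_{1,n-2}$ of Lemma 4.2 through $\pi$ and $\pi'$, giving a commuting square of forgetful/contraction maps. Using the $h$-push-forward formalism, namely Proposition 4.2 (the Kawasaki--Riemann--Roch formula for $str_h H^\ast$) together with Corollary 4.1, I would verify that pull-back along $\pi$ preserves $h$-equivalence and that $ft_\ast\,\pi^\ast = {\pi'}^\ast\,ft_\ast$ holds up to $h$-equivalence. The point is that the loci on which this square fails to be Cartesian arise from curves whose distinguished part is contracted by the stabilization in $\pi$ or $\pi'$; these loci are disjoint from the $h$-fixed loci that carry the cotangent and Hodge classes in question, hence invisible to the super-traces, exactly as in the $\bar L$-versus-$L$ comparison of Section 2.2. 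Granting this, the $h$-push-forward of Lemma 4.2 pulls back to the statement that $\prod_{i=1}^4\frac1{1-p_i\bar L_i}\prod_{i=1}^{\ell_2}\frac1{1-q_i\bar L_{2i+3}\bar L_{2i+4}}\cdot(\bar L_{2\ell_2+5}\bar L_{2\ell_2+6}-1)$ on $\M_{1,n+\bar n}(X,d)$ $h$-push-forwards along $ft$ to $\big((\H^\ast+\H-2)^2+4+2\ell_2\big)$ times the corresponding class (with the factor $\bar L_{2\ell_2+5}\bar L_{2\ell_2+6}-1$ removed) on $\M_{1,(n-2)+\bar n}(X,d)$.

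It remains to reinstate $\prod_{i=1}^{\bar n}\tau_{r_i}$. Since $ft$ forgets only distinguished marked points, every evaluation map at a $\tau$-point factors through $ft$, so this $\tau$-insertion --- together with the $h$-action cyclically permuting the $\tau$-points within each cycle --- is the $ft^\ast$ of the corresponding insertion on $\M_{1,(n-2)+\bar n}(X,d)$. By the projection formula it therefore passes through the $h$-push-forward along $ft$ unchanged, and tensoring it with the identity of the previous paragraph yields precisely the displayed statement of the corollary. (Summing the resulting sheaf-level identity over $\bar n$ and $d$ against the weights $Q^d/\bar{\ell}!$ and $\prod_r r^{\ell_r}$ and taking super-traces recovers part $1$ of Proposition 4.1; the corollary is the unsummed form used in its proof.)

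I expect the main obstacle to be the middle step, namely the compatibility $ft_\ast\,\pi^\ast = {\pi'}^\ast\,ft_\ast$ up to $h$-equivalence: the relevant square of forgetful/contraction maps is not literally Cartesian, so naive flat base change is unavailable and one must argue through Proposition 4.2 and the $h$-equivalence calculus of Corollary 4.1, controlling the stabilization discrepancies by showing that they do not meet the $h$-fixed loci supporting the cotangent and Hodge classes. The remaining inputs --- $\bar L_i=\pi^\ast L_i$, the pull-back of the Hodge bundle, and the projection formula for the $\tau$-insertions --- are routine.
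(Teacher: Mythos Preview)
Your approach is the paper's: its entire proof is the one sentence preceding the corollary --- ``By pulling back the lemma along forgetful maps $\M_{1,n+\bar{n}} \to \M_{1,n}$ and multiplying $\tau$-contributions (that does not contain universal cotangent bundles) from the last $\bar{n}$ marked points'' --- and you are spelling this out.

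One simplification you overlook: Section~4 treats the \emph{point} target space, so there is no $X$, no degree $d$, and no evaluation maps; $\M_{g,n}(X,d)=\M_{g,n}$ and each $\tau_{r_i}$ is just a scalar in $\Lambda$. That is precisely what the paper's parenthetical ``that does not contain universal cotangent bundles'' means, and it collapses your projection-formula step for the $\tau$-insertions to a triviality.

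Your identification of the base-change compatibility $ft_\ast\pi^\ast \stackrel{h}{=} \pi'^\ast ft_\ast$ as the one nontrivial point is accurate --- the paper does not address it either. But be careful with your proposed justification: the claim that the non-Cartesian locus is disjoint from the $h$-fixed locus is not literally true (a rational tail carrying the two forgotten distinguished points together with a single $\tau_1$-point can be $h$-invariant at a special cross-ratio). The cleanest route, and presumably what the paper intends by ``pulling back the lemma,'' is to note that every ingredient in the proof of Lemma~4.2 --- the comparison $L_i=ft^\ast L'_i\cdot\mathcal{O}(D_i^1+D_i^2+D_i^{12})$, the $h$-equivalences of divisor classes, and the explicit push-forwards from \cite{Lee01} --- either pulls back verbatim along $\pi$ (since $\bar L_i=\pi^\ast L_i$ and the square of forgetful maps commutes) or reruns on $\M_{1,n+\bar n}$ with the extra $\bar n$ marked points carried along passively.
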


When $M=3,4,6$, let $h$ cyclically permutes the last $M$ marked points in $\M^M_{\ell_M+1}$, and $ft:\M^M_{\ell_M+1} \to \M^M_{\ell_M}$ forgets these points. 
Let $\mathcal{H}$ be the Hodge bundle over $\M^M_{\ell_M}$.
The proof of the following lemma is identical to that of Corollary 4.2.

\begin{lemma}
An $h$-push-forward of 
\[\left(\prod_{i=1}^{\ell_M} \frac1{1-q_i\bar{L}_{Mi+M+1} \cdots \bar{L}_{Mi+2M}} \cdot (\bar{L}_{M\ell_M+M+1} \cdots \bar{L}_{M\ell_M+2M}-1) \cdot \prod_{i=1}^{\bar{n}} \tau_{r_i}\right)\]
along $ft$ is
\[\left((\H^\ast+\H-2)^M + M + M\ell_M \right) \prod_{i=1}^{\ell_M} \frac1{1-q_i\bar{L}_{Mi+M+1} \cdots \bar{L}_{Mi+2M}} \cdot \prod_{i=1}^{\bar{n}} \tau_{r_i}.\]
with $h$ permuting $L_i$'s and cyclically permuting terms in $(\H^\ast+\H-2)^M$.
\end{lemma}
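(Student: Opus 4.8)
The plan is to carry over, essentially line for line, the proof of Lemma~4.2, with the single $h$-orbit of $M$ forgotten marked points in $\M^M_{\ell_M+1}$ playing the role of the pair of forgotten points there. First I would remove the $\tau$-insertions exactly as in the passage from Lemma~4.2 to the corollary following it: the $\bar n$ marked points carrying $\tau_{r_i}$ carry no universal cotangent classes, so it suffices to prove the push-forward identity for $ft\colon\M^M_{\ell_M+1}\to\M^M_{\ell_M}$ on its own and then pull it back along $\M_{1,n+\bar n}\to\M_{1,n}$ and multiply through by $\prod_i\tau_{r_i}$. This reduces the claim to computing, separately, the $h$-equivariant push-forward along $ft$ of $\prod_{i=1}^{\ell_M}(1-q_i\bar L_{Mi+M+1}\cdots\bar L_{Mi+2M})^{-1}$ and of that same product multiplied by $\bar L_{M\ell_M+M+1}\cdots\bar L_{M\ell_M+2M}$; the desired answer is then their difference.

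Next I would factor $ft=ft_M\circ\cdots\circ ft_1$ into $M$ one-point forgetful maps and push the bundle comparison of Lemma~4.2 through this composition. For each surviving marked point the ancestor cotangent bundle is $ft^\ast$ of its counterpart on the target twisted by the boundary divisors on which that point's component is met by some nonempty subset of the $M$ forgotten points. The essential use of the $h$-equivalence relation $\stackrel{h}{=}$ is that every divisor along which two or more of the $M$ cyclically permuted forgotten points bubble off onto the same component is disjoint from the $h$-fixed locus, so modulo $\stackrel{h}{=}$ the twisting divisor can be replaced by a product of pull-backs of divisors from the intermediate moduli spaces; one then commutes the push-forwards past the pull-backs by base-change identities of the form $(ft_k)_\ast ft_{k'}^\ast=\bar{ft}_{k'}^\ast(\bar{ft}_k)_\ast$ together with the projection formula, exactly as in the $M=2$ case.

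Finally I would apply Lee's genus-one computations one factor at a time: Section~4.4 of \cite{Lee01} for the non-dilaton part, giving $(\bar{ft}_k)_\ast\mathcal{O}(-\sum_i d_i\bar D_i^k)=1-\H^\ast-\sum_i\sum_{n=0}^{d_i-1}(L'_i)^{-n}$, and Section~4.5 for the part carrying the forgotten cotangent classes, using $\tilde L_k=\omega(\sum_i\bar D_i^k)$ and the resulting push-forward $\H-1+\sum_i\sum_{n=0}^{d_i}(L'_i)^{-n}$. Since the $M$ forgotten points form one $h$-orbit that $h$ permutes cyclically, the $M$ one-point outputs assemble into an $M$-fold product carrying a cyclic $h$-action; on taking the $h$-invariant part, the Hodge contributions combine, up to $\stackrel{h}{=}$, into $(\H^\ast+\H-2)^M$ just as $(\H-1)^2-(1-\H^\ast)^2$ did for $M=2$, while the residual $1/(1-q_i)$-type contributions telescope via $\frac{M}{1-q_i}-\frac{Mq_i}{1-q_i}=M$ to the scalar $M+M\ell_M$ ($M$ from the weighted count of the distinguished marked points of $\M^M_{\ell_M}$, $M\ell_M$ from the $\ell_M$ surviving $M$-orbits). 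Packaging the computation as a generating function in the $q_i$'s, extracting the $h$-invariant part, and using the Adams-type identity $str_h\bigl(\prod_{j}(1-q_i\bar L_{j})^{-1}\bigr)=(1-q_i\bar L_{Mi+M+1}\cdots\bar L_{Mi+2M})^{-1}$ over each cyclically permuted orbit yields the stated formula.

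I expect the main obstacle to be the combinatorial bookkeeping of the boundary divisors through the $M$-fold composition: for $M=2$ only $D_i^1,D_i^2,D_i^{12}$ appear, but for general $M$ one must organize the whole lattice of divisors $D_i^S$ indexed by subsets $S\subseteq\{1,\dots,M\}$ of forgotten points, check that all those with $|S|\ge2$ are $h$-equivalently negligible, and verify that the base-change and projection-formula manipulations survive all $M$ stages; keeping the residual $h$-action on the Hodge bundle under control so that the final contribution is the symmetric expression $(\H^\ast+\H-2)^M$ rather than a less tractable one is the delicate point.
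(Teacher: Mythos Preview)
Your proposal is correct and matches the paper's approach exactly: the paper's entire proof of this lemma is the single sentence ``The proof of the following lemma is identical to that of Lemma~4.2,'' and what you have written is precisely a blueprint for carrying out that identical argument with $M$ cyclically permuted forgotten points in place of two. Your identification of the divisor bookkeeping through the $M$-fold composition as the main obstacle is apt, and you should also be mindful that for $M=4,6$ the ramification structure of $\M^M_{\ell_M}$ involves marked points sitting in cycles of length $>1$, so tracking how these contribute to the scalar $M$ (as opposed to the $4$ in Lemma~4.2, which came from four $h$-fixed points carrying $\frac{1}{1-p_jL^2}$) requires a moment of care.
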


\begin{lemma}
Substituting $\mathcal{H}$ by $1$ preserves the correlators.
\end{lemma}

\begin{proof}
Note that on a strata labeled by $\tilde{h} = (h,i)$, where $i$ is a symmetry of $\M_{1,n}^h$ and $h$ is of order $M \geq 2$, $\tilde{h}$ acts on $(\mathcal{H}^\ast + \mathcal{H} - 2)^M$ as permuting components by $h$ and as $i$ (trivially) on each component, thus $\tilde{h}$ acts as $h$ on $str_h(\mathcal{H}^\ast + \mathcal{H} - 2)^M = (\mathcal{H}^\ast)^M + \mathcal{H}^M - 2$.

The Hodge bundle $\mathcal{H}$ satisfies $\mathcal{H}^\ast = \mathcal{H}^{-1}$ and thus $ch \ str_h(\mathcal{H}^\ast + \mathcal{H} - 2)^M$ is a multiple of $ch (\mathcal{H}-1)^2/\mathcal{H}$.
So $ch \ str_h(\mathcal{H}^\ast + \mathcal{H} - 2)^M$ has degree at least $4$.
For dimension reasons, terms with $\H^\ast + \H - 2$ vanish.
\end{proof}

Now, taking the weighted sum of Corollary 4.2, Lemma 4.3, and Lemma 4.4 into account, we have Proposition 4.1.

\begin{lemma}
We have the following super-traces.
\begin{enumerate}
\item
\[str_h H^\ast\left(\M_{1,4+2\ell_2}^{-1}, (-L_1-1) \cdot \prod_{i=1}^{\ell_2}(L_{2i+3}L_{2i+4}-1)\right) = \frac14 \cdot 2^{\ell_2} (\ell_2+1)!\]
\item
\[str_h H^\ast\left(\M_{1,4+2\ell_2}^{-1}, (L_5L_6-1)^2 \cdot \prod_{i=2}^{\ell_2}(L_{2i+3}L_{2i+4}-1)\right) = 2^{\ell_2-2} (\ell_2+1)!\]
\end{enumerate}
\end{lemma}

\begin{proof}
For the first point, for dimension reasons 
\[str_h H^\ast\left(\M_{1,4+2\ell_2}^{-1}, \frac{(-L_1-1)^2}2 \cdot \prod_{i=1}^{\ell_2}(L_{2i+3}L_{2i+4}-1)\right) = 0,\]
and adding this to the left hand side changes the contribution of the first marked point to the sheaf to $(L_1^2-1)/2$, making our version of dilaton equation Proposition 4.1 applicable.
Our Dilaton equation eliminates all but the first four marked points, and simplifies the supertrace to 
\[2^{\ell_2} (\ell_2+1)! \cdot str_h H^\ast\left(\M_{1,4}^{-1}, \frac{(L_1-1)^2}2\right) = 2^{\ell_2} (\ell_2+1)! \cdot str_h H^\ast\left(\M_0^2, (-L_1-1)\right).\]
The forgetful map $\M^2_0 \to \M_{1,1}$ that forgets all but the first marked point is a $3!=6$-fold covering.
Thus the correlators
\[str_h H^\ast(\M_0^2,L_1-1) = \int_{\M_0^2} ch(L_1-1) = 6\int_{\M_{1,1}} c_1(L_1)=\frac14.\]

For the second point, the Dilaton equation eliminates all but the first six marked points, and contributes the coefficient $2^{\ell_2-2} (\ell_2+1)!$.
The remaining supertrace is
\[str_h H^\ast\left(\M_{1,6}^{-1}, (L_5L_6-1)^2\right) = \frac12 str_h H^\ast\left(\M_1^2, (L_5L_6-1)^2\right),\]
after noticing that $\M_1^2$ is the fixed locus of the $\mathbb{Z}_2$ symmetry of $\M_{1,6}^{-1}$, with co-dimension $1$.
Next, the forgetful map $\M_1^2 \to \M_{1,2}$ that forgets the first $4$ level $1$ marked points is a $24$-fold covering map, where $24$ comes from $4!=24$ ways to label the four level $1$ marked points.
Moreover, the identification given by forgetting the first four marked points preserves the universal cotangent bundles at the other $2$ marked points and the $\mathbb{Z}_2$ action on them.
We have
\[\begin{array}{ll}
&str_{h_2} H^\ast(\M^2_1,(L_5L_6-1)^2)\\
=&\dint_{[\M_1^2]} ch \ str_{h_2} \dfrac{(L_5L_6-1)^2}{\wedge^\ast N^\ast_{\M_0^2/\M_1^2}} \cdot td (\M_1^2)\\
=&\dint_{[\M_1^2]} ch \dfrac{(L_5L_6-1)^2}2\\
=&12 \dint_{[\M_{1,2}]} ch (L_1L_2-1)^2.
\end{array}\]
Since
\[L_1L_2-1=(L_1-1)(L_2-1)+(L_1-1)+(L_2-1),\]
by degree argument and the cohomological String and Dilaton equations, we have
\[12 \dint_{[\M_{1,2}]} ch (L_1L_2-1)^2 = 12 \dint_{[\M_{1,2}]} (c_1(L_1)+c_1(L_2))^2=2.\]
\end{proof}

Applying the (virtual) Kawasaki's Riemann Roch theorem \cite{Tonita14} yields
\begin{cor}
\[\<\!\<-\bar{L}-1,1,1,1;\bar{L}-1, \cdots \>\!\>_{1,4_1+\ell_2} = \<\!\<1,1,1,1;(\bar{L}-1)^2, \bar{L}-1, \cdots \>\!\>_{1,4_1+\ell_2}\]
\end{cor}

\subsection{Proof of Theorem 3}
\

\begin{lemma}
Let $\bar{t}_{r,1}$ be the leading coefficients of $\bar{\t}_r(q) = \bar{t}_{r,1}(q-1) + O((q-1)^2)$.
Then
\[\frac1{1-\bar{t}_{r,1}} = \frac{\partial \tau_r}{\partial t_{r,0}}.\]
\end{lemma}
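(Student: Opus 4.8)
The plan is to reduce to level $r=1$ and then compare two descriptions of $\bar{\t}_1$. For the reduction, recall from Section 3.3 that the fixed-point recursion expresses $\tau_r$ (hence $\bar{\t}_r$) through $\t_r$ and $\tau_r,\tau_{2r},\dots$ alone; thus the level-$r$ statement is obtained by applying $R_r$ to the level-$1$ statement, which sends $\t_1\mapsto\t_r$, $\tau_1\mapsto\tau_r$, $t_{1,0}\mapsto t_{r,0}$ and $G_1\mapsto G_r$. In particular $\tau_s$ for $s\ge 2$ is independent of $t_{1,0}$, so $\partial/\partial t_{1,0}$ acts on $\S_1$ only through $\tau_1$. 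Write $\x_1=\t_1+1-q$ and $\bar{\x}_1=\bar{\t}_1+1-q$, so that $\bar{\x}_1=[\S_1(q)\x_1(q)]_+$ and, by the choice of $\tau$, $\bar{\x}_1(1)=0$ identically in $\t$. Throughout I use: the closed form
\[[\S_1(q)\x_1(q)]_+=\sum_{\alpha,\beta}\phi_\beta\, G_1^{\alpha\beta}\Big((\x_1(q),\phi_\alpha)+\<\!\<F(q,L),\phi_\alpha\>\!\>_{0,2_1}\Big),\qquad F(q,L)=\frac{q\,\x_1(q)-L\,\x_1(L)}{q-L},\]
from Section 2.2; the normalisation $[\S_1(q)\,1]_+=1$ (since $\S_1=\mathrm{id}+O(1/q)$ on constants); and the differentiation rules $\partial_{\tau_1^\gamma}G^{\alpha\beta}=-\sum_{\mu,\nu}G^{\alpha\mu}\<\!\<\phi_\mu,\phi_\nu,\phi_\gamma\>\!\>_{0,3_1}G^{\nu\beta}$ together with $\partial_{\tau_1^\gamma}\<\!\<\cdots\>\!\>_{0,2_1}=\<\!\<\cdots,\phi_\gamma\>\!\>_{0,3_1}$.

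First I would differentiate $\bar{\x}_1(1)=0$ with respect to $t_{1,0}$. Since $\partial\x_1/\partial t_{1,0}$ is the constant function $1$ and $\partial\S_1/\partial t_{1,0}=(\partial\S_1/\partial\tau_1)(\partial\tau_1/\partial t_{1,0})$, this gives
\[0=[\S_1(q)\,1]_+\big|_{q=1}+\frac{\partial\tau_1}{\partial t_{1,0}}\Big[\tfrac{\partial\S_1(q)}{\partial\tau_1}\,\x_1(q)\Big]_+\Big|_{q=1}=1+\frac{\partial\tau_1}{\partial t_{1,0}}\Big[\tfrac{\partial\S_1(q)}{\partial\tau_1}\,\x_1(q)\Big]_+\Big|_{q=1}.\]
Differentiating the closed form in $\tau_1$, the piece coming from $\partial_{\tau_1}G^{\alpha\beta}$ is (up to sign) a contraction of $\<\!\<\phi_\mu,\phi_\nu,1\>\!\>_{0,3_1}$ with the coordinates of $[\S_1(q)\x_1(q)]_+=\bar{\x}_1(q)$, hence vanishes at $q=1$ because $\bar{\x}_1(1)=0$; the surviving piece is $[\partial_{\tau_1}\S_1(q)\,\x_1(q)]_+|_{q=1}=G_1\<\!\<F(1,L),1,1\>\!\>_{0,3_1}$. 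Therefore $\partial\tau_1/\partial t_{1,0}=-\big(G_1\<\!\<F(1,L),1,1\>\!\>_{0,3_1}\big)^{-1}$.

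Separately I would expand $\bar{\t}_1(q)=\bar{\x}_1(q)+(q-1)$ at $q=1$: since $\bar{\x}_1(1)=0$, the linear coefficient is $\bar{t}_{1,1}=1+\bar{\x}_1'(1)$, and $\bar{\x}_1'(1)=\frac{d}{dq}[\S_1(q)\x_1(q)]_+\big|_{q=1}=G_1\big(\x_1'(1)+\<\!\<\partial_q F(1,L),1\>\!\>_{0,2_1}\big)$. The two expressions are matched by the string equation (Proposition 1.1) applied to the three-point double bracket, retaining the genuine $\M_{0,3}$-term (which cannot be absorbed, since removing a point there would land on the unstable $\M_{0,2}$):
\[\<\!\<F(1,L),1,1\>\!\>_{0,3_1}=\big(F(1,1),1\big)+\<\!\<F(1,L),1\>\!\>_{0,2_1}+\<\!\<D[F(1,\cdot)](L),1\>\!\>_{0,2_1}.\]
Here $F(1,1)=\x_1(1)+\x_1'(1)$ and $D[F(1,\cdot)](L)=\partial_q F(1,L)$ — both equal to the divided difference $\big(P(L)-P(1)-P'(1)(L-1)\big)/(1-L)^2$ of $P(q)=q\,\x_1(q)$ — while $\<\!\<F(1,L),1\>\!\>_{0,2_1}=-\x_1(1)$, which is exactly $\bar{\x}_1(1)=0$ rewritten. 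Substituting, $\<\!\<F(1,L),1,1\>\!\>_{0,3_1}=\x_1'(1)+\<\!\<\partial_q F(1,L),1\>\!\>_{0,2_1}$, so $G_1\<\!\<F(1,L),1,1\>\!\>_{0,3_1}=\bar{\x}_1'(1)=\bar{t}_{1,1}-1$, and hence $\partial\tau_1/\partial t_{1,0}=-(\bar{t}_{1,1}-1)^{-1}=(1-\bar{t}_{1,1})^{-1}$. Applying $R_r$ yields the statement for all $r$.

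The step I expect to be the main obstacle is the bookkeeping in the last display: the three-point double bracket must be handled with its $\M_{0,3}$-correction retained — dropping it produces a spurious $-\t_1(1)$ in place of the required $\t_1'(1)-1$ — and one must check that this correction is precisely what cancels the mismatch between $F(1,1)$ and $\x_1(1)$. The compatible signs in the $\tau_1$-derivative of $\S_1$ and the identity $D[F(1,\cdot)]=\partial_q F(1,\cdot)$ are the other delicate points; once these are settled, the remainder is routine manipulation of divided differences and of the string relation of Section~1.
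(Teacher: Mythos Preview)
Your proof is correct and follows essentially the same strategy the paper defers to (the argument of Proposition~1.1 in \cite{nonperm}): differentiate the constraint $\bar{\x}_1(1)=0$ in $t_{1,0}$ to express $\partial\tau_1/\partial t_{1,0}$ via a three-point correlator, compute $\bar{t}_{1,1}$ from $\partial_q[\S_1\x_1]_+|_{q=1}$, and match the two using the string equation together with the divided-difference identity $D[F(1,\cdot)]=\partial_qF(1,\cdot)$. Note that your $\x_1=\t_1+1-q$ (without $\tau_1$) is consistent with the paper's Section~4 convention $(1-q+\bar{\t})=[\S_\tau(1-q+\t)]_+$, where $\t$ denotes the total input; the $\partial_{\tau_1}G^{\alpha\beta}$ term indeed vanishes because it is contracted against $\bar{\x}_1(1)=0$. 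The paper's one-line proof records only the point-target simplification $G_r^{-1}\langle\!\langle 1,1-\bar t_{r,1},1\rangle\!\rangle_{0,3_r}=1-\bar t_{r,1}$, which in your computation is the step $\langle\!\langle 1,1,1\rangle\!\rangle_{0,3_1}=G_1$ implicit in the string reduction.
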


\begin{proof}
The proof is identical to that of Proposition 1.1 in \cite{nonperm} in the special case of point target space after noticing that 
\[G_r^{-1} R_r \<\!\<1,1-\bar{t}_{r,1},1\>\!\>_{0,3_r} = 1-\bar{t}_{r,1}.\]
\end{proof}

\

We now compute the term
\[\bar{\mathcal{F}}_1(\bar{\t})= \dsum_{\ell,\bar{\ell}} \frac1{\ell!\bar{\ell}!} \left<\bar{\mathbf{t}}_1(\bar{L}), \cdots, \bar{\mathbf{t}}_1(\bar{L}), \tau_1,\cdots,\tau_1;\cdots, \bar{\mathbf{t}}_r(\bar{L}), \cdots, \tau_r,\cdots \right>_{1,\ell+\bar{\ell}} \]
in Ancestor-Descendant correspondence.
By definition, each correlator $\<\cdots\>$ is a supertrace of $h$ on some sheaf cohomology.

When $ord(h)=1$ on $\M_{1,\ell}$ (i.e., with the last $\bar{\ell}$ marked points carrying inputs $\tau$'s forgotten), identical to the non-permutative case \cite{nonperm}, the contribution of the non-twisted stratum is
\[\frac1{24} \log \det\left(\frac{\partial \tau_1}{\partial t_{1,0}}\right).\]
Contributions from twisted strata are in $\ell_M = 0$, $M \geq 2$ cases.

\

When $ord(h)=2$ on $\M_{1,\ell}$, expand
\[\begin{array}{llll} 
\bar{\t}_2(q) &= \bar{t}_{2,1} (q-1) &+ \bar{t}'_{2,1} (q-1)^2 &+ O((q-1)^3); \\
\bar{\x}_1(-q)&= \bar{x}_{1,-1} &+ \bar{x}'_{1,-1} (q-1) &+ O((q-1)^2).
\end{array}\]
Note that $\bar{t}_{2,1}, \bar{t}'_{2,1}$ are $\bar{t}_{2,1}, \bar{t}_{2,2}$ in the introduction.

When $\ell_2=0$, the lift of $h$ with respect to the twisted strata and non-twisted strata acts on $\bar{L}$'s in the same way.
The dimension of $\M_{1,4+2\ell_2}$ is $\ell_2 +1$, so only invariants with one of the following $3$ types of inputs contribute.

\begin{table}[H]
\centering       
\begin{tabular}{ c | l}                       
Case & Inputs \\
\hline\hline
&\\
$a$  & All points $\bar{x}_{1,-1}, \bar{t}_{2,1}(q-1)$ \\
$b$  & In general $\bar{x}_{1,-1}, \bar{t}_{2,1}(q-1)$, one point $\bar{x}'_{1,-1}(-q-1)$ \\
$c$  & In general $\bar{x}_{1,-1}, \bar{t}_{2,1}(q-1)$, one pair $\bar{t}'_{2,1}(q-1)^2$
\end{tabular}
\end{table} 

In case $a$, the dilaton equation eliminates all marked points with input $\bar{t}_{2,1}(\bar{L}-1)$, and simplifies the correlators as
\[\<\!\<1,1,1,1\>\!\>_{1,4_1} \bar{x}_{1,-1}^4 \cdot \dfrac1{(1-\bar{t}_{2,1})^2}.\]

In cases $b$, by Corollary 4.3 and the Dilaton equation Proposition 4.1, we simplify the invariants as
\[\<\!\<-\bar{L}-1,1,1,1\>\!\>_{1,4_1} \bar{x}_{1,-1}^3 \bar{x}'_{1,-1} \cdot \dfrac1{(1-\bar{t}_{2,1})^2}.\]

In case $c$, Corollary 4.3 and Proposition 4.1 simplifies the invariants as
\[\<\!\<1,1,1,1,(\bar{L}-1)^2\>\!\>_{1,4_1+1_2} \bar{x}_{1,-1}^4 \cdot \dfrac1{(1-\bar{t}_{2,1})^3} \bar{t}'_{2,1} = \<\!\<-\bar{L}-1,1,1,1,\>\!\>_{1,4_1} \bar{x}_{1,-1}^4 \cdot \dfrac1{(1-\bar{t}_{2,1})^3} \bar{t}'_{2,1}.\]

\

When $ord(h)=3,4,6$ on $\M_{1,\ell}$, the invariants can be likewise written as the last $3$ terms in the residue, by Proposition 4.1.

\section*{Appendix. Lagrangian cone formalism}
\

In this appendix, we consider the range $\mathcal{L}$ of the big $\mathcal{J}$-function 
\[\mathcal{J} (\t_1) = 1-q+\t_1 + \sum_{\ell, d,\alpha} \frac{Q^d}{\ell !} \phi^\alpha \<\frac{\phi_\alpha}{1-qL}, \t_1,\cdots,\t_1;\cdots,\t_r\cdots\>_{0,1_1+\ell,d},\]
viewed as a function in $\t_1$, with parameters $\t_2,\cdots,\t_r \cdots$.

We aim at proving
\begin{customthm}{4}
$\mathcal{L} = \cup_\tau (1-q) \S_{1,\tau}^{-1} \mathcal{K}_+$ is an overruled Lagrangian cone.
\end{customthm}

\begin{lemma*}
$\mathcal{L}$ is an overruled variety.
\end{lemma*}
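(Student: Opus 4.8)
The strategy is to reduce the study of the descendant cone $\mathcal{L}$ to that of the genus $0$ \emph{ancestor} cone, which turns out to be a linear Lagrangian subspace, and then to read off the Lagrangian and ruling structure from there. For $\t$ let $\tau=\tau(\t)$ and $\bar\t_1=\bar\t_1(\t)$ be the base point and ancestor input produced by the recursion of Section 3.3, so that $\bar\t_1(1)=0$. Combining Theorem 1 (the Ancestor-Descendant correspondence) with Proposition 2.1 (the $\S$-operator dictionary $\<\!\<\t(L),\dots\>\!\> = \<\!\<[\S\t(\bar L)]_+,\dots\>\!\>$) and keeping track of the unstable corrections in Theorem 1 — in genus $0$ these are the terms $\frac12\<\!\<\x_1,\x_1\>\!\>_{0,2_1}$ and $\<\!\<\t_2\>\!\>_{0,1_2}$, which encode exactly the dilaton shift together with the quadratic part of the quantization of $\S_1$ — one obtains the expected relation $\S_{1,\tau}\,\mathcal{J}(\t_1) = \bar{\mathcal{J}}_\tau(\bar\t_1)$ between the descendant and ancestor $\mathcal{J}$-functions, where $\bar{\mathcal{J}}_\tau(\bar\t_1) = 1-q+\bar\t_1 + \sum_\alpha\phi^\alpha\<\!\<\frac{\phi_\alpha}{1-q\bar L}\>\!\>_{0,1_1}$ is taken with ancestor bundles at base point $\tau$.

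Next I would show that $\bar{\mathcal{J}}_\tau$ is affine-linear on the locus $\bar\t_1(1)=0$: in $\<\!\<\frac{\phi_\alpha}{1-q\bar L}\>\!\>_{0,1_1}$ the only input that does not vanish at $q=1$ is the descendant insertion itself, so by Lemma 3.1 this correlator vanishes, giving $\bar{\mathcal{J}}_\tau(\bar\t_1) = 1-q+\bar\t_1$. Since $1-q$ vanishes at $q=1$ and $\{g\in\mathcal{K}_+ : g(1)=0\}=(1-q)\mathcal{K}_+$, the range of $\bar{\mathcal{J}}_\tau$ is the linear Lagrangian subspace $(1-q)\mathcal{K}_+\subset\bar{\mathcal{K}}$. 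As $\S_{1,\tau}$ is $\mathbb{Q}((q))$-linear (hence commutes with multiplication by $1-q$ and by $\frac1{1-q}$) and symplectic (Proposition 1.4, via the flatness of $G$), transporting back gives $\mathcal{L}=\cup_\tau \S_{1,\tau}^{-1}\big((1-q)\mathcal{K}_+\big)=\cup_\tau(1-q)\S_{1,\tau}^{-1}\mathcal{K}_+$. Each $(1-q)\S_{1,\tau}^{-1}\mathcal{K}_+$ is a $\Lambda$-submodule hence a cone, so $\mathcal{L}$ is a cone in the $\Lambda_+$-adic sense; its Lagrangian property follows once these subspaces are seen to fit together as the rulings of a single overruled Lagrangian, which is the content of the last step. (The dilaton equation of Section 1.2 gives the cone property directly as well, via degree-$2$ homogeneity of $\mathcal{F}_0$ in the dilaton-shifted variable, so either route may be used.)

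For the overruled property I would compute the tangent space $T$ to $\mathcal{L}$ at the point $\mathcal{J}_\tau$ corresponding to $\bar\t_1=0$. It receives two kinds of directions: the ruling directions $\S_{1,\tau}^{-1}\big((1-q)\mathcal{K}_+\big)$ from varying $\bar\t_1$ along its locus with $\tau$ fixed, and the transverse directions $\S_{1,\tau}^{-1}$ applied to the evaluation-at-$q=1$ directions from varying $\tau$ itself; together these fill out $\S_{1,\tau}^{-1}\mathcal{K}_+=T$, with $(1-q)T=\S_{1,\tau}^{-1}\big((1-q)\mathcal{K}_+\big)\subseteq T$ the ruling line through $\mathcal{J}_\tau$. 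That $T$ is this same $\S_{1,\tau}^{-1}\mathcal{K}_+$ at every point of $(1-q)T$ follows because along a ruling line only $\bar{\x}_1=1-q+\bar\t_1$ changes while $\tau$ (hence $\S_{1,\tau}$) stays fixed and the ancestor cone $(1-q)\mathcal{K}_+$ is linear with constant tangent space; the $\mathbb{Q}((q))$-linearity of $\S_{1,\tau}$ then gives the $(1-q)$-invariance of $T$. The main obstacle is making the two bookkeeping steps precise: first, that the unstable terms of Theorem 1 really assemble into the clean identity $\S_{1,\tau}\mathcal{J}=\bar{\mathcal{J}}_\tau$ — where one also invokes Lemma 2.4 ($[\S_1(1-q+\tau_1)]_+=1-q$) to match the two dilaton vectors — and second, that the fibers of $\t_1\mapsto\tau(\t_1)$ of Section 3.3 are exactly the ruling lines with $\tau$ locally constant along each; the delicacy there is that $\S_{1,\tau}$ depends on the full vector $\tau=(\tau_1,\tau_2,\dots)$ through the entries $\<\!\<\phi_\alpha,\phi_\beta\>\!\>_{0,2}$ of $G$, so the interplay of the recursion with WDVV and the string equation must be verified level by level.
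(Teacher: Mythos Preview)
Your approach is correct but takes a genuinely different route from the paper. The paper proves this lemma via the \emph{adelic characterization} of $\mathcal{J}$ (imported from \cite{perm10}): one expands $\mathcal{J}$ near each root of unity $\zeta$, observes that the condition at $\zeta=1$ places $\mathcal{J}^{(1)}$ on a known overruled cone $\mathcal{L}^{fake}$ while the conditions at $\zeta\neq 1$ are linear (membership in certain Lagrangian subspaces $T_\zeta$), and notes that both types of conditions are preserved under perturbation by $(1-q)T$; the inclusion $\mathcal{J}(\t_1)+(1-q)T\subseteq\mathcal{L}$ then follows in one line. Your route instead uses Theorem~1 together with the ancestor vanishing Lemma~3.1 to identify $\mathcal{L}=\cup_\tau(1-q)\S_{1,\tau}^{-1}\mathcal{K}_+$ directly, and then reads off the overruled structure from this decomposition and a tangent-space computation.

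What you are doing is essentially the content of the paper's \emph{subsequent} Lemmas~2 and~3 and the concluding paragraph of the Appendix, reorganized to bypass the adelic step. Both routes are valid; the paper's is shorter for this lemma in isolation because the adelic characterization does the heavy lifting externally, while yours is more self-contained within the paper and makes the ruling decomposition explicit from the outset. The bookkeeping you flag as the ``main obstacle''---matching the unstable terms of Theorem~1 to obtain the clean identity $\S_{1,\tau}\mathcal{J}=\bar{\mathcal{J}}_\tau$, and verifying that the fibers of $\t_1\mapsto\tau$ coincide with the rulings---is precisely what the paper's Lemma~2 (via the three-part decomposition of $\tilde{\mathcal{F}}_0$ into unstable and stable pieces) and the final paragraph of the Appendix carry out, so the missing details are already available there.
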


\begin{proof}
The Adelic characterization describes the big $\mathcal{J}$ function $\mathcal{J}(\t_1)$ as follows.
For each root of unity $\zeta$, let $\mathcal{J}^{(\zeta)}$ be the Laurent series expansion of $\mathcal{J}(q^{1/m}/\zeta)$ at $q=1$.
Let 
\[\Delta_\zeta = \exp \left[\sum_{k\geq 1} \left(\frac{\Psi^k(T^\ast X-1)}{k(1-\zeta^{-k}q^{k/m})} - \frac{\Psi^{km}(T^\ast_X -1)}{k(1-q^{km})}\right) \right].\]
There are points $f_m$'s for $m \geq 2$ determined by $\t_2,\cdots, \t_r,\cdots$ on a certain overruled (i.e., each of its tangent space $\mathcal{T}$ meets $\mathcal{L}$ at $(1-q)\mathcal{T}$) Lagrangian cone $\mathcal{L}^{fake}$, such that the range of the big $\mathcal{J}$ function is characterized by the following conditions.
\begin{enumerate}
\item
$\mathcal{J}^{(1)} \in \mathcal{L}^{fake}$.
\item
For each $\zeta \neq 1$, $\mathcal{J}^{(\zeta)}$ lies in the Lagrangian subspace
\[T_\zeta = \Delta_\zeta \Psi^m(T_{f_m}\mathcal{L}^{fake}) \otimes_{\Psi^m(\Lambda)}\Lambda.\]
\end{enumerate}

For a given $\t_1 \in \mathcal{K}_+$, let $T+\mathcal{J}(\t_1)$ (resp. $T_1$) be the tangent space of $\mathcal{L}$ (resp. $\mathcal{L}^{fake}$) at $\mathcal{J}(\t_1)$ (resp. $\mathcal{J}^{(1)}(\t_1)$).
Following from the fact that $\mathcal{L}^{fake}$ is an overruled cone, we know that 
\[(1-q)T_1 \subseteq \mathcal{L}^{fake}; \ \ \ (1-q)T_\zeta \subseteq T_\zeta.\]

Thus points in $(1-q)T$, and hence $\mathcal{J}(\t_1)+(1-q)T$, pass tests (1)(2) in Adelic characterization, and so $\mathcal{J}(\t_1)+(1-q)T \subseteq \mathcal{L}$ for any $\t_1 \in \mathcal{K}_+$.
\end{proof}

Let $\tilde{\mathcal{F}}_0 = \mathcal{F}_0 - \<\!\<\t_2\>\!\>_{0,1_2}$
Let 
\[\begin{array}{ll}
\tilde{\mathcal{L}} &= \{1-q+\t_1 + d_{\t_1} \tilde{\mathcal{F}}_0\};\\
\bar{\mathcal{L}} &= \{1-q+\bar{\t}_1 + d_{\bar{\t}_1} \bar{\mathcal{F}}_0\}.
\end{array}\]

Also recall that 
\[\begin{array}{ll}
\S_1(\phi)&=\dsum_{\alpha,\beta} \left((\phi,\phi_\alpha) + \<\!\<\dfrac{\phi}{1-L/q},\phi_\alpha\>\!\>_{0,2_1}\right)G^{\alpha\beta}_1\phi_\beta, \\
\S_1^{-1}(\phi) &= \phi + \dsum_{\alpha,\beta} \<\!\<\phi,\dfrac{\phi_\alpha}{1-qL}\>\!\>_{0,2_1} g^{\alpha\beta}\phi_\beta,
\end{array}\]
on $K$ and extended $\mathbb{Q}((q))$-linearly to $\mathcal{K}$.
In particular, $\S_1$ maps $(\mathcal{K}_1)_-$ to $(\bar{\mathcal{K}}_1)_-$, and $\S^{-1}_1$ maps $(\bar{\mathcal{K}}_1)_-$ to $(\mathcal{K}_1)_-$.

\begin{lemma*}
$\tilde{\mathcal{L}} = \S_1^{-1} \bar{\mathcal{L}}$.
\end{lemma*}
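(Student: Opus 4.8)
The plan is to obtain the identity as the genus-$0$ shadow of the Ancestor--Descendant correspondence together with the symplectic properties of $\S_1$. From the proof of Theorem 1 --- the $g=0$ line of the final displayed identity there, with the $\t_1$-independent term $\<\!\<\t_2\>\!\>_{0,1_2}$ removed --- one has, as a function of $\t_1$ with all other inputs held fixed,
\[
\tilde{\mathcal{F}}_0 = \bar{\mathcal{F}}_0(\bar{\t}_1) + \tfrac12\<\!\<\x_1,\x_1\>\!\>_{0,2_1},\qquad \x_1 := 1-q+\t_1,\qquad 1-q+\bar{\t}_1 = [\S_1\x_1]_+,
\]
the last relation being Lemma 2.4 together with $\bar{\t}_1=[\S_1\t_1]_+$. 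Since $\S_1$ is independent of $\t_1$, the map $\t_1\mapsto\bar{\t}_1$ is affine with linear part the block $A := [\S_1\,\cdot\,]_+\colon\mathcal{K}_+\to\bar{\mathcal{K}}_+$. I will use two facts recorded earlier: (i) $\S_1\colon(\mathcal{K},\Omega)\to(\bar{\mathcal{K}},\bar\Omega)$ is a symplectic isomorphism with $\S_1(\mathcal{K}_-)\subseteq\bar{\mathcal{K}}_-$ and $\S_1^{-1}(\bar{\mathcal{K}}_-)\subseteq\mathcal{K}_-$, so in block form $\S_1=\left(\begin{smallmatrix}A&0\\ C&D\end{smallmatrix}\right)$ with $A\colon\mathcal{K}_+\to\bar{\mathcal{K}}_+$ and $D\colon\mathcal{K}_-\to\bar{\mathcal{K}}_-$ isomorphisms; and (ii) the $\Omega$- and $\bar\Omega$-gradients $d_{\t_1}\tilde{\mathcal{F}}_0$, $d_{\bar{\t}_1}\bar{\mathcal{F}}_0$ lie in $\mathcal{K}_-$, resp.\ $\bar{\mathcal{K}}_-$ --- the relevant correlators carry a factor $\tfrac1{1-qL}$, resp.\ $\tfrac1{1-q\bar L}$, which vanishes at $q=\infty$; this is exactly what makes $\mathcal{J}(\t_1)\in 1-q+\t_1+\mathcal{K}_-$.

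Next I would match $\tilde{\mathcal{L}}$ with $\S_1^{-1}\bar{\mathcal{L}}$ pointwise along the bijection $\t_1\leftrightarrow\bar{\t}_1$ (a bijection because $A$ is an isomorphism). Fix $\t_1$ and put $P := \x_1 + d_{\t_1}\tilde{\mathcal{F}}_0\in\tilde{\mathcal{L}}$. Since $d_{\t_1}\tilde{\mathcal{F}}_0\in\mathcal{K}_-$ and $\S_1(\mathcal{K}_-)\subseteq\bar{\mathcal{K}}_-$, the positive part of $\S_1 P$ equals $[\S_1\x_1]_+ = 1-q+\bar{\t}_1$, which is precisely the dilaton-shifted input of the point $\bar P := (1-q+\bar{\t}_1)+d_{\bar{\t}_1}\bar{\mathcal{F}}_0\in\bar{\mathcal{L}}$. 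It remains to verify $[\S_1 P]_- = d_{\bar{\t}_1}\bar{\mathcal{F}}_0$. Differentiating the displayed formula for $\tilde{\mathcal{F}}_0$ in $\t_1$ gives $d_{\t_1}\tilde{\mathcal{F}}_0 = A^{*}\bigl(d_{\bar{\t}_1}\bar{\mathcal{F}}_0\bigr) + \nabla$, where $A^{*}\colon\bar{\mathcal{K}}_-\to\mathcal{K}_-$ is the transpose of $A$ for the pairings induced by $\Omega,\bar\Omega$ and $\nabla\in\mathcal{K}_-$ is the $\Omega$-gradient of $\tfrac12\<\!\<\x_1,\x_1\>\!\>_{0,2_1}$. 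The symplectic relation for $\S_1$ (restricted to the pairing between $\mathcal{K}_+$ and $\mathcal{K}_-$) forces $A^{*}=D^{-1}$; combined with the fact that $\S_1$ restricted to $\mathcal{K}_-$ is $D$, this gives $\S_1\bigl(A^{*}d_{\bar{\t}_1}\bar{\mathcal{F}}_0\bigr) = d_{\bar{\t}_1}\bar{\mathcal{F}}_0$ and $\S_1(\nabla) = D\nabla$. Hence $[\S_1 P]_- = [\S_1\x_1]_- + d_{\bar{\t}_1}\bar{\mathcal{F}}_0 + D\nabla$, and the desired identity reduces to the single relation
\[
\nabla = -D^{-1}[\S_1\x_1]_- = -\S_1^{-1}\bigl([\S_1\x_1]_-\bigr) = \S_1^{-1}(1-q+\bar{\t}_1)-\x_1.
\]

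This last relation is exactly the compatibility of $\S_1$ with the genus-$0$ two-point function. By the definition of $\S_1 = S$ in Section 1.4 the off-diagonal part of $\S_1$ is built from $\<\!\<\tfrac{\phi}{1-L/q},\phi_\alpha\>\!\>_{0,2_1}$, and dually $\S_1^{-1}(\phi) = \phi + \sum_{\alpha,\beta}\<\!\<\phi,\tfrac{\phi_\alpha}{1-qL}\>\!\>_{0,2_1}g^{\alpha\beta}\phi_\beta$; substituting these, passing between $g^{\alpha\beta}$ and $G^{\alpha\beta}$ by $G_{\alpha\beta}=(\phi_\alpha,\phi_\beta)+\<\!\<\phi_\alpha,\phi_\beta\>\!\>_{0,2}$, and expanding $\x_1 = 1-q+\t_1$ $\mathbb{Q}((q))$-linearly, the relation becomes precisely Proposition 1.4, with the string equation (Proposition 1.1) absorbing the dilaton term $1-q$. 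I expect this last bookkeeping step --- keeping straight the two polarizations, the two symplectic forms $\Omega,\bar\Omega$, the dilaton shift $1-q$, and the role of the fixed parameter $\tau$ --- to be the only genuine obstacle; everything else is formal. A shorter but less self-contained alternative would be to extract the $\hbar^{-1}$-coefficient from the quantized identity $\hat{\S}_1^{-1}\mathcal{A}(\x) = e^{\<\!\<\x_1,\x_1\>\!\>_{0,2_1}/2\hbar}\,\mathcal{A}([\S_1\x_1]_+,\x_2,\cdots)$ established in the proof of Theorem 1 (cf.\ \cite{perm7}), and invoke the general principle \cite{Givental01} that the semiclassical limit of a quantized symplectic transformation acts on the graph of the genus-$0$ differential by the underlying linear map; I would record this as a remark. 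Finally, since $A$ --- and hence the parametrization $\t_1\leftrightarrow\bar{\t}_1$ --- is a bijection, the pointwise matching upgrades to the set equality $\tilde{\mathcal{L}}=\S_1^{-1}\bar{\mathcal{L}}$.
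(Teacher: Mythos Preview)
Your proposal is correct and uses the same ingredients as the paper: the genus-$0$ relation $\tilde{\mathcal F}_0=\bar{\mathcal F}_0(\bar{\t}_1)+\tfrac12\<\!\<\x_1,\x_1\>\!\>_{0,2_1}$ coming from Theorem~1, the symplectic property of $\S_1$, and the WDVV identity (Proposition~1.4) to control the two-point term. The difference is organizational. The paper splits $\tilde{\mathcal F}_0$ into its stable and unstable parts and then matches three pairs of terms directly by pairing both sides against an arbitrary $v\in\mathcal K_+$, computing residues explicitly; your argument instead writes $\S_1$ in block form $\left(\begin{smallmatrix}A&0\\ C&D\end{smallmatrix}\right)$, uses the symplectic relation $A^\ast=D^{-1}$ to transport $d_{\bar\t_1}\bar{\mathcal F}_0$ through $\S_1$, and reduces everything to the single identity $\nabla=-D^{-1}[\S_1\x_1]_-$. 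Your route is more conceptual and makes transparent why the quantization formula $\hat\S_1^{-1}\mathcal A(\x)=e^{\<\!\<\x_1,\x_1\>\!\>/2\hbar}\mathcal A([\S_1\x]_+)$ has exactly the shape it does; the paper's route is more self-contained, since it unpacks the WDVV step (your ``last bookkeeping step'') completely via the residue formula, and it is explicit about the role of the shift by $\tau_1$ (your $\x_1$ should be $1-q+\t_1+\tau_1$, not $1-q+\t_1$, to match the paper's parametrization; this is exactly the point you flag as a bookkeeping hazard). Either way the content is the same: the final reduction unpacks to Proposition~1.4 plus Lemma~2.4.
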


\begin{proof}
We show that when $\bar{\t}_1= [\S_1\t_1]_+$,
\[1-q+\t_1 + \tau_1 + d_{\t_1} \tilde{\mathcal{F}}_0(\t_1+\tau_1) = \S_1^{-1}(1-q+\bar{\t}_1 + d_{\bar{\t}_1} \bar{\mathcal{F}}_0(\bar{\t}_1)).\]

We decompose $\tilde{\mathcal{F}}_0$ into two parts, the $unstable$ part (Lemma 2.5)
\[\tilde{\mathcal{F}}^u_0(\t_1,\tau_1) = \frac12\<\!\<\mathbf{t}_1-L+1+\tau_1,\mathbf{t}_1-L+1+\tau_1\>\!\>_{0,2_1}\]
and the $stable$ part 
\[\tilde{\mathcal{F}}^s_0(\t_1,\tau_1) = \tilde{\mathcal{F}}_0(\t_1+\tau_1) - \tilde{\mathcal{F}}^u_0(\t_1,\tau_1).\]

Thus, the left and right-hand sides decompose into $3$ parts, and we identify these parts.
\[\begin{array}{llll}
LHS &= \left[1-q+\t_1 + \tau_1\right] &+ \left[d_{\t_1} \tilde{\mathcal{F}}^u_0(\t_1,\tau_1)\right] &+ \left[d_{\t_1} \tilde{\mathcal{F}}^s_0(\t_1,\tau_1)\right]; \\
RHS & = \left[[\S_1^{-1}(1-q+\bar{\t}_1)]_+\right] & + \left[[\S_1^{-1}(1-q+\bar{\t}_1)]_-\right] & + \left[d_{\bar{\t}_1} \bar{\mathcal{F}}_0(\bar{\t}_1))\right].
\end{array}\]

\begin{enumerate}
\item
Since $\S_1, \S_1^{-1}$ maps $\mathcal{K}_-$ to $\mathcal{K}_-$, we have $[\S_1^{-1}[\S_1 \bullet ]_+]_+ = \bullet$.
It follows from Lemma 2.4 and $\bar{\t}_1= [\S_1\t_1]_+$ that 
\[[\S_1 (1-q+\t_1 + \tau_1)]_+ = 1-q+\bar{\t}_1.\]
Applying $[\S_1^{-1} \bullet]_+$ to both sides identifies the first term.

\item
We identify the second and third terms by identifying their pairing with arbitrary $v \in \mathcal{K}_+$.

For the second term, use $u(q)$ for $\mathbf{t}_1(q)-q+1+\tau_1$.
Then $1-q+\bar{\t}_1=[\S_1 u]_+$, and the pairings $\Omega_1(\bullet,v)$ are respectively $\<\!\<u(L),v(L)\>\!\>_{0,2_1}$ and $\Omega_1(\S_1^{-1}[\S_1 u]_+, v) = \bar{\Omega}_1([\S_1 u]_+,\S_1 v)$.
By Cauchy's residue formula
\[[f(q)]_+ = -(Res_{w=0}+Res_{w=\infty}) \frac{f(w)dw}{w-q},\]
we have 
\[\<\!\<u,v\>\!\>_{0,2_1} = Res_{x=0,\infty}Res_{y=0,\infty} \<\!\<\frac{u(x)}{1-L/x}, \frac{v(y)}{1-L/y}\>\!\>_{0,2_1} \frac{dx}x \frac{dy}y.\]
Here we shorthand $Res_{q=0}+Res_{q=\infty}$ as $Res_{q=0,\infty}$.
By the WDVV equation, the double-bracket correlator is
\[\frac1{1-x^{-1}y^{-1}} \left(-(u(x),v(y)) + G(\S_1(x)u(x),\S_1(y)v(y))\right),\]
where $G$ is the metric $G_{1,\alpha\beta} \phi^\alpha \otimes \phi^\beta$.
By using Cauchy's residue formula, we simplify
\[Res_{x=0,\infty}Res_{y=0,\infty}\frac1{1-x^{-1}y^{-1}} G(\S_1(x)u(x),\S_1(y)v(y)) \frac{dx}x \frac{dy}y\] 
as $\bar{\Omega}_1([\S_1 u]_+, \S_1 v)$.
Likewise 
\[Res_{x=0,\infty}Res_{y=0,\infty}\frac1{1-x^{-1}y^{-1}} (u(x),v(y)) \frac{dx}x \frac{dy}y = \Omega_1([u]_+,v) = 0,\]
and thus identifying the second term.

\item
Use $\mathfrak{L}$ for Lie derivative, and by the fact that $\S_1$ is symplectic, we have the pairings $\Omega_1(\bullet,v)$ of the third terms as $\mathfrak{L}_v(\tilde{\mathcal{F}}^s_0)(\t_1)$ and $\Omega_1(d_{\bar{\t}_1}\bar{\mathcal{F}}^s_0,[\S_1v]_+) = \mathfrak{L}_{[\S_1 v]_+}(\bar{\mathcal{F}}_0)(\bar{\t}_1)$.
Note that the stable part $\tilde{\mathcal{F}}^s_0$ is the contribution from curves on which there are at least $3$ marked points with input $\t$'s, so $\mathfrak{L}_v(\tilde{\mathcal{F}}^s_0)(\t_1)= \mathfrak{L}_{[\S_1 v]_+}(\bar{\mathcal{F}}_0)(\bar{\t}_1)$ by Proposition 2.1.
\end{enumerate}
\end{proof}

Note that in the lemma, $\t$ and $\tau$ are not related, so $\<\!\<\t_2\>\!\>_{0,1_2}$ does not depend on $\t_1$.
Thus $\mathcal{L} = \tilde{\mathcal{L}} = \S_1^{-1}\bar{\mathcal{L}}$.

\begin{lemma*}
The tangent space of $\mathcal{L}$ at $\mathcal{J}(\tau_1)$ is $\S_{1,\tau}^{-1} \mathcal{K}_+$
\end{lemma*}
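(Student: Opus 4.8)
The plan is to transport the structure of the $g=0$ ancestor cone $\bar{\mathcal{L}}$ through the symplectomorphism $\S_{1,\tau}^{-1}$, using the identity $\mathcal{L}=\S_{1,\tau}^{-1}\bar{\mathcal{L}}$ recorded just before this lemma. Since $\S_{1,\tau}^{-1}\colon\bar{\mathcal{K}}_1\to\mathcal{K}_1$ is a $\mathbb{Q}((q))$-linear symplectomorphism, it carries each point of $\bar{\mathcal{L}}$, together with its tangent space, to the corresponding point of $\mathcal{L}$ with its tangent space. So it is enough to (i) show $1-q\in\bar{\mathcal{L}}$ with $T_{1-q}\bar{\mathcal{L}}=\mathcal{K}_+$, and (ii) identify $\S_{1,\tau}^{-1}(1-q)$ with $\mathcal{J}(\tau_1)$.

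For step (i), recall $\bar{\mathcal{L}}=\{1-q+\bar{\t}_1+d_{\bar{\t}_1}\bar{\mathcal{F}}_0\}$. The fixed higher-level inputs $\bar{\t}_2,\bar{\t}_3,\dots$ all vanish at $1$ (as in Sections~3--4), so Lemma~3.1 applies to every $g=0$ ancestor correlator occurring in the $1$-jet and the $2$-jet of $\bar{\mathcal{F}}_0$ in the variable $\bar{\t}_1$ at $\bar{\t}_1=0$: in such a correlator all inputs except the one or two $\bar{\t}_1$-insertions vanish at $1$ (equivalently, those correlators are supported on $\bar{\mathcal{M}}_{0,n}$ with $n\le 2$, or are killed by the dimension count of Lemma~3.1). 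Hence $d_0\bar{\mathcal{F}}_0=0$ and $d^2_0\bar{\mathcal{F}}_0=0$, so the parametrization $\bar{\t}_1\mapsto 1-q+\bar{\t}_1+d_{\bar{\t}_1}\bar{\mathcal{F}}_0$ sends $0$ to $1-q$ and has derivative $\mathrm{id}_{\mathcal{K}_+}$ at $\bar{\t}_1=0$; therefore $1-q\in\bar{\mathcal{L}}$ and $T_{1-q}\bar{\mathcal{L}}=\mathcal{K}_+$. (Since $\bar{\mathcal{L}}=\S_{1,\tau}\mathcal{L}$ and $\mathcal{L}$ is overruled by the first lemma of the appendix, $\bar{\mathcal{L}}$ is overruled too, so $\mathcal{K}_+$ is the tangent space shared along the ruling $(1-q)\mathcal{K}_+$.)

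For step (ii), write $\S_{1,\tau}^{-1}(1-q)=(1-q)\,\S_{1,\tau}^{-1}(1)$ and use $\S_1^{-1}(1)=1+\sum_{\alpha,\beta}\<\!\<1,\frac{\phi_\alpha}{1-qL}\>\!\>_{0,2_1}g^{\alpha\beta}\phi_\beta$. Applying the string equation (Proposition~1.1) to $\<\!\<1,\frac{\phi_\alpha}{1-qL}\>\!\>_{0,2_1}$, the degree-$0$ three-point term contributes exactly the shift $\tau_1$ (via $\langle1,\phi_\alpha,\tau_1\rangle_{0,3,0}=(\phi_\alpha,\tau_1)$), while the remaining terms reassemble, after multiplication by $1-q$, into $\sum_\alpha\phi^\alpha\<\!\<\frac{\phi_\alpha}{1-qL}\>\!\>_{0,1_1}$. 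This is the standard identity relating the $\mathcal{J}$-function to the fundamental solution, and it gives $\S_{1,\tau}^{-1}(1-q)=1-q+\tau_1+\sum_\alpha\phi^\alpha\<\!\<\frac{\phi_\alpha}{1-qL}\>\!\>_{0,1_1}=\mathcal{J}(\tau_1)$. Combining (i) and (ii), $T_{\mathcal{J}(\tau_1)}\mathcal{L}=\S_{1,\tau}^{-1}\big(T_{1-q}\bar{\mathcal{L}}\big)=\S_{1,\tau}^{-1}\mathcal{K}_+$, as claimed; with the first lemma of the appendix this completes Theorem~4, the rulings $(1-q)\S_{1,\tau}^{-1}\mathcal{K}_+$ sweeping out $\mathcal{L}$ as $\tau$ varies.

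The step I expect to be the main obstacle is (i): matching the hypotheses of Lemma~3.1 to precisely the correlators appearing in the $1$- and $2$-jets of $\bar{\mathcal{F}}_0$, and thereby confirming that $\bar{\mathcal{L}}$ is a genuine overruled cone through the dilaton-shifted vertex $1-q$ with constant tangent $\mathcal{K}_+$ there. Step (ii) is a routine string-equation bookkeeping, and the passage from (i)--(ii) to the conclusion is immediate once the symplectic map $\S_{1,\tau}^{-1}$ is in hand.
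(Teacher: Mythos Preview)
Your outline matches the paper's proof closely: both transport $T_{1-q}\bar{\mathcal{L}}=\mathcal{K}_+$ through $\S_{1,\tau}^{-1}$ and identify the relevant point. The only real difference is in how the identification $\S_{1,\tau}\mathcal{J}(\tau_1)=1-q$ is obtained. The paper uses Lemma~2.4 to get $[\S_{1,\tau}(1-q+\tau_1)]_+=1-q$, hence $[\S_{1,\tau}\mathcal{J}(\tau_1)]_+=1-q$ (since $\S_{1,\tau}$ preserves $\mathcal{K}_-$), and then combines this with the vanishing at $\bar{\t}_1=0$ to conclude $\S_{1,\tau}\mathcal{J}(\tau_1)=1-q$; you instead compute $\S_{1,\tau}^{-1}(1-q)$ directly via the string equation. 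Both routes are valid, but the paper's is cleaner because Lemma~2.4 is already in hand, whereas your string-equation bookkeeping has to track the unstable three-point term carefully and also requires the identification of $\mathcal{J}(\tau_1)$ with $1-q+\tau_1+\sum_\alpha\phi^\alpha\langle\!\langle\frac{\phi_\alpha}{1-qL}\rangle\!\rangle_{0,1_1}$, which is not literally the definition when the parameters $\t_r$ ($r\ge 2$) differ from $\tau_r$.

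One caution on your step~(i): you invoke Lemma~3.1 by asserting that $\bar{\t}_r(1)=0$ for $r\ge 2$ ``as in Sections~3--4''. In the appendix, however, the paper explicitly notes that $\t$ and $\tau$ are unrelated, so this hypothesis is not automatic. The paper itself asserts the same vanishing without comment (and its citation of ``Lemma~1'' for the tangent-space step appears to be a slip), so this is a shared soft spot rather than a divergence, but you should be aware that your justification leans on a hypothesis that the appendix does not obviously supply.
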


\begin{proof}
By Lemma 2, the tangent space of $\mathcal{L}$ at $\mathcal{J}(\tau_1)$ is the image under $\S_1^{-1}$ of the tangent space of $\bar{\mathcal{L}}_\tau$ at $\S_1(\mathcal{J}(\tau_1))$.
By Lemma 2.4 we have $[\S_{1,\tau}(1-q+\tau_1)]_+=1-q$, so $[\S_{1,\tau}\mathcal{J}(\tau_1)]_+ = 1-q$.
The point on $\bar{\mathcal{L}}_\tau$ with $\bar{\t}_1=0$ is 
\[1-q+\bar{\t}_1(q)+\sum_{\ell,\alpha,\beta} \frac{\phi_\beta G^{\alpha\beta}_1}{\ell!} \<\!\<\frac{\phi_\alpha}{1-q\bar{L}},\bar{\t}_1(\bar{L}),\cdots,\bar{\t}_1(\bar{L});\cdots,\bar{\t}_r(\bar{L}),\cdots\>\!\>_{0,1_1+\ell} = 1-q,\]
thus $\S_{1,\tau}\mathcal{J}(\tau_1) = 1-q$.
By Lemma 1, the the tangent space of $\bar{\mathcal{L}}_\tau$ at this point consist of vectors
\[v(q)+\sum_{\ell,\alpha,\beta} \frac{\phi_\beta G^{\alpha\beta}_1}{\ell!} \<\!\<\frac{\phi_\alpha}{1-q\bar{L}}, v(\bar{L}),\bar{\t}_1(\bar{L}),\cdots,\bar{\t}_1(\bar{L});\cdots,\bar{\t}_r(\bar{L}),\cdots\>\!\>_{0,2_1+\ell} = v(q),\]
i.e., the tangent space $T_{1-q}\bar{\mathcal{L}}_\tau = \mathcal{K}_+$.
Therefore $T_{\mathcal{J}(\tau_1)}\mathcal{L}=\S_{1,\tau}^{-1}\mathcal{K}_+$.
\end{proof}

Since $\S_1$ maps $(\mathcal{K}_1)_-$ to $(\bar{\mathcal{K}}_1)_-$, and $\S^{-1}_1$ maps $(\bar{\mathcal{K}}_1)_-$ to $(\mathcal{K}_1)_-$, for any $\t_1 \in \mathcal{K}_+$, we have
\[[\S_{1,\tau}[\S_{1,\tau}^{-1}(\t_1)]_+]_+ = \t_1.\]
Thus $1-q+\t_1 \in [(1-q)\S_{1,\tau}^{-1}\mathcal{K}_+]_+$ if and only if $[\S_{1,\tau}\t_1]_+(1)=0$.
As a result, ruling spaces $(1-q)\S_{1,\tau}^{-1}\mathcal{K}_+$ project to a partition of $\mathcal{K}_+$.
Thus following Lemma 1, $(1-q)\S_{1,\tau}^{-1}\mathcal{K}_+$ parametrizes the whole of $\mathcal{L}$.
In particular, each $(1-q)\S_{1,\tau}^{-1}\mathcal{K}_+$ passes through the origin, so $\mathcal{L}$ is a cone.

Moreover, $\mathcal{K}_+$, hence $\S_{1,\tau}^{-1}\mathcal{K}_+$, is Lagrangian.
So $\mathcal{L}$ is a Lagrangian cone.

\author{Dun Tang, Department of Mathematics, University of California, Berkeley, 
1006 Evans Hall, University Drive, Berkeley CA94720.
E-mail address: dun\_tang@math.berkeley.edu}

\end{document}